\documentclass[
final
]{dmtcs-episciences}


\usepackage[figurename=Fig.]{caption}

\usepackage{amsmath,amssymb}
\usepackage{amsthm}

\newtheorem{theorem}{Theorem}
\newtheorem{lemma}[theorem]{Lemma}
\newtheorem{corollary}[theorem]{Corollary}
\newtheorem{proposition}[theorem]{Proposition}

\usepackage[utf8]{inputenc}
\usepackage{subfigure}

%
\usepackage{graphicx}


\author{Kengo Enami\affiliationmark{1}\thanks{This work was supported by JSPS KAKENHI Grant Number JP19J13359.}}
\title[Embeddings of 3-connected 3-regular planar graphs]{Embeddings of 3-connected 3-regular planar graphs on surfaces of non-negative Euler characteristic}
\affiliation{
Graduate School of Environment and Information Sciences, Yokohama National University, Yokohama , Japan}
\keywords{inequivalent embeddings, flexibility, genus distribution, planar, cubic}

\received{2018-7-3}
\revised{2019-5-16, 2019-9-6}
\accepted{2019-9-6}
\begin{document}
\publicationdetails{21}{2019}{4}{11}{4656}
\maketitle
\begin{abstract}
  Whitney's theorem states that every 3-connected planar graph is uniquely embeddable on the sphere.
On the other hand, it has many inequivalent embeddings on another surface.
We shall characterize structures of a $3$-connected $3$-regular planar graph $G$ embedded on the projective-plane, the torus and the Klein bottle,
and give a one-to-one correspondence between inequivalent embeddings of $G$ on each surface and some subgraphs of the dual of $G$ embedded on the sphere.
These results enable us to
give explicit bounds for the number of inequivalent embeddings of $G$ on each surface,
and propose effective algorithms for enumerating and counting these embeddings.
\end{abstract}

%
%

\section{Introduction}
\label{sec:in}

An \emph{embedding} of a graph $G$ on a surface $F^2$,
which is a compact 2-dimensional manifold without boundary,
is a drawing of $G$ on $F^2$ without edge crossing.
Technically, we regard an embedding as an injective continuous map $f:G\to F^2$,
while we often consider that
$G$ is already mapped on a surface and denote its image by $G$ itself.
The \emph{faces} are the connected components of the open set $F^2-f(G)$.
In this paper, we focus on only finite, undirected and simple graphs.
Moreover, we assume that embeddings are \emph{cellular},
that is, each face must be homeomorphic to an open 2-cell,
which contains neither handles nor crosscaps.
For terminologies of topological graph theory, we refer to \cite{tex1,tex2}.

Two embeddings $f_1,f_2:G\to F^2$ are \emph{equivalent}
if there is a homeomorphism $h:F^2\to F^2$ such that $hf_1=f_2$.
We say that a graph $G$ is \emph{uniquely embeddable} on $F^2$ (up to equivalence)
if any two embeddings of $G$ on $F^2$ are equivalent.
The following two questions are important and have attracted many topological graph theorists:
(1) What kind of structures generates inequivalent embeddings of a given graph?
(We often call such a structure the \emph{re-embedding structure}.)
(2) How many inequivalent embeddings on a fixed surface does a graph have?

These problems were first studied by
Whitney \cite{Whitney1,Whitney2}.
He proved that
one of any two embeddings of a 2-connected planar graph on the sphere can be obtained from the other by a sequence of simple local re-embeddings, called Whitney's 2-flipping
(see \cite[Sections~2 and 5]{tex2} for details), and
every 3-connected planar graph is uniquely embeddable on the sphere.

With regard to the uniqueness of a graph embedded on a non-spherical surface,
its ``face-width" plays an effective role.
The \emph{face-width} $\textbf{fw}(G)$ of a graph $G$ embedded on a non-spherical surface $F^2$ is defined by $$\textbf{fw}(G)=\min \{ |G\cap \ell | : \ell \mbox{ is a noncontractible simple closed curve on } F^2 \} .$$
In \cite{Mohar4,rep,Seymour},
it was proved that a $3$-connected graph embedded on a non-spherical surface with sufficiently high face-width is uniquely embeddable on this surface.
See some other studies
\cite{Mohar1,Robertson2}
for relations between
the number of inequivalent embeddings of a graph on a fixed surface and
its face-width.

Not only the face-width
but also the connectivity
has a strong relation to the number of inequivalent embeddings on surfaces with lower genera,
which can be expected from Whitney's theorem.
Negami \cite{Negami2} proved that every $6$-connected toroidal graph except for three graphs
is uniquely embeddable on the torus.
Kitakubo and Negami \cite{Negami1}, and
Suzuki \cite{Suzuki} studied the number of inequivalent embeddings of $5$-connected and $4$-connected non-planar graphs on the projective-plane, respectively.
Recently, Maharry et al. \cite{Robertson1} constructed re-embedding structures of non-planar graphs on the projective-plane completely and pointed out some mistakes in the past studies.
In these papers, they analysed re-embedding structures of ``non-planar" graphs.

On the other hand,
Mohar et al. \cite{Mohar2,Mohar3}
showed that 2-connected ``planar" graphs embedded on non-spherical surfaces have special re-embedding structures,
called ``patch structures",
while they have not given specific structures on each surface except for the projective-plane
and not mentioned the number of inequivalent embeddings on any surface. 

In Section \ref{sec2}, we shall construct the complete list of re-embedding structures of a planar graph $G$ embedded on the projective-plane, the torus or the Klein bottle
when $G$ is $3$-connected and 3-regular.
In this argument, $3$-connectivity is essential
in order for us to ignore Whitney's $2$-flippings, and when a $3$-connected planar graph $G$ is $3$-regular, we can describe re-embedding structures of $G$ on each surface completely.
These re-embedding structures lead to the following results.

We denote the complete graph of order $n$ by $K_n$
and for a positive integer $k\geq 2$,
a complete $k$-partite graph with $k$ partite sets $V_1,V_2,\cdots , V_k$ such that $|V_i|=n_i$ for $1\leq i\leq k$ by $K_{n_1,n_2,\ldots ,n_k}$.
Note that a complete $k$-partite graph $K_{1,1,\ldots ,1}$ is isomorphic to the complete graph $K_k$ of order $k$.

\begin{theorem}\label{th1}
There exists a one-to-one correspondence between inequivalent embeddings of a $3$-connected $3$-regular planar graph on the projective-plane and subgraphs of the dual graph of the graph embedded on the sphere isomorphic to $K_2$ or $K_4$.
\end{theorem}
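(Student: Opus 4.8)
The plan is to build the correspondence explicitly in both directions and then check that the two constructions are mutually inverse, organizing the whole argument around a single noncontractible curve that realizes the face-width. Throughout I fix the unique spherical embedding of $G$ supplied by Whitney's theorem, so that its dual $G^*$ is a well-defined simple $3$-connected triangulation of the sphere (each face of $G^*$ is a triangle because $G$ is $3$-regular, and $G^*$ is simple and $3$-connected because $G$ is). Under this duality the vertices of $G^*$ are the faces of $G$, so a subgraph of $G^*$ isomorphic to $K_2$ is an edge $e$ of $G$ together with its two incident faces $f_1,f_2$, while a subgraph isomorphic to $K_4$ is a quadruple of pairwise adjacent faces; I will check that such a quadruple is realized by a triangular face $w$ of $G$ together with the three faces surrounding it.

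First I would start from an arbitrary cellular embedding $f$ of $G$ on the projective-plane and choose a noncontractible simple closed curve $\ell$ with $|G\cap \ell|=\textbf{fw}(G)=:k$. Using that $G$ is $3$-regular, a small perturbation lets me assume $\ell$ avoids the vertices and crosses edges transversally, so $\ell$ is divided by $G$ into $k$ arcs, each lying in a single face, meeting a cyclic sequence of edges $e_1,\dots,e_k$ and faces $g_1,\dots,g_k$ with $e_i$ separating $g_{i-1}$ from $g_i$. Cutting the projective-plane along $\ell$ produces a closed disk carrying a \emph{planar} embedding of $G$, which by the uniqueness of the spherical embedding must coincide with the spherical one; hence the crossing sequence $(g_1,e_1,\dots,g_k,e_k)$ can be read directly off $G^*$ on the sphere. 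I then record exactly which faces of $G$ are modified by the re-embedding and show that they span a subgraph of $G^*$ isomorphic to $K_2$ or $K_4$.

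The main obstacle is proving that $\textbf{fw}(G)\in\{1,3\}$ and that these are the only possibilities, and this is where $3$-regularity and $3$-connectivity are used decisively. If $k=1$, the single crossed edge $e$ has the same face on both sides in $f$, while it separates two distinct faces $f_1,f_2$ on the sphere; hence $e$ is one-sided, $f_1$ and $f_2$ merge into one face, and the modified faces are $\{f_1,f_2\}$, giving the edge $f_1f_2$, a $K_2$. The case $k=2$ must be excluded, since two crossings would force two faces of $G$ to share two distinct edges, i.e. a double edge of $G^*$, contradicting simplicity of $G^*$. For $k=3$ the traversed faces $g_1,g_2,g_3$ are pairwise adjacent and form a triangle of $G^*$; such a triangle either bounds a face of $G^*$, in which case $\ell$ merely encircles a single vertex of $G$ and is contractible (so this subcase is discarded), or it is separating, and then the minimality of the face-width together with the Euler-characteristic constraint (the projective embedding has exactly one fewer face than the spherical one) forces the enclosed region to be a single degree-$3$ vertex $w$ of $G^*$, equivalently a triangular face of $G$, so the modified faces are $\{g_1,g_2,g_3,w\}$, a $K_4$. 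Finally I would rule out $k\ge 4$ by shortening any such curve across a face, contradicting minimality; this shortening step, together with the enclosed-region analysis for $k=3$, is the technical heart of the argument.

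For the reverse direction, given a $K_2$, that is an edge $e$ with faces $f_1,f_2$, I insert a crosscap so that $e$ becomes one-sided and $f_1,f_2$ coalesce; given a $K_4$, that is a triangular face $w$ surrounded by $g_1,g_2,g_3$, I reroute through a crosscap so that the three edges separating the $g_i$ become a noncontractible triangle. In each case Euler's formula confirms that the result is a cellular projective embedding with exactly one fewer face than the spherical one, matching $\chi=1$. It then remains to verify that the two assignments are mutually inverse up to the equivalence of embeddings: the forward map returns the subgraph one started from because the set of modified faces is an invariant of the embedding, and the reverse map reconstructs the embedding from the cut-open disk, which is determined up to homeomorphism by the spherical embedding together with the chosen subgraph. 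Here one must also check that equivalent embeddings yield the same subgraph and distinct subgraphs yield inequivalent embeddings, paying attention to the highly symmetric configurations; granting this, injectivity and surjectivity follow from the case analysis above, establishing the claimed one-to-one correspondence.
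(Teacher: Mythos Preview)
Your approach via face-width is genuinely different from the paper's, but it has a concrete gap in the $K_4$ case. You assert that every $K_4$ subgraph of $G^*$ is ``a triangular face $w$ of $G$ together with the three faces surrounding it'', i.e.\ a degree-$3$ vertex of $G^*$ and its three neighbours. This is false. Take $G^*$ to be the tetrahedron on $\{1,2,3,4\}$ with an extra vertex $5$ placed inside the face $123$ and another vertex $6$ placed inside the face $124$. Then $\{1,2,3,4\}$ still spans a $K_4$ in $G^*$, but the degrees of $1,2,3,4$ in $G^*$ are $5,5,4,4$; none is a degree-$3$ vertex, and none corresponds to a triangular face of $G$. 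Your reverse construction therefore misses this $K_4$, and your forward analysis at $k=3$ --- which purports to force the ``enclosed region'' to be a single degree-$3$ vertex $w$ --- cannot succeed for the projective embedding that the paper associates with this $K_4$ (namely the one obtained by twisting the six dual edges). The Euler/minimality argument you sketch does not pin down $w$: what actually sits on the far side of the crosscap can be an arbitrary planar patch, not a single triangle. A second, smaller issue is the sentence ``cutting the projective-plane along $\ell$ produces a closed disk carrying a planar embedding of $G$'': since $\ell$ meets $G$ transversally in edges, the cut splits those edges, and what lives on the disk is not $G$ but $G$ with $k$ edges severed; the appeal to Whitney uniqueness needs a more careful reformulation.

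For comparison, the paper avoids face-width entirely. Because $G$ is $3$-regular, every embedding of $G$ on any surface has the same local rotation $\rho$ as the spherical one, so an embedding is determined by a signature, equivalently by the set $X\subseteq E(G)$ of twisted edges; distinct $X$ give inequivalent embeddings. One then lets $H_X\subseteq G^*$ be the subgraph induced by the duals of the edges in $X$ and shows (Lemma~\ref{lempro}) that $F_X^2$ is the projective plane iff $H_X\cong K_2$ or $K_4$, using only that any two non-separating simple closed curves on $\mathbb{RP}^2$ must cross, together with planarity of $H_X$ and the parity condition of Lemma~\ref{lem2}. This yields the bijection $X\leftrightarrow H_X$ directly, and it handles \emph{all} $K_4$ subgraphs of $G^*$, not just those containing a degree-$3$ vertex.
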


\begin{theorem}\label{th0}
There exists a one-to-one correspondence between inequivalent embeddings of a $3$-connected $3$-regular planar graph on the torus and subgraphs of the dual graph of the graph embedded on the sphere isomorphic to $K_{2,2,2}$, $K_{2,2m}$ or $K_{1,1,2m-1}$ for some positive integer $m$.
\end{theorem}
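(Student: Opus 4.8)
The plan is to reduce the enumeration of toroidal embeddings to a rigid combinatorial datum read off from the spherical embedding. Since $G$ is $3$-connected and planar, Whitney's theorem gives a single reference embedding on the sphere, and $3$-connectivity lets me ignore Whitney $2$-flippings entirely; because $G$ is moreover $3$-regular, its spherical dual $G^*$ is a $3$-connected planar triangulation, so faces of $G$ correspond bijectively to vertices of $G^*$ and the local picture around every vertex of $G$ is completely rigid. The torus is the sphere with one handle, so I would first prove (this is the real content, the analogue of the re-embedding analysis behind Theorem~\ref{th1}) that every cellular embedding of $G$ on the torus is obtained from the spherical embedding by a single \emph{cylindrical re-embedding}: there is a closed annular region $R$ of the sphere, bounded by two cycles of $G$ and consisting of a cyclic band of faces, such that the toroidal embedding agrees with the spherical one outside $R$ and reattaches $R$ through the handle. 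The Euler-characteristic count ($G$ has $F=2+V/2$ faces on the sphere and $F=V/2$ faces on the torus) confirms that exactly two faces are consumed by this reattachment, which I would identify with the two faces capping the band.

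Next I would translate the cylindrical datum into $G^*$. The two capping faces become two distinguished dual vertices $x_1,x_2$, and the band of faces around the equator becomes dual vertices each joined to both caps; since each band face meets both caps along an edge of $G$, the recorded incidence pattern is exactly a complete bipartite join between $\{x_1,x_2\}$ and the band. When the two caps are disjoint this is a $K_{2,n}$ in $G^*$; when the two caps share an edge, the degenerate position in which the handle attaches across a single edge, that shared edge forces the extra adjacency $x_1x_2$ and produces a $K_{1,1,n}$; and the small, fully symmetric configuration in which band and caps coincide under an antipodal symmetry yields the sporadic octahedron $K_{2,2,2}$, which I would isolate separately since it contains a triangle and has no universal vertex, hence belongs to neither family. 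Three-regularity is what guarantees that the reattachment is rigid and that no further dual edges are forced, so the datum is faithfully recorded by an abstract (not necessarily induced) subgraph of $G^*$ of one of these three isomorphism types.

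The torus-specific step is orientability, and this is where the parities in the statement come from. Reattaching the band as an annulus can be done with or without a half-twist, and only one choice yields an orientable surface; tracking the induced orientation around the band shows that the disjoint-cap case is orientable exactly when the band has even length $2m$, giving $K_{2,2m}$, whereas the adjacent-cap case carries a built-in half-twist from the shared edge and is orientable exactly when the band has odd length $2m-1$, giving $K_{1,1,2m-1}$; the opposite parities are precisely the Klein-bottle configurations treated in the companion case. I would then verify the three properties of the claimed correspondence: it is well defined (equivalent toroidal embeddings, related by a self-homeomorphism of the torus, restrict to an isomorphism of the recorded dual subgraph, using uniqueness of the spherical embedding to rigidify the complement of $R$), injective (distinct cylindrical data give inequivalent embeddings, again by Whitney rigidity), and surjective (every subgraph of $G^*$ of one of the three types reverses the construction to a genuine cellular toroidal embedding).

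The main obstacle is the structural classification underlying the first paragraph: proving that \emph{every} toroidal embedding is cylindrical of this form, with no other re-embedding structures occurring. Concretely I expect the hard work to be bounding the face-width of a planar graph on the torus (it must be small) and showing that a noncontractible curve of minimum length meets $G$ in a single band, then ruling out the configurations that would disconnect $G$, violate $3$-regularity, or fail to close up cellularly. A secondary difficulty is making the equivalence bookkeeping exact: I must confirm that homeomorphism-equivalence on the torus matches abstract isomorphism of the dual subgraph with no residual symmetry overcounting, and in particular handle the boundaries between the families and the sporadic $K_{2,2,2}$ so that no embedding is counted twice or missed.
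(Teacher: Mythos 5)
Your proposal has a genuine gap at its core: the entire argument rests on the unproven structural claim that every cellular toroidal embedding of $G$ is a single ``cylindrical re-embedding'' of the spherical one, i.e.\ agrees with the sphere embedding outside one annular band of faces reattached through the handle. You identify this yourself as ``the real content'' and ``the main obstacle,'' but you give no proof of it, and as stated it cannot be literally correct: the sporadic $K_{2,2,2}$ configuration corresponds to \emph{three} pairwise-crossing homotopy classes of non-separating curves on the torus (the partite sets of the octahedron), not to one band with two caps, and your attempt to absorb it as a ``fully symmetric'' degenerate band is not a proof. A classification theorem that must be repaired to admit an exception it was supposed to rule out is not yet a classification theorem. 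The orientability/parity step is likewise heuristic (``with or without a half-twist''); the correct statement is that every vertex of the dual subgraph must have even degree, which is Lemma~\ref{lem2} in the paper and needs the edge-cut/handshaking argument, not a picture of twisting a band.

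The paper avoids your obstacle entirely by a different reduction. Because $G$ is $3$-regular, the spherical rotation system can be reused for \emph{every} embedding of $G$ on every surface, so each embedding is $f_X(G)$ for a unique set $X$ of twisted edges; the argument around Fig.~\ref{arounde} shows distinct $X$ give inequivalent embeddings, which is the bijection you try to obtain from ``Whitney rigidity.'' The classification then follows from Lemma~\ref{lem1} (a facial cycle containing a twisted edge becomes non-separating) together with the fact that two such cycles cross at most once, hence on the torus they cross if and only if they are non-homotopic: this forces $H_X$ to be a planar complete multipartite graph with all degrees even, leaving exactly $K_{2,2,2}$, $K_{2,2m}$ and $K_{1,1,2m-1}$ (Lemma~\ref{lemtorus}). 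If you want to salvage your approach, you would need to prove your band decomposition from scratch (e.g.\ via the face-width bound for planar graphs on nonplanar surfaces), and separately handle the octahedral case; the twisted-edge framework is both shorter and what makes the one-to-one correspondence with subgraphs of $G^*$ exact rather than merely plausible.
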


\begin{theorem}\label{th2}
There exists a one-to-one correspondence between inequivalent embeddings of a $3$-connected $3$-regular planar graph on the Klein bottle and subgraphs of the dual graph of the graph embedded on the sphere isomorphic to $K_{2,2m-1}$ or $K_{1,1,2m}$ for some positive integer $m$, or one of the six graphs $A_1$ to $A_6$ shown in Fig.~$\ref{HX}$.
\end{theorem}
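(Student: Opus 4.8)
The plan is to deduce Theorem \ref{th2} from the complete classification of re-embedding structures on the Klein bottle established in Section \ref{sec2}, and to convert each such structure into the promised dual subgraph. Since $G$ is $3$-connected and planar, Whitney's theorem gives it a unique embedding on the sphere, so its dual $G^*$ is a well-defined map on the sphere; because $G$ is $3$-regular, every face of $G^*$ is a triangle, i.e.\ $G^*$ is a $3$-connected simple triangulation. Throughout, faces of $G$ correspond to vertices of $G^*$ and edges of $G$ shared by two faces correspond to edges of $G^*$, so any configuration of faces of $G$ that a re-embedding disturbs is recorded as a subgraph of $G^*$. This translation is the mechanism that turns a topological statement about embeddings into a combinatorial statement about $G^*$.

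First I would fix an embedding $f$ of $G$ on the Klein bottle. Comparing Euler's formula on the sphere and on the Klein bottle, and using $|E|=\tfrac{3}{2}|V|$, shows that $f$ has exactly two fewer faces than the spherical embedding, so the re-embedding must merge faces in a controlled way. By the patch-structure results of Mohar et al.\ \cite{Mohar2,Mohar3}, together with $3$-connectivity (which lets us disregard Whitney's $2$-flippings), the modification is confined to a bounded patch whose boundary meets $G$ in a prescribed pattern. I would encode that pattern by the set of faces of $G$ participating in the patch and the edges of $G^*$ joining them, obtaining a subgraph $H\subseteq G^*$; conversely, I would show that the surgery reversing the patch can be carried out starting from any subgraph of the listed isomorphism types, so that the assignment $f\mapsto H$ is well defined and invertible up to equivalence.

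The core of the argument is to determine which isomorphism types of $H$ actually occur. The two infinite families arise from the ``cyclic'' re-embeddings analogous to the toroidal case of Theorem \ref{th0}: one pattern surrounds an edge $uv$ of $G^*$ by its common neighbours and yields $K_{1,1,2m}$, while the other pairs two non-adjacent faces with a fan of common neighbours and yields $K_{2,2m-1}$. The decisive point is a parity constraint: because the Klein bottle is obtained by an orientation-reversing identification, the number of face-slots traversed by the closing re-embedding curve must be odd, which forces the parameters $2m-1$ and $2m$ here, in contrast to the even parameters $2m$ and $2m-1$ shifted the other way in the orientable torus. I would verify this parity bookkeeping by tracking the local rotation scheme along a noncontractible curve as it closes up through a crosscap.

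The main obstacle will be the six exceptional graphs $A_1,\dots,A_6$ of Fig.~\ref{HX}. Unlike the torus, the Klein bottle carries both one-sided and two-sided noncontractible simple closed curves and admits essentially distinct gluing types, so small patches can close up in sporadic ways that fit neither infinite family. I would enumerate these by classifying, up to homeomorphism of the Klein bottle, the ways a planar patch can be re-embedded with its boundary realizing each admissible combination of one- and two-sided essential curves, and then read off the corresponding dual subgraph in each case; the $3$-connectivity and $3$-regularity of $G$ bound the size of such patches and thereby guarantee that only finitely many exceptional types survive. Proving that exactly $A_1$ through $A_6$ occur---establishing both that each is realizable and that no further configuration arises---is the delicate part, and it is precisely here that the Klein bottle diverges from the cleaner statements of Theorems \ref{th1} and \ref{th0}. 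Finally, I would confirm injectivity by checking that non-isomorphic subgraphs, as well as distinct placements of isomorphic ones in the embedded $G^*$, yield inequivalent embeddings, completing the one-to-one correspondence.
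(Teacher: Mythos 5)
Your proposal correctly identifies the overall architecture the paper uses: every embedding on the Klein bottle is a re-embedding $f_X(G)$ for a unique twist-set $X\subseteq E(G)$, this $X$ is recorded as a subgraph of the dual, and the theorem reduces to classifying which dual subgraphs occur. However, the heart of the proof --- the classification itself --- is only announced, not carried out, and the tools you propose to reach it would not get you there. You defer the ``delicate part'' (showing that exactly $K_{2,2m-1}$, $K_{1,1,2m}$ and $A_1,\dots,A_6$ arise) to an unexecuted enumeration of ``ways a planar patch can be re-embedded,'' and you lean on the patch-structure results of Mohar et al.; but the paper explicitly points out that those results give specific structures only for the projective plane, so they cannot be cited to settle the Klein bottle case. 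The actual engine of the paper's Lemma~\ref{lemKle} is different and concrete: a facial cycle of $G$ containing a twisted edge becomes a non-separating cycle (Lemma~\ref{lem1}); two such cycles cross exactly when they share an edge of $X$, i.e.\ when the corresponding vertices of $H_X$ are adjacent; on the Klein bottle there are, up to homotopy, exactly two disjoint $1$-sided essential curves and one non-separating $2$-sided one; and a vertex of $H_X$ has odd degree iff its facial cycle is $1$-sided (Lemma~\ref{lem2}). These facts translate into combinatorial constraints on $H_X$ (odd-degree vertices induce at most two complete graphs, even-degree vertices form an independent set each joined to all odd-degree vertices), and a planarity-driven case analysis on $|V_{odd}|$ and $|V_{even}|$ then yields precisely the two infinite families and the six sporadic graphs. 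None of this appears in your write-up, and your substitute parity heuristic (``the closing curve traverses an odd number of face-slots'') is not a proof of the parity pattern $2m-1$ versus $2m$.

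A secondary but real issue is your definition of the dual subgraph: you take ``the faces participating in the patch and the edges of $G^*$ joining them,'' which reads as a vertex-induced subgraph. The correct object is the \emph{edge}-induced subgraph $H_X$ whose edges are exactly the duals of $X$; a vertex-induced version can contain extra edges (e.g.\ the two apex faces of a $K_{2,2m-1}$ configuration may well share an edge of $G$ not in $X$), which would break both the isomorphism types and the injectivity of the correspondence. Finally, injectivity does not need the case-by-case check you propose at the end: it follows once and for all from the observation in Section~\ref{sec1} that distinct twist-sets $X_1\neq X_2$ always give inequivalent embeddings, so the map $X\mapsto f_X(G)$ is already a bijection onto equivalence classes and the only remaining task is to characterize the fibers landing on the Klein bottle --- which is exactly the missing classification.
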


\begin{figure}[htbp]
\begin{center}
\includegraphics[width=80mm]{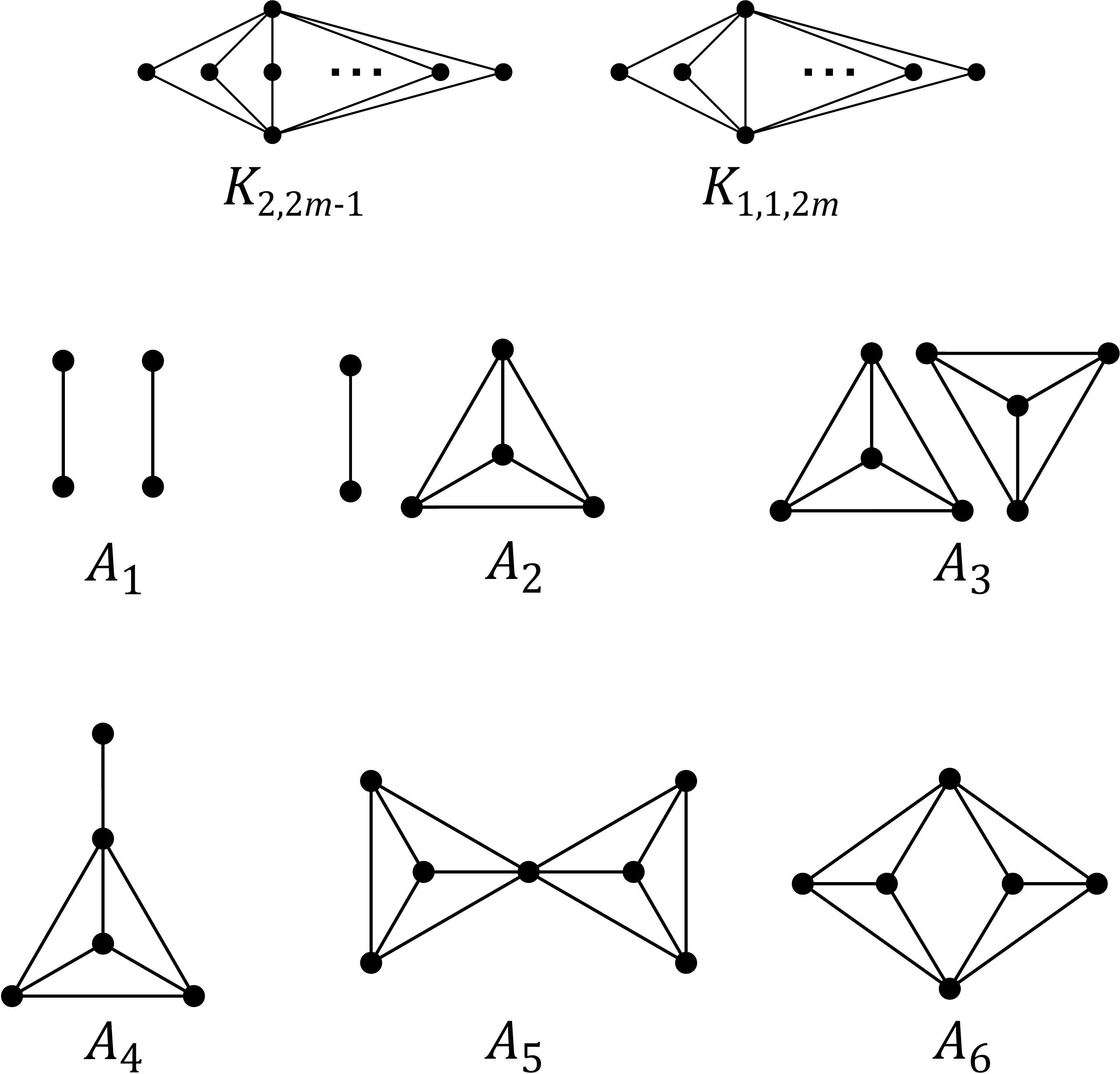}
\end{center}
\caption{The eight graphs}
\label{HX}
\end{figure}

Based on these theorems,
in Section \ref{sec3}, we will give explicit bounds for the number of
inequivalent embeddings of a 3-connected 3-regular planar graph $G$ on each of the projective-plane, the torus and the Klein bottle.
In addition, we will propose effective algorithms for enumerating and counting these embeddings.
In particular, even though $G$ may have exponentially many inequivalent embeddings on the torus and the Klein bottle,
we can calculate the total number of such embeddings in polynomial time.

These results allow the computation and the study of the \emph{genus distributions} of a large family of graphs.
We denote the number of inequivalent embeddings of a graph $G$ on the orientable surface of genus $k$ (resp. the non-orientable surface of genus $h$) by $g_G(k)$ (resp. $\tilde{g}_G(h)$).
The genus distribution (resp. non-orientable genus distribution) of $G$ is defined as the sequence $g_G(0), g_G(1), g_G(2), \cdots$
(resp. $\tilde{g}_G(0), \tilde{g}_G(1), \tilde{g}_G(2), \cdots$).
The topic of genus distributions was introduced by Gross and Furst \cite{Gross1}
and studied in various papers.
Whether the genus distribution of every graph is log-concave is an interesting problem conjectured in \cite{Gross2},
and still remains to be solved.
From the genus distribution's point of view,
we give explicit bounds for $g_G(1),\tilde{g}_G(1)$ and $\tilde{g}_G(2)$ of a $3$-connected $3$-regular planar graph $G$
and algorithms for calculating them.

First of all, we focus on facial cycles in a $3$-connected $3$-regular planar graph embedded on the sphere in Section \ref{sec1}.

\section{Re-embeddings of planar graphs with twisted edges}\label{sec1}

\subsection{Rotation systems and embedding schemes}

First, we introduce two combinatorial ways of describing embeddings of a graph;
``rotation systems" and ``embedding schemes".
A general description on the rotation system and the embedding scheme can be found in \cite{tex2}.

Suppose that a connected graph $G$ is embedded on an orientable surface.
A \emph{rotation} $\rho_v$ around a vertex $v$ of $G$ is a cyclic permutation of edges incident with a vertex $v$
such that $\rho_v(e)$ is the successor of $e$ in the clockwise ordering around $v$.
A \emph{rotation system} for the embedded graph $G$ is the collection of $\rho_v$, denoted by $\rho =\{ \rho_v : v\in V(G)\}$.
It is well-known that every embedding of a connected graph on an orientable surface is uniquely determined up to equivalence by its rotation system.
Moreover, there are no rotation systems representing this embedding other than this rotation system and its inverse.

On the other hand, in order to include embeddings on nonorientable surfaces,
we have to add the following concept.
Let $f(G)$ be another embedding of $G$ on a surface,
which is not necessarily orientable.
There are two possible cyclic ordering of edges incident with each vertex $v$ of $f(G)$.
Choose one of them and denote it by $\rho_v$.
A closed walk $C$ in an embedded graph is
\emph{facial}
if $C$ bounds a face of the graph.
A \emph{signature} of $E(G)$ is a map outputting $1$ or $-1$ from each edge of $G$,
denoted by $\lambda$,
such that for an edge $e=uv$ with its endvertices $u$ and $v$,
$\lambda(e)=1$
if a subwalk induced by the three edges $\rho_u(e)$, $e$ and $\rho_v^{-1}(e)$
is included in a facial walk,
otherwise $\lambda(e)=-1$.
It can be shown that this definition of the signature $\lambda$ is consistent,
that is, $\lambda(uv)=\lambda(vu)$ for every edge $uv$.
The pair $(\rho , \lambda )$,
where $\rho =\{ \rho_v : v\in V(G)\}$ is obtained by the above procedure,
is called an \emph{embedding scheme} for $f(G)$.
An embedding scheme determines exactly one embedding of $G$.
Unfortunately,
an embedding scheme representing a given embedding of $G$ is not uniquely determined, unlike rotation systems for the orientable case.
For an embedded graph $G$
associated with a given embedding scheme $(\rho , \lambda )$,
an edge $e$ with $\lambda(e)=-1$ is called \emph{twisted}.
If there are no twisted edges
then $G$ is embedded on an orientable surface obtained by the rotation system $\rho$.

Hereafter, suppose that $G$ is 3-connected, 3-regular and planar.
In addition, we assume that $G$ is already embedded on the sphere with its rotation system $\rho = \{ \rho_v : v\in V(G)\}$.
(By Whitney's theorem, $G$ is uniquely embeddable on the sphere.)
Since $G$ is 3-regular,
there are only two possible rotations around each vertex of $G$,
and one of them is the inverse of the other.
This implies that for any embedding $f(G)$ of $G$ on any surface,
we can choose $\rho _v$ as the local rotation around each vertex $v$.
Thus, $f(G)$ can be determined by an embedding scheme $(\rho , \lambda)$
with a suitable signature $\lambda$.
We denote the set of twisted edges associated with this embedding scheme by $X$.
In this situation,
we regard this embedding as a re-embedding of $G$
obtained by twisting all edges of $X$
and denote it by $f_X(G)$.
In addition, let $F^2_X$ be the surface
where $f_X(G)$ is embedded.

\subsection{Facial cycles in planar graphs}

Choose two distinct subsets $X_1$ and $X_2$ of $E(G)$.
Then, there is an edge $e$ belonging to only one of either $X_1$ or $X_2$.
We may assume that $e\in X_2$.
It is easy to check that every facial walk of a 3-connected planar graph embedded on the sphere is a cycle.
Thus, there are exactly two facial cycles containing $e$ of $G$,
denoted by $C$ and $C'$.
Let $e_1$ and $e_2$ be the edges of $C'$ adjacent to $e$.
Fig.~\ref{arounde} presents local neighbourhoods around $f_{X_1}(e)$ and $f_{X_2}(e)$.
Note that $f_{X_1}(C)$ and $f_{X_2}(C)$ are drawn by bold lines in Fig.~\ref{arounde}.

\begin{figure}[htbp]
\begin{center}
\includegraphics[width=80mm]{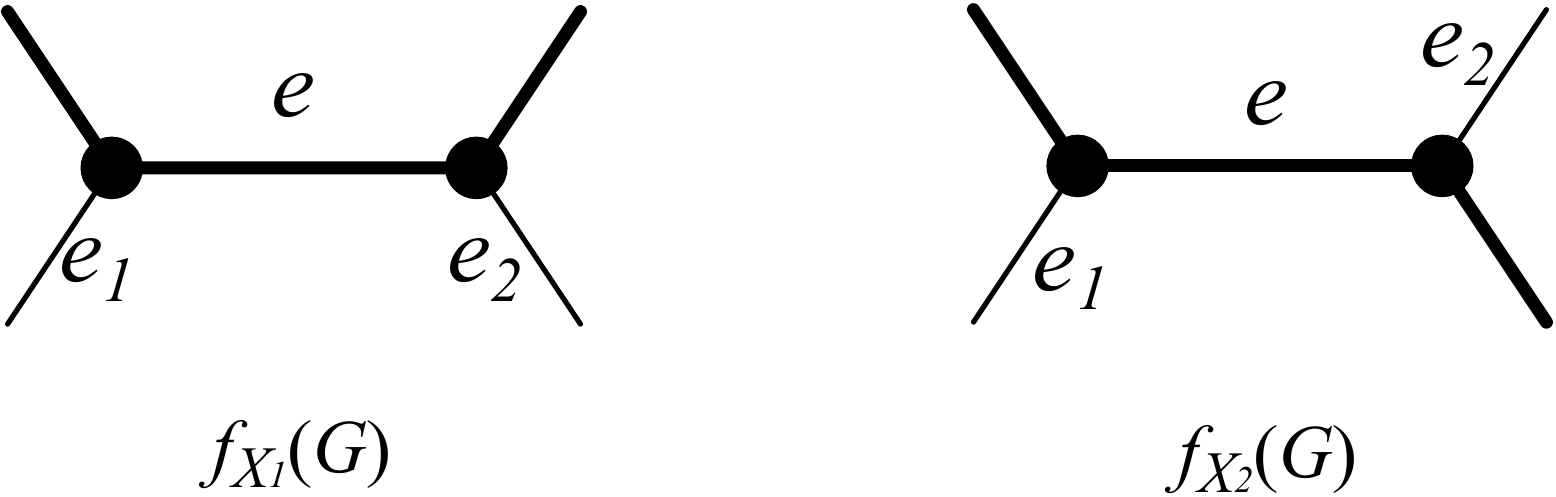}
\end{center}
\caption{The neighbourhoods around $e$ in $f_{X_1}(G)$ and $f_{X_2}(G)$}
\label{arounde}
\end{figure}

For the walk $W=e_1ee_2$ of $G$,
$f_{X_1}(W)$ constructs a consecutive part of a facial walk in $f_{X_1}(G)$ on $F^2_{X_1}$
but $f_{X_2}(W)$ are not so on $F^2_{X_2}$.
Thus, $F^2_{X_1} \neq F^2_{X_2}$, or $f_{X_1}(G)$ and $f_{X_2}(G)$ are not equivalent.
It implies that the choice of a subset $X$ of $E(G)$ uniquely induces the re-embedding $f_X(G)$ of $G$ up to equivalence.
Moreover, the total number of inequivalent embeddings of $G$ is $2^{\vert E(G)\vert }$,
and $F^2_X$ is homeomorphic to the sphere
if and only if $X$ is empty.

In the situation shown in the right of Fig.~\ref{arounde},
we say that two cycles $f_{X_2}(C)$ and $f_{X_2}(C')$ \emph{cross} along an edge $e$.
Since $G$ is 3-connected and planar,
there are no edges and vertices
contained in both $C$ and $C'$
other than $e$ and its endvertices.
Thus, $f_{X_2}(C)$ and $f_{X_2}(C')$ cross exactly once.
Note that any two cycles in $f_X(G)$ for a given subset $X$ of $E(G)$ do not cross at a single vertex since $G$ is 3-regular.

Let $H$ be a subgraph of a graph $G'$.
An \emph{$H$-bridge} in $G'$ is a subgraph of $G'$
induced by either an edge not in $H$
but with both ends in $H$,
or a component of $G'-V(H)$
together with all edges joining it to $H$.
Note that any two $H$-bridges are edge-disjoint.
It has been known that for a facial cycle $C$ of a 3-connected planar graph $G$,
there is only one $C$-bridge in $G$ (e.g., see \cite[p.39--40]{tex2}).

\begin{lemma}\label{lem1}
Let $C$ be a facial cycle in $G$
and let $f_X(G)$ be a re-embedding of $G$ with a given subset $X$ of $E(G)$.
Then, $f_X(C)$ is a non-separating cycle on $F^2_X$ if and only if $f_X(C)$ has a twisted edge.
\end{lemma}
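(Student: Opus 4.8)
The plan is to prove the two implications separately, drawing on the facts about facial cycles and crossings already established above.

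For the forward direction, suppose $f_X(C)$ contains a twisted edge $e\in X$. Since $C$ is a facial cycle of the spherical embedding of $G$, there is a second facial cycle $C'$ through $e$, and $C$ and $C'$ share no edge or vertex other than $e$ and its endvertices. Because $e$ is twisted, $f_X(C)$ and $f_X(C')$ cross along $e$, and as recorded above they cross exactly once: they meet only along $e$, and they cannot cross at a vertex since $G$ is $3$-regular. I would then invoke the elementary topological fact that a separating simple closed curve meets every closed curve an even number of times (any closed curve must return to the side from which it started). Since $f_X(C')$ meets $f_X(C)$ exactly once, an odd number, the curve $f_X(C)$ cannot be separating.

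For the converse I would argue the contrapositive: if no edge of $C$ is twisted, then $f_X(C)$ is separating. Let $D$ be the face of the spherical embedding bounded by $C$, with boundary walk $v_1e_1v_2e_2\cdots v_ke_kv_1$. The plan is to show that $D$ survives as a face of $f_X(G)$. Tracing the boundary of the face on the $D$-side using the embedding scheme $(\rho,\lambda)$, at each vertex $v_i$ one passes from $e_{i-1}$ to $e_i$, the two cycle-edges that are consecutive in $\rho_{v_i}$ and bound the $D$-corner; the third edge at $v_i$ leads into the unique $C$-bridge and is never traversed. Since none of $e_1,\dots,e_k$ is twisted, the signal governing the choice of $\rho_{v_i}$ versus $\rho_{v_i}^{-1}$ never changes sign along this walk, so the same rotation direction is applied at every $v_i$ and the trace closes up after reading off exactly $e_1,\dots,e_k$. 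Hence $D$ is again a face of $f_X(G)$, bounded precisely by $f_X(C)$. As embeddings are cellular, this face is a $2$-cell, so $f_X(C)$ bounds a disk on $F^2_X$ and is therefore separating, completing the contrapositive.

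I expect the main obstacle to be this second direction, namely making rigorous that the untwistedness of the edges of $C$ forces the face-tracing procedure to reproduce $D$ exactly, so that a twist on some bridge edge cannot disturb the boundary walk of $D$. The crux is verifying that the accumulated signature controlling the local rotation at each $v_i$ depends only on the edges of $C$ actually traversed, all of which are untwisted; here the single-$C$-bridge property is convenient, since it identifies, at each $v_i$, exactly which edge leaves the cycle and is therefore irrelevant to the trace of $D$.
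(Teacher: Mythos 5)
Your proof is correct and follows essentially the same route as the paper's. The only cosmetic differences are that for the ``twisted $\Rightarrow$ non-separating'' direction the paper concludes from the two edges of $f_X(C')$ adjacent to $f_X(e)$ lying on opposite sides of $f_X(C)$ yet belonging to the single connected $f_X(C)$-bridge, whereas you invoke the equivalent parity fact that a separating simple closed curve meets any closed curve an even number of times; and for the converse the paper simply asserts that a cycle with no twisted edges remains facial (hence separating), where you spell out the face-tracing argument in detail.
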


\begin{proof}
It is easy to see that if $f_X(C)$ has no twisted edges then it is facial in $f_X(G)$
and hence it separates $F^2_X$ into two regions.

Suppose that $f_X(C)$ has a twisted edge $f_X(e)$,
that is, an edge $e$ of $G$ is in $X$.
Let $C'$ be the other facial cycle of $G$
containing $e$.
As shown in the right of  Fig.~\ref{arounde},
$f_X(C)$ and $f_X(C')$ cross along $e$ and hence two edges of $f_X(C')$ adjacent to $f_X(e)$ are located separately in opposite sides of $f_X(C)$.
However, both of these edges are contained in the unique $f_X(C)$-bridge.
This implies that $f_X(C)$ does not separate $F_X^2$.
\end{proof}

\section{Characterizations of re-embedding structures}\label{sec2}

In this section, we shall characterize the structures of $f_X(G)$
when $F^2_X$ is homeomorphic to the projective-plane, the torus or the Klein bottle,
and show the following theorems.

\begin{theorem}\label{thpro}
A $3$-connected $3$-regular graph embedded on the projective-plane is planar
if and only if it has one of the two structures $(P1)$ and $(P2)$ shown in Fig.~$\ref{pro}$.
\end{theorem}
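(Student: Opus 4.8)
The plan is to work entirely inside the $f_X$-framework set up above. Since a $3$-connected $3$-regular planar graph $G$ has a unique embedding on the sphere (Whitney), every projective-planar embedding of it is $f_X(G)$ for a suitable $X\subseteq E(G)$, and $F^2_X$ is the projective-plane exactly when $\chi(F^2_X)=1$ (the unique closed surface of Euler characteristic $1$), equivalently when twisting $X$ decreases the number of faces by exactly one. So the whole theorem reduces to determining which sets $X$ lose precisely one face and then recognising the two resulting pictures as $(P1)$ and $(P2)$. The backward implication is immediate, since $(P1)$ and $(P2)$ are by construction re-embeddings $f_X(G)$ of the planar graph $G$; the content lies in the forward implication.

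First I would isolate the faces that become non-contractible. By Lemma~\ref{lem1}, a facial cycle $C$ has $f_X(C)$ non-separating (equivalently, on the projective-plane, non-contractible) if and only if $C$ contains an edge of $X$; let $\mathcal{C}_X$ be the set of such faces. The key topological input is that on the projective-plane any two non-contractible simple closed curves meet an odd number of times, hence must cross. Now two facial cycles of a $3$-connected planar graph share at most one edge and, by the analysis around Fig.~\ref{arounde}, they cross in $f_X(G)$ precisely along a shared twisted edge (and, since $G$ is $3$-regular, never at a vertex). Therefore every two members of $\mathcal{C}_X$ must share an edge of $X$. Dually this says exactly that the subgraph $X^{*}\subseteq G^{*}$ formed by the duals of the edges of $X$ is a complete graph on its vertex set $\mathcal{C}_X$; and since $G^{*}$ is planar, $X^{*}\cong K_n$ forces $n\le 4$, leaving $X^{*}\cong K_2$, $K_3$, or $K_4$.

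It remains to compute $\chi(F^2_X)$ in each case. The faces outside $\mathcal{C}_X$ meet no twisted edge and are unchanged, so only the $n$ faces of $\mathcal{C}_X$ can merge. I would argue that the number $m_n$ of faces into which they merge depends only on the embedding scheme carried by $X^{*}$ itself, namely $X^{*}$ with the rotation induced from the (unique) planar embedding of $K_n$ and with every edge twisted: the untwisted arcs of the $C_i$ between consecutive twisted edges are traversed inertly, so only the cyclic order of the twisted edges around each face governs the reconnection. A direct face trace of this small scheme gives $m_2=m_3=1$ and $m_4=3$, whence $\chi(F^2_X)=2-n+m_n$ equals $1$ for $n\in\{2,4\}$ but $0$ for $n=3$. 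Thus $n=3$ yields an Euler characteristic $0$ surface and is excluded, while $X^{*}\cong K_2$ and $X^{*}\cong K_4$ give exactly the projective-plane and correspond to $(P1)$ and $(P2)$.

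I expect the main obstacle to be this last step: making rigorous that the merge number $m_n$ is a local invariant of $X^{*}$, i.e.\ that the global shape of the faces $C_i$ and of $G$ outside them is irrelevant. The clean way is to note that the induced embedding of $X^{*}$ as a subgraph of the sphere-embedded $G^{*}$ must be the unique planar embedding of $K_n$, so the rotation data governing the reconnection is the same for every $G$ (and matches the self-dual case $G=K_4$, which can be traced by hand). A secondary delicate point is the crossing argument: one must justify that the intersection number of $f_X(C)$ and $f_X(C')$ modulo $2$ is exactly the indicator of a shared twisted edge, using $3$-connectivity to bound the shared edges by one and $3$-regularity to exclude crossings at a vertex.
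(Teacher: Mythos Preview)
Your argument is correct and shares the paper's core strategy: the crossing property of non-separating curves on the projective plane forces $H_X$ (your $X^{*}$) to be a complete planar graph, hence one of $K_2$, $K_3$, $K_4$. The only real difference is in the endgame. To exclude $K_3$ and to confirm that $K_2$ and $K_4$ land on the projective plane, you compute $\chi(F^2_X)=2-n+m_n$ via a face-merge count, which obliges you to argue that $m_n$ is a local invariant of $X^{*}$ --- exactly the step you flag as the main obstacle. The paper instead dispatches $K_3$ in one line via Lemma~\ref{lem2} (every vertex of $K_3$ has even degree, so $F^2_X$ would be orientable and hence not the projective plane) and handles $K_2$ and $K_4$ by simply drawing the re-embeddings (Fig.~\ref{proHX} $\to$ Fig.~\ref{pro}). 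The paper's route is shorter and avoids the local-invariance issue altogether; your Euler-characteristic computation is a legitimate alternative and has the mild advantage of simultaneously excluding $K_3$ and certifying $K_2$, $K_4$, but since Lemma~\ref{lem2} is already available you could save yourself that work.
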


\begin{figure}[htbp]
\begin{center}
\includegraphics[width=80mm]{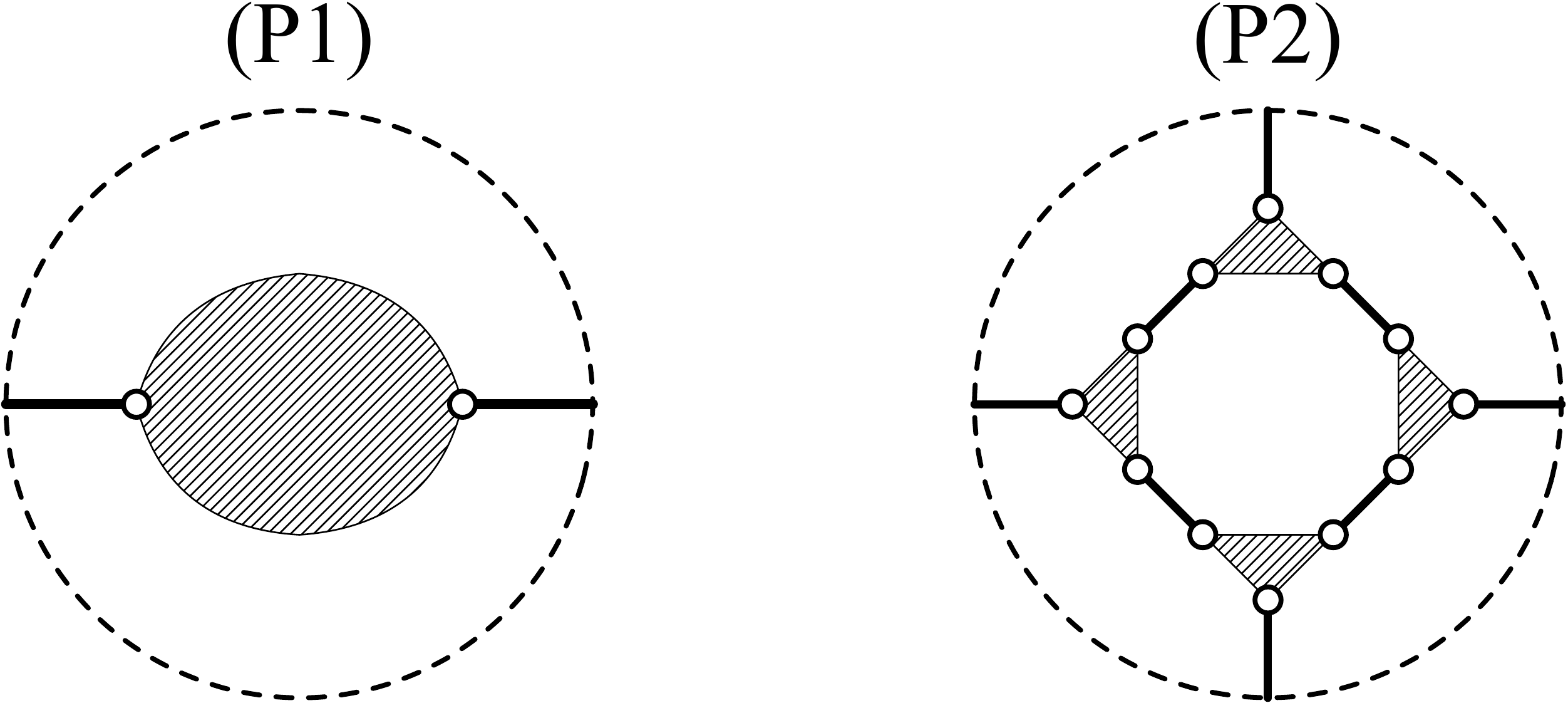}
\end{center}
\caption{Re-embedding structures on the projective-plane}
\label{pro}
\end{figure}

\begin{theorem}\label{thtorus}
A $3$-connected $3$-regular graph embedded on the torus is planar
if and only if it has one of the two structures $(T1)$, $(T2)$ and $(T3)$ shown in Fig.~$\ref{torus}$.
\end{theorem}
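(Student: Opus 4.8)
The plan is to prove both implications by translating everything into the arrangement of the non-separating facial cycles of $f_X(G)$, so that Lemma~\ref{lem1} does the initial bookkeeping. Fix $X\subseteq E(G)$ with $F^2_X$ homeomorphic to the torus and let $\mathcal{C}$ be the set of facial cycles of $G$ that contain an edge of $X$. By Lemma~\ref{lem1}, $f_X(C)$ is a non-separating simple closed curve exactly when $C\in\mathcal{C}$, whereas every facial cycle disjoint from $X$ survives as a genuine face of $f_X(G)$. Using the facts recorded before Lemma~\ref{lem1} — that two distinct facial cycles of $G$ meet in at most one edge, and that they cross in $f_X(G)$ precisely, and exactly once, along a shared twisted edge — the \emph{crossing graph} whose vertices are the members of $\mathcal{C}$ and whose edges record shared twisted edges is exactly the subgraph of the dual graph $G^{*}$ spanned by $X$. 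The statement is therefore equivalent to identifying this spanned subgraph with $K_{2,2,2}$, $K_{2,2m}$ or $K_{1,1,2m-1}$.

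The core is a homological classification on the torus. Each $f_X(C)$ carries a class in $H_1(F^2_X;\mathbb{Z})\cong\mathbb{Z}^2$, non-zero by non-separation, and two members of $\mathcal{C}$ cross if and only if their algebraic intersection number is $\pm1$, and otherwise do not cross and hence carry linearly dependent (parallel) classes. Since the intersection pairing on $\mathbb{Z}^2$ is the standard symplectic form, a direct check shows that at most three distinct directions can occur and that, after a change of basis, they are $(1,0)$, $(0,1)$ and $(1,1)$. Grouping $\mathcal{C}$ by direction then forces the crossing graph to be a complete multipartite graph $K_{n_1,n_2,n_3}$ on at most three parts, the $i$-th part consisting of the curves in the $i$-th direction. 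This already produces the shape of the three allowed families; what remains is to determine the admissible part sizes.

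The part sizes are controlled by two further ingredients. First, orientability: the parity of each $n_i$ governs whether the corresponding family of parallel curves closes up orientably, and a local analysis of how the twisted edges reverse orientation along these curves shows that $F^2_X$ is the torus only for the parities realised by $K_{2,2,2}$, $K_{2,2m}$ and $K_{1,1,2m-1}$, the complementary parities producing the Klein bottle. Second, an Euler-characteristic count: writing $F_0$ for the number of spherical faces of $G$, the faces of $f_X(G)$ are the $F_0-|\mathcal{C}|$ preserved faces together with the new faces created when the members of $\mathcal{C}$ merge, and $\chi(F^2_X)=0$ forces exactly $|\mathcal{C}|-2$ new faces; matching this with the regions cut out by the homological arrangement is expected to eliminate the remaining size vectors and isolate the three families. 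Translating each surviving arrangement into its concrete local picture gives the structures $(T1)$, $(T2)$ and $(T3)$ of Fig.~\ref{torus}. The reverse implication is then the routine direction: for each of $(T1)$–$(T3)$ one reads off $V$, $E$ and $F$ from the figure and checks $V-E+F=0$ together with orientability, so $F^2_X$ is indeed the torus.

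The main obstacle is the realizability step, namely showing that no crossing pattern beyond the three multipartite families actually occurs for facial cycles of a planar $G$. The homology of $\mathbb{Z}^2$ bounds the number of directions but does not by itself forbid, say, a three-direction pattern with unequal parts of size at least two, nor does it guarantee that curves assigned the same direction are genuinely disjoint rather than merely of zero algebraic intersection. Ruling these out requires the planarity and $3$-connectivity of $G$: here the uniqueness of the $C$-bridge exploited in Lemma~\ref{lem1}, together with the fact that facial cycles meet in at most one edge, are the levers I expect to exclude the spurious configurations and pin the part sizes down to $(2,2,2)$, $(2,2m)$ and $(1,1,2m-1)$. The final delicate point is bookkeeping the preserved faces inside the arrangement, so that the global picture is genuinely one of $(T1)$–$(T3)$ and not a degenerate variant of it.
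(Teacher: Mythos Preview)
Your setup matches the paper's: the crossing graph on the non-separating cycles $f_X(C)$ is exactly the subgraph $H_X$ of the dual $G^{*}$ induced by $X$, and the torus forces it to be complete multipartite (the paper argues via homotopy rather than $H_1$, but to the same effect). Where you diverge is the elimination step, and here you are missing the one-line observation that does all the work. Since $H_X$ is a subgraph of the planar graph $G^{*}$, it is itself \emph{planar}. The planar complete multipartite graphs form the short explicit list $K_{1,1,1,2}$, $K_4$, $K_{2,2,2}$, $K_{1,2,2}$, $K_{1,1,n}$, $K_{2,n}$, $K_{1,n}$; imposing that every vertex of $H_X$ have even degree (this is Lemma~\ref{lem2}, which is your ``orientability'' ingredient stated correctly --- note that it is the degree of a vertex, i.e.\ the sum of the \emph{other} part sizes, that must be even, not the parity of $n_i$ itself as you write) immediately leaves only $K_{2,2,2}$, $K_{2,2m}$ and $K_{1,1,2m-1}$. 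That is the whole argument.

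Your homological direction count correctly caps the number of parts at three, but this is subsumed by planarity and does not by itself exclude $K_{4,4}$, $K_{3,3,3}$, $K_{2,2,4}$, and so on --- all of which satisfy your parity condition. The Euler-characteristic bookkeeping and $C$-bridge arguments you propose as the remedy are the hard way round: you flag them yourself as the ``main obstacle'', and it is not evident that the face count $N=|\mathcal{C}|-2$ actually pins down the part sizes without a substantial case analysis. The paper never needs any of this. For the converse the paper also uses that each of $K_{2,2,2}$, $K_{2,2m}$, $K_{1,1,2m-1}$ has a unique spherical embedding up to relabelling, which is what forces the local picture of $G$ around $X$ to be one of the configurations in Fig.~\ref{torusHX} and hence one of $(T1)$--$(T3)$ after twisting; your direct Euler-characteristic check on the figures is fine for showing those structures land on the torus, but it does not by itself supply this identification.
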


\begin{figure}[htbp]
\begin{center}
\includegraphics[width=110mm]{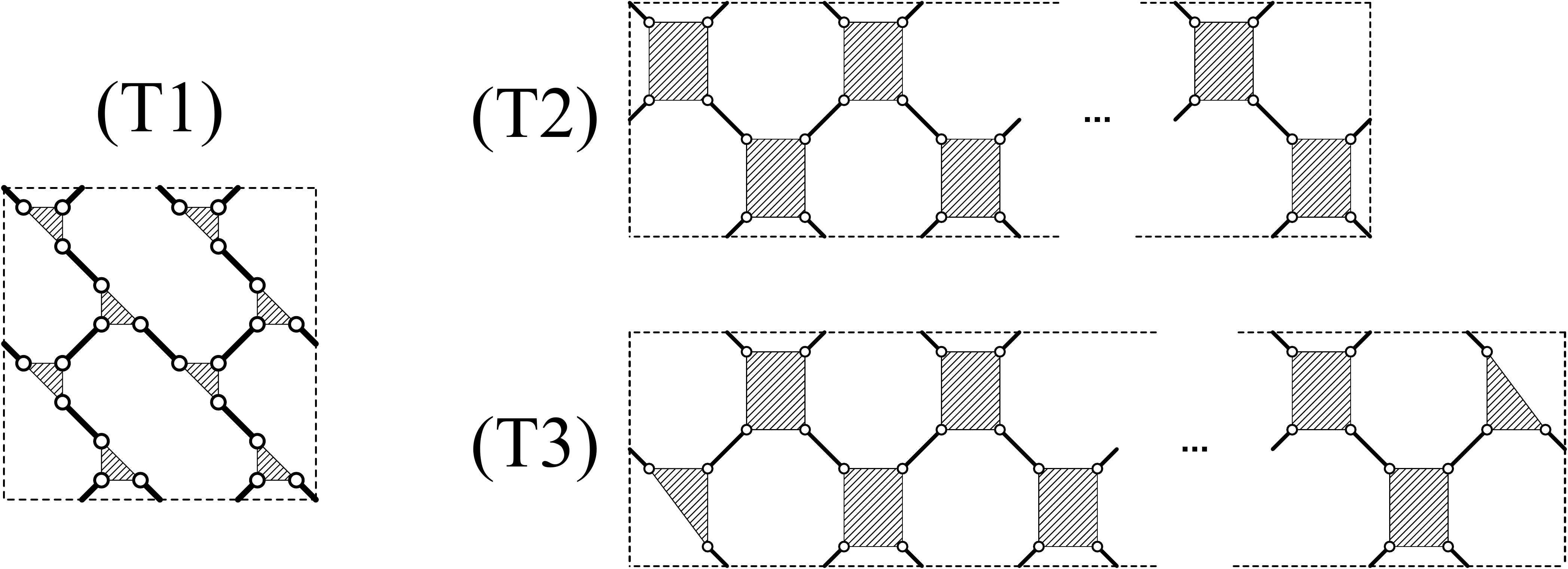}
\end{center}
\caption{Re-embedding structures on the torus}
\label{torus}
\end{figure}

\begin{theorem}\label{thKle}
A $3$-connected $3$-regular graph embedded on the Klein bottle is planar
if and only if it has one of the eight structures $(K1)$ to $(K8)$ shown in Fig.~$\ref{Kle}$.
\end{theorem}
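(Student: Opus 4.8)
The plan is to reduce the statement to a classification of the twisted-edge sets $X\subseteq E(G)$ for which $F^2_X$ is homeomorphic to the Klein bottle, and then to read the eight pictures off that classification. Because $G$ is $3$-connected and planar, Whitney's theorem fixes its spherical embedding uniquely, and because $G$ is $3$-regular every embedding of $G$ on any surface can be described with the single rotation system $\rho$; hence a Klein-bottle embedding of a \emph{planar} $3$-connected $3$-regular graph is exactly some re-embedding $f_X(G)$. The backward implication is then immediate, since each of $(K1)$--$(K8)$ in Fig.~\ref{Kle} is drawn so that untwisting the indicated edges restores the spherical embedding; so I would spend the effort on the forward implication, namely that $F^2_X\cong$ Klein bottle forces the configuration of $X$ to be one of the eight.

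For the forward direction I would isolate the two invariants that characterise the Klein bottle among all surfaces, Euler characteristic $0$ together with non-orientability, and convert each into a condition on $X$. The Euler characteristic I would control by counting faces: by Lemma~\ref{lem1} a facial cycle of $G$ survives as a face of $f_X(G)$ exactly when it meets no edge of $X$, while every facial cycle meeting $X$ becomes non-separating, and following how the latter merge along the crossings of Fig.~\ref{arounde} yields the new face count $F'$ and hence $\chi(F^2_X)=|V(G)|-|E(G)|+F'$. Since the spherical embedding satisfies $|V(G)|-|E(G)|+F=2$, the condition $\chi(F^2_X)=0$ says precisely that the twisting destroys a net total of two faces. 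Non-orientability I would detect by the standard criterion for embedding schemes: $F^2_X$ is non-orientable if and only if some cycle of $G$ contains an odd number of edges of $X$ (equivalently, $X$ is not an edge cut), and this is exactly what separates the Klein bottle from the torus of Theorem~\ref{thtorus}.

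Combining the two conditions splits the analysis into two regimes according to whether the twisted edges, viewed in the dual, form one connected configuration or several. If the net loss of two faces is realised by two vertex-disjoint configurations, each must individually be a projective-plane configuration of the kind classified in Theorem~\ref{thpro}, so the Klein-bottle structure is a superposition of two $(P1)$- or $(P2)$-type patterns; here I would simply list the finitely many ways two such patterns may coexist inside a $3$-connected $3$-regular graph. If instead the loss is realised by a single connected configuration, it is the non-orientable counterpart of the toroidal patterns $(T1)$--$(T3)$: the facial cycles meeting $X$ become non-separating cycles crossing one another exactly along the twisted edges of $X$, and I would analyse how such a system can close up along a one-sided rather than a two-sided noncontractible curve. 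Reading these configurations against the standard position of noncontractible simple closed curves on the Klein bottle, and using that $G$ is $3$-regular so that no two cycles cross at a vertex, should yield two infinite families together with a handful of small exceptions, which together with the superpositions above give exactly the eight structures of Fig.~\ref{Kle}.

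The main obstacle will be completeness and non-redundancy in the connected regime. Non-orientability introduces one-sided essential curves with no analogue on the torus, so the crossing patterns of the non-separating facial cycles are genuinely more varied, and the smallest configurations behave sporadically before the regular families set in; this is the source of the six exceptional graphs $A_1$--$A_6$ underlying Theorem~\ref{th2}. The delicate points are to verify that $3$-connectivity and $3$-regularity really do exclude every configuration outside the listed eight, and that no two of the eight are identified by a homeomorphism of the Klein bottle. The Euler-characteristic and orientability bookkeeping of the second paragraph is what keeps this case analysis finite.
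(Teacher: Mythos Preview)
Your plan is broadly sound---reducing to the classification of $X$ and checking the converse pictorially is exactly what the paper does---but the route you propose for the forward direction differs from the paper's, and the paper's is cleaner.

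The paper does not use Euler characteristic or face-counting at all. Instead it works entirely in the dual: form the subgraph $H_X$ of $G^*$ induced by the edges dual to $X$, and observe that each vertex of $H_X$ corresponds to a facial cycle of $G$ which, by Lemma~\ref{lem1}, becomes non-separating in $f_X(G)$. The key structural observation (Lemma~\ref{lemKle}) is that the \emph{parity} of a vertex's degree in $H_X$ records whether its cycle is $1$-sided or $2$-sided on $F^2_X$, and the homotopy classification of non-separating simple closed curves on the Klein bottle (two $1$-sided classes and one $2$-sided class, with crossing behaviour determined by class membership) then translates directly into adjacency rules for $H_X$: the odd-degree vertices induce at most two disjoint complete graphs, the even-degree vertices are independent, and every even-degree vertex is adjacent to every odd-degree vertex. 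A short case analysis on planar graphs satisfying these rules, together with Lemma~\ref{lem2} to exclude the orientable configurations already handled by Lemmas~\ref{lempro} and~\ref{lemtorus}, yields exactly $K_{2,2m-1}$, $K_{1,1,2m}$, and $A_1,\ldots,A_6$, and each of these unpacks into one of the eight pictures of Fig.~\ref{Kle}.

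Your Euler-characteristic organising principle would eventually need the same homotopy analysis in your ``connected'' regime, as you yourself hint; but the face-counting step you leave implicit (``following how the latter merge along the crossings'') is precisely where the work hides, and your connected/disconnected dichotomy does not line up with the actual classification: $A_4$, $A_5$, $A_6$ are connected yet belong to neither infinite family, while $A_1$, $A_2$, $A_3$ are the disjoint superpositions of projective-plane patterns you anticipate. The paper's dual-graph viewpoint bypasses face-counting altogether and makes the case split purely combinatorial, which is what keeps the proof to a page.
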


\begin{figure}[htb]
\begin{center}
\includegraphics[width=100mm]{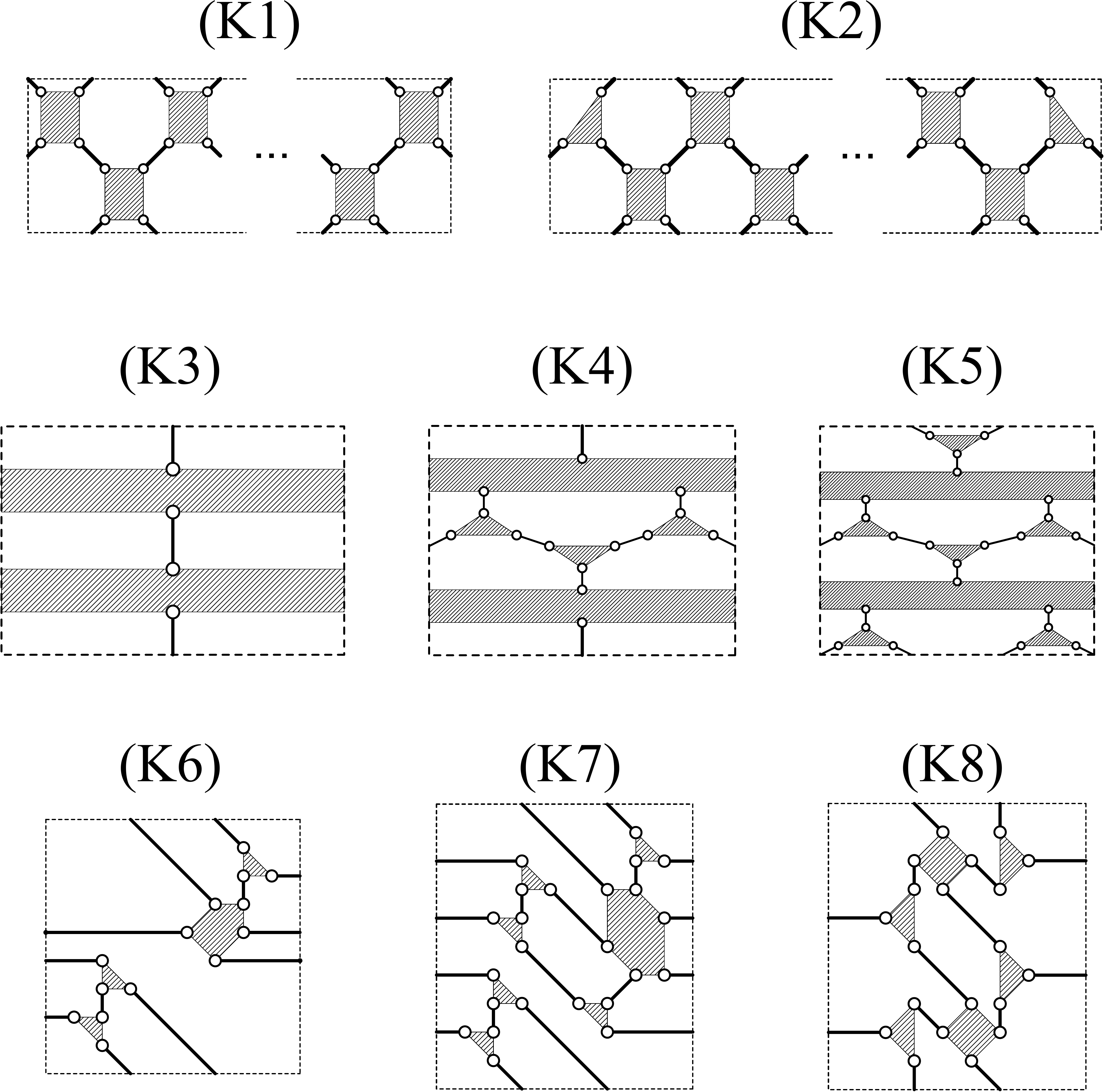}
\end{center}
\caption{Re-embedding structures on the Klein bottle}
\label{Kle}
\end{figure}

In Fig.~\ref{pro},
each pair of antipodal points on the dashed circle should be identified to recover the projective-plane.
Similarly, in Fig.~\ref{torus},
to recover the torus,
both pairs of opposite sides of dashed rectangle should be identified in the same direction,
and in Fig.~\ref{Kle},
to recover the Klein bottle,
the top and bottom sides of the dashed rectangle should be identified in the same direction
while the left and right sides should be identified in the opposite direction.
In these figures,
each of shaded areas corresponds to a component of the graph obtained form the original graph
by deleting all edges drawn by bold lines.
Some vertices on the boundary of such an area may not be different from each other,
that is, the edges drawn by bold lines may not be disjoint.
We omit a series of shaded rectangles from $(T2)$, $(T3)$, $(K1)$ and $(K2)$.
Both $(T2)$ and $(T3)$ have an even number of shaded rectangles
($(T3)$ may have no shaded rectangle),
while both $(K1)$ and $(K2)$ have an odd number of shaded rectangles.

In \cite{Mohar3},
the re-embedding structure of $2$-connected planar graphs on the projective-plane was analysed in detail (see Theorem 3.2 in \cite{Mohar3}),
while we focus on $3$-connected $3$-regular planar graphs.
Then, our re-embedding structure on the projective-plane,
shown in Theorem \ref{thpro},
is a special case in \cite{Mohar3},
and follows from it.
However, our proof is very simple
and important for us to understand other Theorems (e.g. Theorem \ref{th1}).
We thus provide a full proof of Theorem \ref{thpro}.
Moreover, we also construct the re-embedding structures on the torus and the Klein bottle,
which is not characterized completely in \cite{Mohar2,Mohar3}.
One may think that the case of the Klein bottle can be easily obtained from the case of projective-plane,
but this is not true.
Some structures in Theorem \ref{thKle} (e.g. $(K3)$, $(K4)$ and $(K5)$) can be regarded as simple combinations of the structure in Theorem \ref{thpro},
but some are not (e.g. $(K1)$ and $(K2)$).
\medskip

Let $H_X$ be the subgraph of the dual of $G$ (embedded on the sphere) induced by all edges dual to edges of the given subset $X$ of $E(G)$.
Then, there is a vertex of $H_X$ located in the inside of each face of $G$ whose facial cycle has an edge in $X$.
We shall specify what $H_X$ is isomorphic to
when $F^2_X$ is homeomorphic to the projective-plane, the torus or the Klein bottle,
which are essential ideas to prove not only Theorems \ref{thpro}, \ref{thtorus} and \ref{thKle} but also Theorems \ref{th1}, \ref{th0} and \ref{th2}.

First of all,
we give a simple condition of $H_X$
when $F^2_X$ is homeomorphic to a nonorientable surface.
It has been known that an embedding scheme
defines an embedding of a given graph on nonorientable surface
if and only if there is a cycle containing an odd number of twisted edges (see \cite[p.24--25]{tex2}).
It implies the following lemma.

\begin{lemma}\label{lem2}
For a given subset $X$ of $E(G)$,
$F^2_X$ is nonorientable
if and only if there is a vertex of odd degree in $H_X$.
\end{lemma}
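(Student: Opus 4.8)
The plan is to connect the combinatorial criterion for nonorientability—the existence of a cycle with an odd number of twisted edges—to a parity statement about vertex degrees in the dual subgraph $H_X$. The key observation is that cycles in $G$ correspond, via planar duality, to edge cuts in the dual, and the twisted edges of $X$ correspond exactly to the edges of $H_X$. So I want to translate ``a cycle of $G$ carries an odd number of edges of $X$'' into ``a vertex of $H_X$ has odd degree.''

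\medskip

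First I would recall the stated topological fact: $F^2_X$ is nonorientable if and only if there exists a cycle $Z$ in $G$ containing an odd number of twisted edges, i.e.\ $|E(Z)\cap X|$ is odd. Next I would invoke the standard planar-duality correspondence. Since $G$ is embedded on the sphere, its dual $G^*$ is also a planar (spherical) graph, and there is a bijection between the edges of $G$ and those of $G^*$; under this bijection, $H_X$ is precisely the subgraph of $G^*$ whose edge set is $X^* = \{e^* : e\in X\}$. The crucial combinatorial fact I would use is that a set of edges of $G$ forms (the edge set of) a cycle if and only if the corresponding set of dual edges forms a minimal edge cut (bond) of $G^*$, and more generally that cycle spaces and cut spaces are interchanged by duality over $\mathbb{F}_2$. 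Thus, for any vertex $v^*$ of $G^*$, the star of edges incident with $v^*$ is the dual of a facial cycle of $G$ (the cycle bounding the face of $G$ in which $v^*$ sits), and a general cycle of $G$ dualizes to a $\mathbb{F}_2$-sum of such stars, that is, to the edge boundary $\partial(S)$ of some vertex set $S$ in $G^*$.

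\medskip

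With this dictionary in place the lemma becomes a purely parity statement. The number $|E(Z)\cap X|$ equals the number of dual edges of $Z^*$ lying in $X^*$, i.e.\ the number of edges of $H_X$ lying in the cut $Z^* = \partial(S)$. For the forward direction, I would show that if every vertex of $H_X$ has even degree, then every cut $\partial(S)$ of $G^*$ meets $H_X$ in an even number of edges—this is just the handshake identity $\sum_{v^*\in S}\deg_{H_X}(v^*) = 2\,|E(H_X[S])| + |\partial_{H_X}(S)|$, forcing $|\partial_{H_X}(S)|$ even—so no cycle of $G$ has an odd intersection with $X$ and $F^2_X$ is orientable. For the converse, if some vertex $w^*$ of $H_X$ has odd degree, then taking $S=\{w^*\}$ gives a cut $\partial(\{w^*\})$ meeting $H_X$ in an odd number of edges; this cut dualizes to the facial cycle $C$ of $G$ bounding the face containing $w^*$, and that cycle then carries an odd number of twisted edges, so $F^2_X$ is nonorientable.

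\medskip

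The main obstacle I anticipate is making the cycle/cut duality airtight in the signed, non-facial setting: the stated topological criterion speaks of an \emph{arbitrary} cycle with an odd number of twisted edges, whereas the clean dual statement is most transparent for the facial cycles (which dualize to single-vertex stars). The resolution is to argue at the level of $\mathbb{F}_2$ vector spaces—since the facial cycles generate the full cycle space of a spherically embedded graph, and the vertex stars generate the cut space of the dual, parity of $|E(Z)\cap X|$ is determined additively and it suffices to test on a generating set. Thus checking odd intersection over all cycles reduces to checking it over the facial cycles, each of which is handled by a single vertex-degree parity in $H_X$, completing both directions.
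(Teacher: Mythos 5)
Your proof is correct and follows essentially the same route as the paper: both directions reduce to the planar duality between cycles of $G$ and edge cuts of $G^*$ restricted to $H_X$, combined with the handshaking parity argument. The only cosmetic difference is that you phrase the ``nonorientable $\Rightarrow$ odd-degree vertex'' direction in contrapositive form and add some $\mathbb{F}_2$ cycle-space language, whereas the paper argues directly from an arbitrary odd cycle to an odd edge cut of $H_X$.
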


\begin{proof}
If there is a vertex of odd degree in $H_X$
then the facial cycle, denoted by $C$,
corresponding to the vertex contains an odd number of edges in $X$.
Thus, $f_X(C)$ contains an odd number of twisted edges.

Suppose that $F^2_X$ is nonorientable.
Then, there is a cycle in $f_X(G)$ containing an odd number of twisted edges,
that is, there is a cycle in $G$
containing an odd number of edges in $X$,
denoted by $C'$.
Since $C'$ separates the sphere into two regions,
the edges dual to $X\cap E(C')$ form an edge-cut of $H_X$,
whose cardinality is odd.
Thus, there is a vertex of odd degree in $H_X$
by the handshaking lemma.
\end{proof}

\subsection{On the projective-plane}

\begin{lemma}\label{lempro}
For a given subset $X$ of $E(G)$,
$F^2_X$ is homeomorphic to the projective-plane if and only if
$H_X$ is isomorphic to $K_2$ or $K_4$.
\end{lemma}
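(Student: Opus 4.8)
The plan is to prove both implications using Lemmas \ref{lem1} and \ref{lem2} together with elementary surface topology, leaving a direct face count only for the one genuinely computational case. The guiding observation is that the projective-plane is the unique nonorientable surface of Euler characteristic $1$, so it suffices to control orientability (through Lemma \ref{lem2}) and the number $F_X$ of faces of $f_X(G)$, since $\chi(F^2_X)=|V(G)|-|E(G)|+F_X$ equals $1$ precisely when $F_X$ is one less than the number $F$ of faces of the spherical embedding. For the ``if'' direction with $H_X\cong K_2$, the set $X$ is a single edge, and twisting it merges its two incident faces into a single facial walk; this lowers the face count by exactly one and creates an odd-degree vertex, so $F^2_X$ is the projective-plane. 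When $H_X\cong K_4$ I would instead trace the facial walks of the embedding scheme $(\rho,\lambda)$ directly and verify $F_X=F-1$.

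For the converse, suppose $F^2_X$ is the projective-plane. Since it is nonorientable, Lemma \ref{lem2} yields a vertex of odd degree in $H_X$; in particular $X\neq\emptyset$ and $H_X$ has at least two vertices. Every vertex of $H_X$ is a facial cycle $C$ of $G$ carrying a twisted edge, so by Lemma \ref{lem1} its image $f_X(C)$ is a non-separating simple closed curve. The topological input I would use is that two non-separating simple closed curves on the projective-plane can never be made disjoint: cutting along one produces a disc, inside which the other would bound a disc and hence be separating. Now if two vertices of $H_X$ were nonadjacent, the corresponding facial cycles would share no twisted edge, so by the crossing analysis of Section \ref{sec1} their images would not cross and could be perturbed to be disjoint, a contradiction. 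Hence every two vertices of $H_X$ are adjacent, i.e.\ $H_X\cong K_n$ with $n=|V(H_X)|$.

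To pin down $n$ I would invoke two constraints. First, $H_X$ is a subgraph of the dual of $G$, which is planar because $G$ is $3$-connected and planar; as $K_5$ is nonplanar, this forces $n\leq 4$. Second, the odd-degree vertex supplied by Lemma \ref{lem2} has degree $n-1$, so $n-1$ is odd and $n$ is even. Together these give $n\in\{2,4\}$, that is, $H_X\cong K_2$ or $H_X\cong K_4$, completing this direction.

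The step I expect to be the main obstacle is the face count for $H_X\cong K_4$ in the ``if'' direction. Unlike the single-edge case, the six twisted edges can be distributed among the four faces $A,B,C,D$ in more than one local pattern (for example, according to how many of the four triangles of the abstract $K_4$ are actually faces of the dual of $G$), and I must rule out that the resulting surface is a nonorientable surface of Euler characteristic $0$ or less rather than the projective-plane. I expect to resolve this by showing that $3$-connectivity and $3$-regularity sharply limit these local configurations, so that the evaluation of $F_X$ reduces to finitely many cases, each giving $F_X=F-1$; combined with nonorientability this yields $\chi(F^2_X)=1$ and hence the projective-plane.
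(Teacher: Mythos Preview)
Your forward direction is essentially the paper's: both argue that any two non-separating cycles on the projective-plane must cross, hence any two vertices of $H_X$ are adjacent, so $H_X$ is complete; planarity gives $n\le 4$ and Lemma~\ref{lem2} excludes $K_3$. (One small point: two facial cycles of $G$ could share an \emph{untwisted} edge, in which case their images touch without crossing. This is harmless because in a $3$-connected $3$-regular planar graph two facial cycles share at most one edge, and tangency along an untwisted edge can be perturbed away; the paper established this in Section~\ref{sec1}.)

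The difference is in the backward direction, and here your anticipated ``main obstacle'' is a non-issue. You worry that the six twisted edges in the $K_4$ case can sit in the four faces ``in more than one local pattern''. They cannot. The graph $H_X\cong K_4$ is a subgraph of the planar dual $G^*$ and therefore inherits a planar embedding; since $K_4$ is $3$-connected, Whitney's theorem forces this embedding to be the unique spherical one. Consequently the cyclic arrangement of the six edges of $X$ around the four faces $A,B,C,D$ is completely determined (up to reflection), and there is a \emph{single} configuration to analyse. The paper exploits exactly this: it says ``$K_4$ is uniquely embeddable on the sphere and hence the structure is determined uniquely'', draws that one structure (Fig.~\ref{proHX}), and then simply twists the six bold edges to exhibit the projective-planar embedding (Fig.~\ref{pro}) directly, bypassing any Euler-characteristic bookkeeping.

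Your face-counting approach would also work once you make this uniqueness observation: with the configuration fixed, tracing the facial walks of $(\rho,\lambda)$ is a single routine computation showing that the four faces $A,B,C,D$ merge into three, so $F_X=F-1$ and $\chi=1$. Either route is fine; the paper's explicit construction has the advantage of simultaneously proving Theorem~\ref{thpro} by displaying the re-embedding structure, whereas your Euler-characteristic argument is more self-contained but yields only the surface, not the picture.
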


\begin{proof}
Suppose that $F^2_X$ is homeomorphic to the projective-plane.
Any two non-separating simple closed curves on the projective-plane cross at least once.

Let $C$ and $C'$ be any two facial cycles in $G$
each of which has an edge of $X$.
By Lemma \ref{lem1},
$f_X(C)$ and $f_X(C')$ are non-separating cycles on $F^2_X$ and cross at most once.
Thus, $f_X(C)$ and $f_X(C')$ cross exactly once
and hence $C$ and $C'$ have exactly one common edge in $X$.
It implies that any two vertices in $H_X$ are adjacent to each other,
that is, $H_X$ must be a complete graph.
Since $H_X$ is planar and induced by edges,
$H_X$ must be isomorphic to $K_2$, $K_3$ or $K_4$.
However, $H_X$ is not isomorphic to $K_3$ by Lemma \ref{lem2}.

If $H_X$ is isomorphic to $K_2$ or $K_4$
then $G$ must have one of the structures shown in Fig.~\ref{proHX} ($H_X$ is drawn by squares and dashed lines).
Note that $K_4$ is uniquely embeddable on the sphere and hence the structure is determined uniquely.
In Fig.~\ref{proHX},
we represent edges in $X$ by bold lines
and each component of $G-X$ by shaded area together with some vertices,
each of which is an end vertex of an edge in $X$, on its boundary.
Note that $X$ does not have to be a matching,
that is, two edges in $X$ may have a common end vertex.

\begin{figure}[htbp]
\begin{center}
\includegraphics[width=100mm]{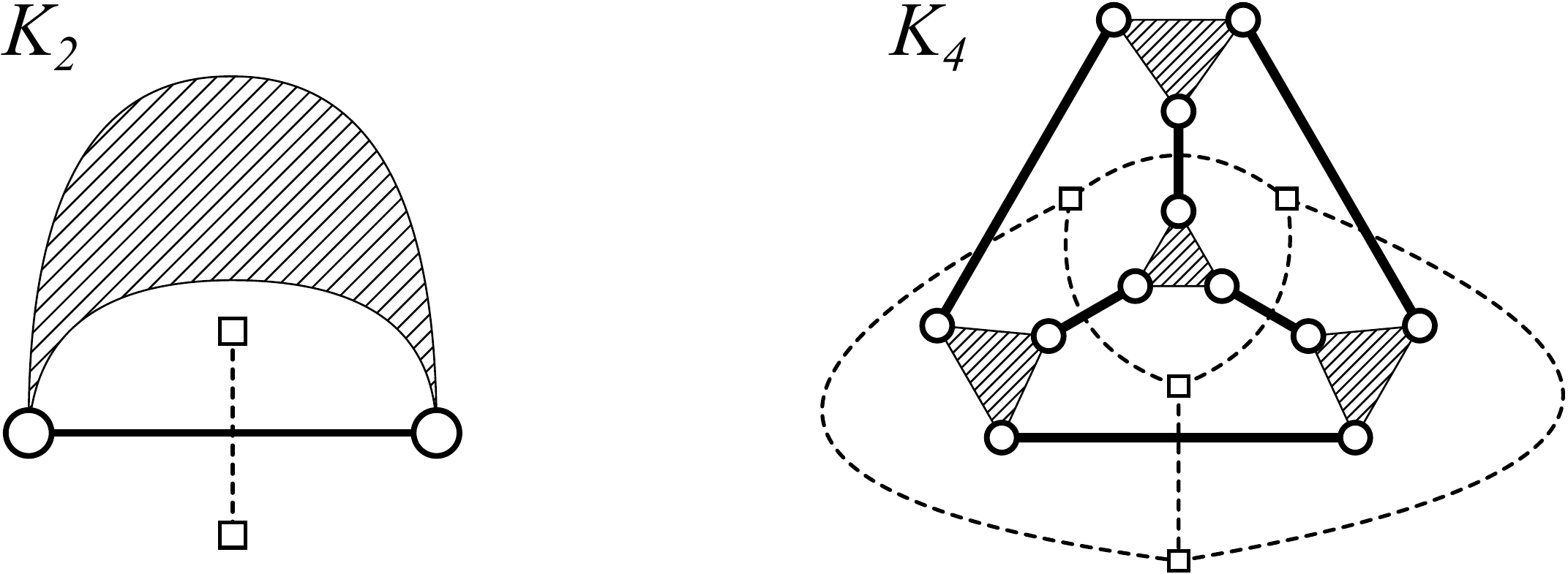}
\end{center}
\caption{Two structures of $G$ with $H_X$}
\label{proHX}
\end{figure}

In the situation shown in Fig.~\ref{proHX},
by twisting all edges of $X$,
we obtain the re-embedding $f_X(G)$ into the projective-plane shown in Fig.~\ref{pro}.
\end{proof}

\begin{proof}[of Theorem \ref{thpro}]
Let $G$ be a 3-connected 3-regular planar graph.
Any embedding of $G$ on a non-spherical surface can be represented by $f_X(G)$ with a suitable non-empty subset $X$ of $E(G)$.
By lemma \ref{lempro},
if $f_X(G)$ is a re-embedding of $G$ into the projective-plane then it has one of the structures shown in Fig.~\ref{pro}.

Conversely, it is easy to see that
if a 3-connected 3-regular graph has one of the structures shown in Fig.~\ref{pro},
then it can be embedded on the sphere
so that it has one of the structures shown in Fig.~\ref{proHX}. 
\end{proof}

\subsection{On the torus}

\begin{lemma}\label{lemtorus}
For a given subset $X$ of $E(G)$,
$F^2_X$ is homeomorphic to the torus if and only if
$H_X$ is isomorphic to $K_{2,2,2}$, $K_{2,2m}$, or $K_{1,1,2m-1}$ for some positive integer $m$.
\end{lemma}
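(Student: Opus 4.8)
The plan is to mirror the proof of Lemma~\ref{lempro}, replacing the intersection theory of the projective-plane by that of the torus. For the forward direction, I would assume $F^2_X$ is homeomorphic to the torus. Each vertex of $H_X$ is a facial cycle $C$ of $G$ carrying an edge of $X$, and by Lemma~\ref{lem1} the curve $f_X(C)$ is a non-separating simple closed curve on $F^2_X$; hence it represents a primitive class in $H_1(F^2_X)\cong\mathbb{Z}^2$, which I will record as its \emph{slope}. As noted before Lemma~\ref{lem1}, any two of these curves cross at most once, and $f_X(C)$ and $f_X(C')$ cross exactly once precisely when the corresponding vertices are adjacent in $H_X$ (they share a common edge of $X$, and crossings never occur at a vertex since $G$ is $3$-regular).

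The heart of the forward direction is to read off the isomorphism type of $H_X$ from the intersection form on the torus. Using that the algebraic intersection number of two primitive classes $(a,b)$ and $(c,d)$ equals $ad-bc$ and is a signed count of crossings, two curves that cross exactly once satisfy $|ad-bc|=1$ (so their slopes differ), while two curves that never cross satisfy $ad-bc=0$ (so their slopes coincide). Thus ``having the same slope'' is exactly ``being non-adjacent'' in $H_X$; since equality of slopes is an equivalence relation, the complement of $H_X$ is a disjoint union of cliques, i.e.\ $H_X$ is complete multipartite with one part for each slope that occurs. Distinct slopes are pairwise unimodular, and an $SL_2(\mathbb{Z})$ normalisation (send two of them to $(1,0)$ and $(0,1)$; any third slope unimodular with both must be $(\pm1,\pm1)$, and the two slopes of that form are not unimodular with each other) shows that at most three slopes can occur. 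Hence $H_X$ has at most three parts, and at least two, because $F^2_X\neq S^2$ forces $X\neq\emptyset$ and therefore an edge in $H_X$.

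It then remains to list the admissible complete multipartite graphs. Since $H_X$ is an induced subgraph of the planar dual $G^\ast$ it is planar, and since the torus is orientable, Lemma~\ref{lem2} forces every vertex of $H_X$ to have even degree. For two parts, planarity gives $K_{a,b}$ with $\min\{a,b\}\le 2$, and the even-degree condition then leaves only $K_{2,2m}$; for three parts, the planar complete tripartite graphs are $K_{1,1,n}$, $K_{1,2,2}$ and $K_{2,2,2}$, of which the even-degree condition keeps only $K_{2,2,2}$ and $K_{1,1,2m-1}$ (the middle graph has vertices of degree $3$). This produces exactly the three families in the statement.

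For the converse I would argue as in Lemma~\ref{lempro}: each of $K_{2,2,2}$, $K_{2,2m}$ and $K_{1,1,2m-1}$ has a prescribed embedding as an induced subgraph of the uniquely embedded dual $G^\ast$, which fixes both the position of the edges of $X$ and the components of $G-X$; twisting all edges of $X$ then produces the explicit re-embeddings drawn in Fig.~\ref{torus}. Orientability (even degrees, via Lemma~\ref{lem2}) already excludes the Klein bottle, and inspecting the resulting fundamental polygon confirms that the identification space is the torus, so $F^2_X$ is homeomorphic to the torus. The step I expect to be the main obstacle is the counting in the forward direction---pinning down that at most three primitive slopes on the torus can be pairwise unimodular, which is exactly what separates the torus (at most three parts, whence $K_{2,2,2}$) from the projective-plane case ($K_4$)---together with verifying, in the converse, that each constructed identification space really is the torus rather than some other surface of Euler characteristic $0$.
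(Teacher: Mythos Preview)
Your proof is correct and follows the same skeleton as the paper's: show that $H_X$ is complete multipartite by reading off the intersection behaviour of non-separating curves on the torus, then filter the candidates by planarity of $H_X$ and the even-degree condition of Lemma~\ref{lem2}; the converse is handled by checking the explicit pictures. The one place you diverge is your $SL_2(\mathbb{Z})$ argument bounding the number of pairwise unimodular slopes (hence the number of parts) to three. The paper skips this entirely: it simply lists \emph{all} planar complete multipartite graphs---including the four-partite $K_4$ and $K_{1,1,1,2}$---and observes that the parity condition from Lemma~\ref{lem2} already eliminates every case outside the three families. So the step you flagged as ``the main obstacle'' is in fact dispensable; planarity together with even degrees does all the work, and your homological detour, while correct, is more machinery than the argument needs.
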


\begin{proof}
Suppose that $F^2_X$ is homeomorphic to the torus.
For two simple closed curves crossing at most once on the torus,
they cross if and only if they are not homotopic.

Let $C$ and $C'$ be any two facial cycles in $G$
each of which has an edge of $X$.
By Lemma \ref{lem1},
$f_X(C)$ and $f_X(C')$ are non-separating cycles on $F^2_X$ and cross at most once.
Then, $f_X(C)$ and $f_X(C')$ are homotopic
if and only if they do not cross,
that is, $C$ and $C'$ have no common edge in $X$,
and hence two vertices in $H_X$ corresponding to them are not adjacent.
It implies that $H_X$ must be a complete multipartite graph
and each partite set corresponds to a non-null homotopy class on the torus.

It is easy to check that any planar complete miltipartite graph is isomorphic to one of the $7$ graphs $K_{1,1,1,2}$, $K_{1,1,1,1}=K_4$, $K_{2,2,2},K_{1,2,2}$, $
K_{1,1,n}$, $K_{2,n}$ and $K_{1,n}$ for some natural number $n$.
By Lemma \ref{lem2}, any vertex of $H_X$ has even degree.
Then, as $H_X$ is planar,
$H_X$ is isomorphic to $K_{2,2,2}$, or $K_{2,2m}$ or $K_{1,1,2m-1}$ for some positive integer $m$.

Conversely, if $H_X$ is isomorphic to $K_{2,2,2},K_{2,2m}$ or $K_{1,1,2m-1}$,
then $G$ must have one of the structure shown in Fig.~\ref{torusHX}.
Note that all of $K_{2,2,2},K_{2,2m}$ and $K_{1,1,2m-1}$ is uniquely embeddable on the sphere if we neglect the labels of their vertices.

\begin{figure}[htbp]
\begin{center}
\includegraphics[width=120mm]{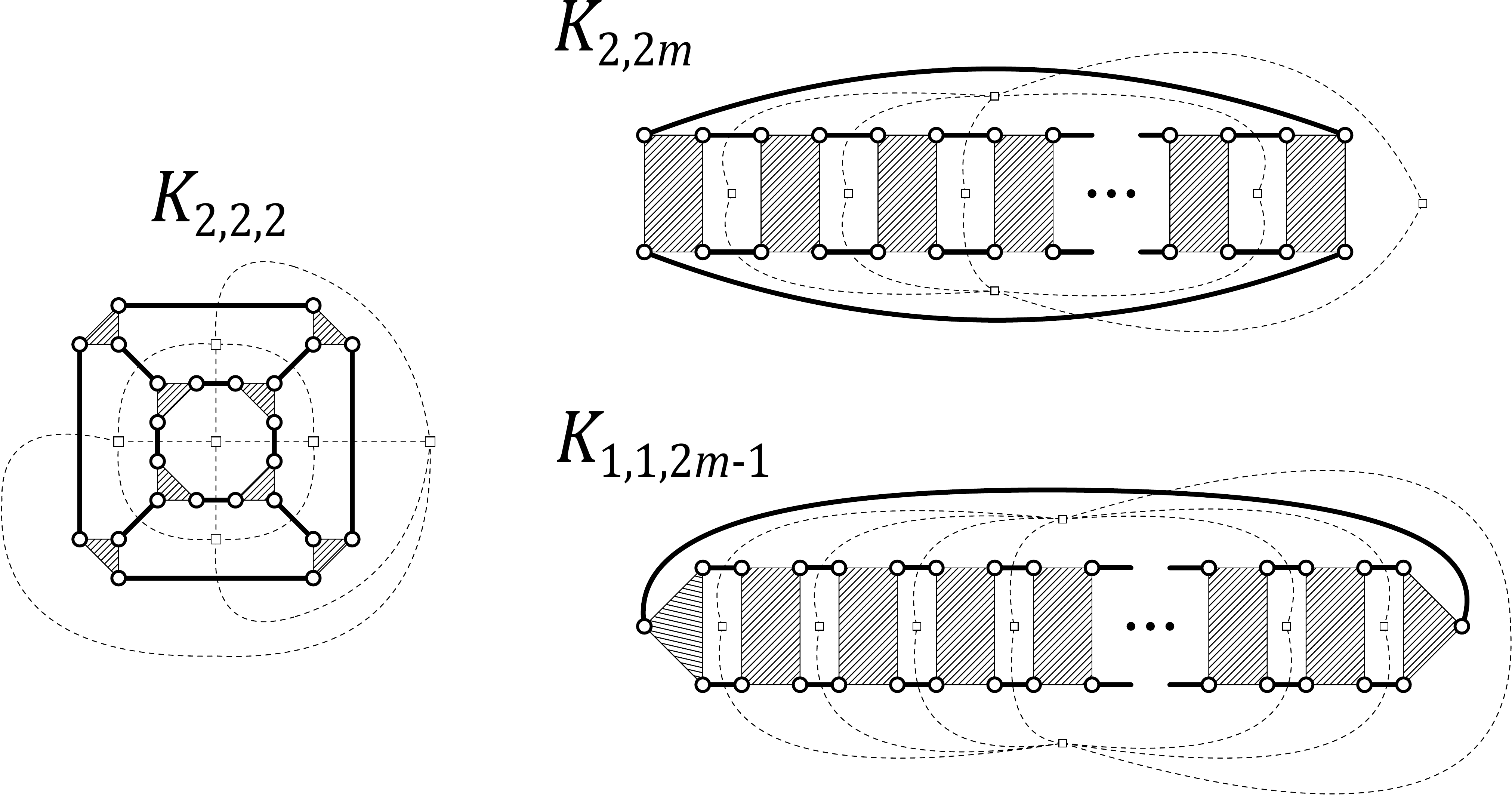}
\end{center}
\caption{Three structures of $G$ with $H_X$}
\label{torusHX}
\end{figure}

In the situation shown in Fig.~\ref{torusHX},
by twisting all edges of $X$,
we obtain the re-embedding $f_X(G)$ into the torus shown in Fig.~\ref{torus}.

\end{proof}

\begin{proof}[of Theorem \ref{thtorus}]
Like Theorem \ref{thpro},
this theorem follows immediately from the key lemma; Lemma \ref{lemtorus}. 
\end{proof}

\subsection{On the Klein bottle}

A simple closed curve on a surface is said to be \emph{$2$-sided}
if it divides its annular neighbourhood into two parts,
and to be \emph{$1$-sided} otherwise.

\begin{lemma}\label{lemKle}
For a given subset $X$ of $E(G)$,
$F^2_X$ is homeomorphic to the Klein bottle if and only if
$H_X$ is isomorphic to $K_{2,2m-1}$ or $K_{1,1,2m}$ for some positive integer $m$, or one of the six graphs $A_1$ to $A_6$ shown in Fig.~$\ref{HX}$.
\end{lemma}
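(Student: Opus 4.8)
The plan is to mirror the proofs of Lemmas~\ref{lempro} and~\ref{lemtorus}, substituting the intersection theory of the torus by that of the Klein bottle. So suppose first that $F^2_X$ is homeomorphic to the Klein bottle. Exactly as before, Lemma~\ref{lem1} makes $f_X(C)$ a non-separating, hence essential, simple closed curve for every facial cycle $C$ with an edge in $X$, and any two such curves cross at most once. The key input is the $\mathrm{GF}(2)$-intersection form on $H_1(F^2_X;\mathrm{GF}(2))\cong\mathrm{GF}(2)^2$: its three nonzero classes comprise one isotropic class $\alpha$, carried by the $2$-sided non-separating curves, and two anisotropic classes $\beta$ and $\gamma=\alpha+\beta$, carried by the two kinds of $1$-sided curve, with $\alpha\cdot\alpha=0$, $\beta\cdot\beta=\gamma\cdot\gamma=1$, $\alpha\cdot\beta=\alpha\cdot\gamma=1$ and $\beta\cdot\gamma=0$. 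Since our curves cross at most once, two of them are adjacent in $H_X$ precisely when their classes meet oddly. Reading this off the form, I would conclude that $H_X\cong\overline{K_p}+(K_s\sqcup K_t)$: the $\alpha$-curves form an independent set of size $p$, the $\beta$- and $\gamma$-curves form cliques of sizes $s$ and $t$, each $\alpha$-vertex is joined to all of $K_s\cup K_t$, and there is no edge between the two cliques. This two-clique shape is the Klein-bottle analogue of the complete multipartite graphs that arose on the torus.

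Next I would pin down the triple $(p,s,t)$. A curve is $1$-sided exactly when its facial cycle carries an odd number of twisted edges, i.e.\ when the corresponding vertex has odd degree in $H_X$; matching this against the classes ($\alpha$ is $2$-sided, $\beta$ and $\gamma$ are $1$-sided) forces $p\equiv s\equiv t\pmod 2$, with the evident convention for empty parts. Planarity of $H_X$ (a subgraph of the spherical dual $G^*$) bounds the clique sizes whenever $\alpha$-vertices are present. Finally I would compute the Euler genus: one checks that $2-\chi(F^2_X)$ equals the $\mathrm{GF}(2)$-rank of the intersection matrix $M=A(H_X)+D$, where $D$ records the degree parities on the diagonal. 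For the present structure $M$ is built from all-ones blocks, and a direct computation gives $\operatorname{rank}M=2$ in every case except that of a single clique. Demanding $\operatorname{rank}M=2$ (so $\chi=0$) together with non-orientability (Lemma~\ref{lem2}) then singles out the Klein bottle from the sphere, the projective-plane and the torus. Running the resulting finite case analysis produces the two infinite families $(p,s,t)=(2m-1,1,1)$ and $(2m,2,0)$, that is $K_{2,2m-1}$ and $K_{1,1,2m}$, together with exactly the six sporadic triples $(0,2,2),(0,4,2),(0,4,4),(1,3,1),(1,3,3),(2,2,2)$; these are $A_1,\dots,A_6$ of Fig.~\ref{HX}.

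For the converse I would observe that each listed graph is uniquely embeddable on the sphere up to relabelling, reconstruct the corresponding pair $(G,X)$ in the manner of Fig.~\ref{proHX}, and twist all edges of $X$ to verify directly that $F^2_X$ is the Klein bottle; in particular the three disconnected graphs $K_2\sqcup K_2$, $K_2\sqcup K_4$, $K_4\sqcup K_4$ merely superpose two far-apart projective-plane structures of Lemma~\ref{lempro}, realizing $\mathbb{RP}^2\#\mathbb{RP}^2$. I expect the principal obstacle to be the completeness of the enumeration rather than any single computation. Unlike the torus, where ``cross if and only if non-homotopic'' immediately yields a complete multipartite $H_X$, here the two $1$-sided classes create the two-clique structure and, more awkwardly, genuinely disconnected configurations, so the hard part is proving that the parity, planarity and rank-$2$ conditions admit exactly these finitely many sporadic solutions and no near-misses, and that every one of them is actually realizable by twisting some $3$-connected $3$-regular planar graph.
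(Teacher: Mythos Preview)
Your argument is correct and traces the same path as the paper's: both sort the essential facial curves into the three nonzero $\mathrm{GF}(2)$-homology classes on the Klein bottle (the paper phrases this as ``two $1$-sided and one $2$-sided class up to homotopy''), deduce that $H_X$ is the join of an independent set of even-degree vertices with a disjoint union of at most two odd-degree cliques, and then enumerate the planar possibilities under the parity constraints. Your parametrisation $(p,s,t)$ and the paper's split into $V_{\mathrm{even}}$ and one or two components of $V_{\mathrm{odd}}$ are the same case analysis in different notation, and your sporadic list $(0,2,2),(0,4,2),(0,4,4),(1,3,1),(1,3,3),(2,2,2)$ matches $A_1,\dots,A_6$ exactly.

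The one point of genuine difference is how the wrong-genus degenerations are excluded. The paper removes $K_2$ and $K_4$ by quoting Lemma~\ref{lempro} (they yield the projective plane, contradicting the hypothesis) and handles the converse by the explicit pictures of Fig.~\ref{KleHX}; you instead propose the uniform criterion $\operatorname{rank}_{\mathrm{GF}(2)}\bigl(A(H_X)+D\bigr)=2$. That formula is valid here because the twisted facial cycles span $H_1(F^2_X;\mathrm{GF}(2))$ (the untwisted faces are still disks, and the spherical facial cycles generate the cycle space of $G$), so $M$ is a Gram matrix for the full intersection form and its rank equals the Euler genus. This is a tidy replacement for the picture-by-picture verification, but it does not alter the structure of the proof.
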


\begin{proof}
Suppose that $F^2_X$ is homeomorphic to the Klein bottle.
There are exactly two mutually disjoint non-separating simple closed $1$-sided curves 
and exactly one non-separating $2$-sided curve
on the Klein bottle up to homotopy.

Let $C$ and $C'$ be any two facial cycles in $G$
each of which has an edge of $X$.
Then, $f_X(C)$ and $f_X(C')$ are non-separating cycles on $F^2_X$
and cross at most once.
We first assume that both $f_X(C)$ and $f_X(C')$ are $1$-sided.
Then, $f_X(C)$ and $f_X(C')$ cross
if and only if they are homotopic.
Second, we assume that one of $f_X(C)$ and $f_X(C')$ is $1$-sided and the other is $2$-sided.
Then, they cross.
Third, we assume that both $f_X(C)$ and $f_X(C')$ are $2$-sided.
Then they are homotopic and hence do not cross.

The vertex of $H_X$ corresponding to $C$ has odd degree if and only if $f_X(C)$ is $1$-sided.
Thus, the facts mentioned in the last paragraph imply that $H_X$ has the following conditions.
(1) The vertices of odd degree in $H_X$
induce a graph having at most two components
each of which is isomorphic to a complete graph.
(2) Any vertex of even degree and any vertex of odd degree are adjacent.
(3) The vertices of even degree in $H_X$
are independent,
that is, any pair of such vertices are not adjacent.

Let $V_{odd}$ (resp. $V_{even}$) be the set of vertices of odd (resp. even) degree in $H_X$.
Since $V_{even}$ is a independent set and any vertex of $V_{even}$ is adjacent to each vertex of $V_{odd}$,
$|V_{odd}|$ is even.
\medskip

\textbf{Case 1}: $V_{odd}$ induces a complete graph $K_m$.
As $H_X$ is planar, $m=2$ or $4$.

\textbf{Subcase 1a}: $m=2$.
It is easy to see that $|V_{even}|$ is even.
Then, $H_X$ is isomorphic to $K_{1,1,2k}$ with some non-negative integer $k$.
However, if $H_X$ is isomorphic to $K_{1,1,0}=K_2$
then $F^2_X$ is homeomorphic to the projective-plane by Lemma \ref{lempro}.
Then, $k\geq 1$.

\textbf{Subcase 1b}: $m=4$.
If there is at least one vertex in $V_{even}$
then $H_X$ is not planar
since it contains $K_5$ as a subgraph.
Moreover, if $V_{even}$ is empty,
then $H_X$ is isomorphic to $K_4$
and hence $F^2_X$ is homeomorphic to the projective-plane by Lemma \ref{lempro}.
Therefore, $m\neq 4$.

\textbf{Case 2}: $V_{odd}$ induces two disjoint complete graphs $K_m$ and $K_n$.
Then, we have $m+n=2,4,6,8$.

\textbf{Subcase 2a}: $m+n=2$,
that is, $m=n=1$.
In this situation,
$H_X$ is isomorphic to $K_{2,2k-1}$ with some positive integer $k$.

\textbf{Subcase 2b}: $m+n=4$,
that is, $m=n=2$ or $m=1,n=3$.
Suppose that $m=n=2$.
If $|V_{even}|\geq 3$
then $H_X$ is not planar 
since it contains $K_{3,3}$ as a subgraph.
If $|V_{even}|=1$
then each vertex of $H_X$ has even degree,
which contradicts Lemma \ref{lem2}.
If $|V_{even}|=0$ or $2$
then $H_X$ corresponds to $A_1$ or $A_6$, respectively.

Suppose that $m=1$ and $n=3$.
If $|V_{even}|\geq 3$
then $H_X$ is not planar 
since it contains $K_{3,3}$ as a subgraph.
If $|V_{even}|=0,2$
then each vertex of $H_X$ has even degree,
which contradicts Lemma \ref{lem2}.
If $|V_{even}|=1$
then $H_X$ corresponds to $A_4$.

\textbf{Subcase 2c}: $m+n=6$,
that is, $m=n=3$ or $m=2,n=4$.
Suppose that $m=n=3$.
If $|V_{even}|\geq 3$
then $H_X$ is not planar 
since it contains $K_{3,3}$ as a subgraph.
If $|V_{even}|=0,2$
then each vertex of $H_X$ has even degree,
which contradicts Lemma \ref{lem2}.
If $|V_{even}|=1$
then $H_X$ corresponds to $A_5$.

Suppose that $m=2,n=4$.
If $|V_{even}|\geq 1$
then $H_X$ is not planar
since it contains $K_5$ as a subgraph.
If $|V_{even}|=0$
then $H_X$ corresponds to $A_2$.

\textbf{Subcase 2d}: $m+n=8$,
that is, $m=n=4$.
If $|V_{even}|\geq 1$
then $H_X$ is not planar 
since it contains $K_5$ as a subgraph.
If $|V_{even}|=0$
then $H_X$ corresponds to $A_3$.
\medskip

According to the above results,
$H_X$ is isomorphic to $K_{2,2m-1}$ or $K_{1,1,2m}$ for some positive integer $m$, or $H_X$ is isomorphic to one of the six graphs $A_1$ to $A_6$.

Conversely, if $H_X$ is isomorphic to $K_{2,2m-1}$ or $K_{1,1,2m}$ for some positive integer $m$, or $H_X$ is isomorphic to one of the six graphs $A_1$ to $A_6$,
then $G$ must have one of the structure shown in Fig.~\ref{KleHX}.
Note that all graphs shown in Fig.~\ref{HX} are uniquely embeddable on the sphere
if we neglect the labels of their vertices.

\begin{figure}[htb]
\begin{center}
\includegraphics[width=110mm]{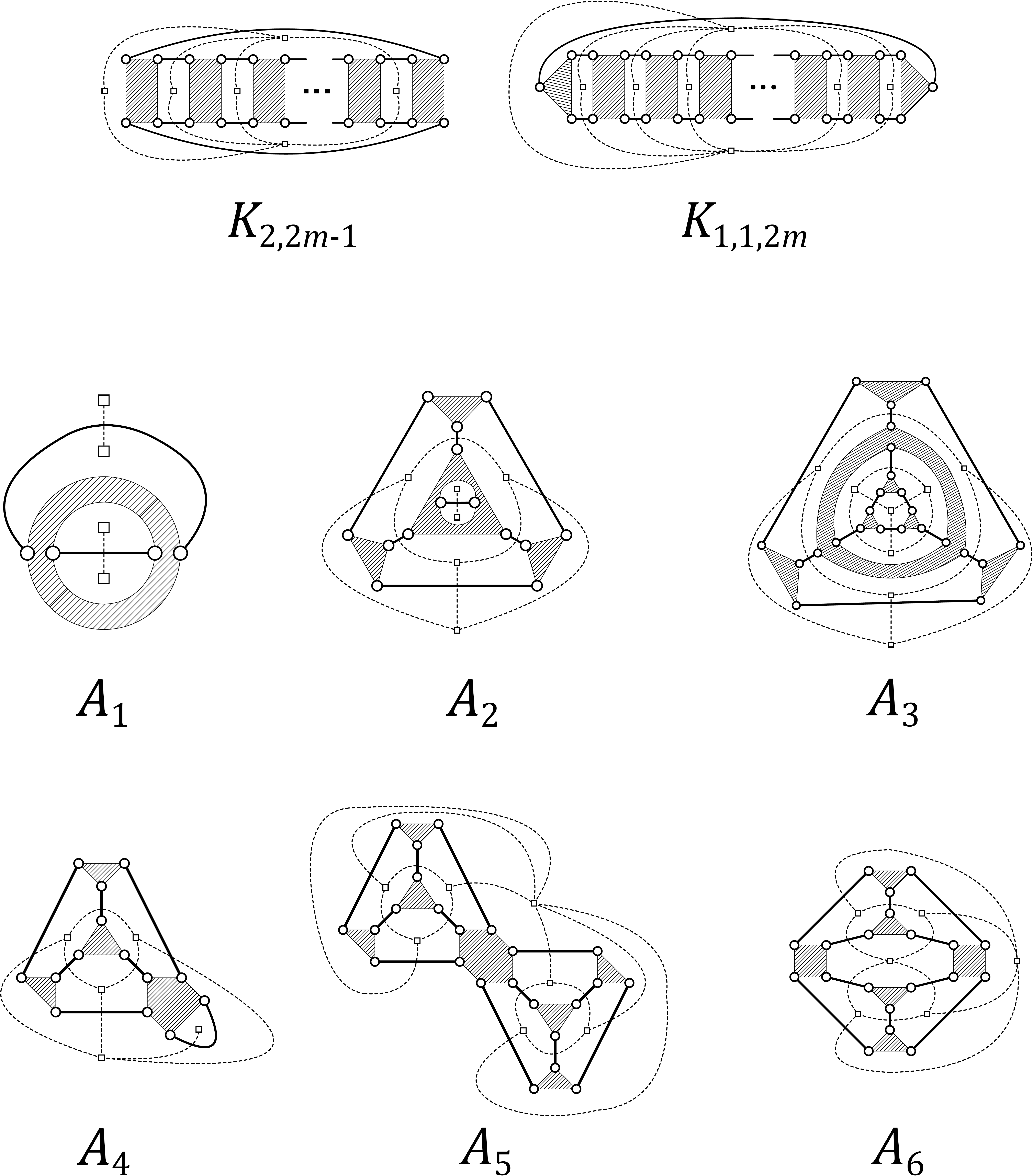}
\end{center}
\caption{Eight structures of $G$ with $H_X$}
\label{KleHX}
\end{figure}

In the situation shown in Fig.~\ref{KleHX},
by twisting all edges of $X$,
we obtain the re-embedding $f_X(G)$ into the Klein bottle shown in Fig.~\ref{Kle}.
\end{proof}

\begin{proof}[of Theorem \ref{thKle}]
Like Theorem \ref{thpro},
this theorem follows immediately from the key lemma; Lemma \ref{lemKle}.
\end{proof}

\subsection{Proof of Theorems}

Theorems \ref{th1}, \ref{th0} and \ref{th2} immediately follow from Lemmas \ref{lempro}, \ref{lemtorus} and \ref{lemKle}, respectively.

\begin{proof}[Proof of Theorems \ref{th1}, \ref{th0} and \ref{th2}]
Let $G$ be a $3$-connected $3$-regular planar graph.
Any embedding of $G$ on any surface is equivalent to an embedding $f_X(G)$ associated with a suitable subset $X$ of $E(G)$.
Moreover, such $X$ is unique.
Thus, Lemmas \ref{lempro}, \ref{lemtorus} and \ref{lemKle} imply Theorems \ref{th1}, \ref{th0} and \ref{th2}, respectively.
\end{proof}

\section{Inequivalent embeddings}\label{sec3}

In this section, we first give explicit bounds for the number of inequivalent embeddings of $G$ on each of the projective-plane, the torus and the Klein bottle.
After that, we propose algorithms for enumerating and counting these embeddings.

\subsection{The number of inequivalent embeddings}

Based on Theorems \ref{th1}, \ref{th0} and \ref{th2},
we show the following three results.

\begin{theorem}\label{numpro}
A $3$-connected $3$-regular planar graph with $n$ vertices has at least $\frac{3}{2}n$ and at most $2n-1$ inequivalent embeddings on the projective-plane.
\end{theorem}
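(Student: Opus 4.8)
The plan is to translate the statement about inequivalent embeddings on the projective-plane into a counting problem about subgraphs of the dual graph $G^*$ embedded on the sphere, using Theorem~\ref{th1} (equivalently Lemma~\ref{lempro}). By that correspondence, the number of inequivalent embeddings of $G$ on the projective-plane equals the number of subgraphs of $G^*$ isomorphic to $K_2$ or $K_4$. Since these are \emph{induced-by-edges} subgraphs (i.e.\ $H_X$ is induced by the edge set $X$), a $K_2$-subgraph is simply an edge of $G^*$, and a $K_4$-subgraph corresponds to a face of $G^*$ bounded by a triangle whose three vertices are mutually adjacent, i.e.\ a triangular face whose closed neighbourhood forms a $K_4$. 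Because $G$ is $3$-connected, $3$-regular and planar with $n$ vertices, its dual $G^*$ is a $3$-connected planar triangulation; by Euler's formula $G^*$ has $\tfrac{n}{2}+2$ vertices, $\tfrac{3n}{2}$ edges, and $n$ faces, each a triangle.

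First I would count the $K_2$-subgraphs: there is exactly one for each edge of $G^*$, giving $\tfrac{3n}{2}$ of them. This alone establishes the lower bound $\tfrac{3}{2}n$, since every edge of $G^*$ yields a projective-planar embedding and the $K_4$-count is non-negative. Next I would count the $K_4$-subgraphs. Each such subgraph sits on four vertices forming a complete graph; since $G^*$ is a planar triangulation, each $K_4$ is bounded by a triangular face together with a fourth vertex adjacent to all three corners. I would set up a count of triangular faces $T$ of $G^*$ all three of whose vertices are pairwise adjacent with a common fourth neighbour — equivalently, count the copies of $K_4$ directly via the triangulation structure. The upper bound then amounts to bounding the total $K_2$-count plus $K_4$-count by $2n-1$.

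The main obstacle will be obtaining the sharp upper bound $2n-1$ rather than a weaker estimate. Adding the $\tfrac{3n}{2}$ edges to the number of $K_4$'s, I must show the number of $K_4$-subgraphs is at most $\tfrac{n}{2}-1$. The natural route is to observe that distinct $K_4$'s in a planar triangulation cannot share a triangular face (two $K_4$'s sharing a face would force a non-planar configuration, or would coincide), so each $K_4$ can be charged to its triangular faces, and a careful double-counting of faces against the $n$ triangular faces of $G^*$ — using that each $K_4$ uses four of them while ensuring faces are not over-counted — yields the bound $\tfrac{n}{2}-1$. Making this face-charging argument tight, and verifying the extremal configurations that force the $-1$ correction term (plausibly tied to the global planar structure via Euler's formula rather than a purely local count), is the delicate part. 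I would close by exhibiting graphs meeting each bound to confirm sharpness, and reconcile the lower-bound case (where no $K_4$ occurs) with the upper-bound case (where $K_4$'s are maximally present).
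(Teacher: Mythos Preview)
Your overall strategy matches the paper: by Theorem~\ref{th1} the number of projective-planar embeddings equals the number of $K_2$- and $K_4$-subgraphs of $G^*$, the edge count gives $\tfrac{3}{2}n$ copies of $K_2$, and the problem reduces to showing that $G^*$ has at most $\tfrac{n}{2}-1 = |V(G^*)|-3$ copies of $K_4$. Your lower bound is fine and is exactly the paper's.

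The gap is in the upper-bound step. The premise ``each $K_4$ uses four of them [triangular faces]'' is false. In any triangulation on more than four vertices, every $K_4$-subgraph has at least one of its four triangles separating, hence not a face; in the extremal stacked triangulations (Apollonian networks) many $K_4$'s have only two of their triangles appearing as faces. Concretely, start from $K_4$ and perform two $3$-vertex additions: the intermediate $K_4$ already has only two facial triangles. A ``four faces per $K_4$, no overcounting'' double count would give at most $n/4$ copies, yet Apollonian networks realise $\tfrac{n}{2}-1$, so such a count cannot go through. (Your auxiliary claim that two distinct $K_4$'s cannot share a facial triangle is in fact true, but that alone does not rescue the argument.) Your earlier description of a $K_4$-subgraph as ``a triangular face whose closed neighbourhood forms a $K_4$'' is also inaccurate: a $K_4$-subgraph is simply four pairwise adjacent vertices and need not be tied to any particular face.

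The paper obtains the bound via Lemma~\ref{lemup}, proved by a short induction: if $T$ has a separating triangle $C$, split $T$ along $C$ into triangulations $T_1,T_2$ with $|V(T_1)|+|V(T_2)|=|V(T)|+3$; every $K_4$ of $T$ lies entirely in $T_1$ or $T_2$, so the count is at most $(|V(T_1)|-3)+(|V(T_2)|-3)=|V(T)|-3$. Equality holds precisely for triangulations obtained from $K_4$ by repeated $3$-vertex additions, which on the primal side are the graphs obtained from $K_4$ by repeated truncations.
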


\begin{theorem}\label{numtorus}
A $3$-connected $3$-regular planar graph with $n\geq 5$ vertices has at least $\frac{5}{2}n$ inequivalent embeddings on the torus.
\end{theorem}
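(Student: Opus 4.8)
The plan is to use the bijection furnished by Theorem~\ref{th0} (equivalently Lemma~\ref{lemtorus}): the inequivalent toroidal embeddings of $G$ are in one-to-one correspondence with the subgraphs $H_X$ of the dual $G^{*}$ of $G$ (embedded on the sphere) that are isomorphic to $K_{2,2,2}$, $K_{2,2m}$ or $K_{1,1,2m-1}$ for some positive integer $m$. Thus it suffices to exhibit at least $\frac{5}{2}n$ distinct such subgraphs of $G^{*}$. First I would record the relevant parameters of $G^{*}$: since $G$ is $3$-connected, $3$-regular and planar with $n$ vertices, it has $\frac{3}{2}n$ edges and, by Euler's formula, $\frac{n}{2}+2$ faces; hence $G^{*}$ is a $3$-connected simple planar triangulation with $\frac{n}{2}+2$ vertices, $\frac{3}{2}n$ edges, and exactly $n$ faces, every face being a triangle because $G$ is $3$-regular. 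I will only need the two extreme members of the allowed families, namely $K_{1,1,1}=K_3$ (the $m=1$ case of $K_{1,1,2m-1}$) and $K_{2,2}$ (the $m=1$ case of $K_{2,2m}$), that is, triangles and $4$-cycles in $G^{*}$.

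For the triangles, each of the $n$ triangular faces of $G^{*}$ is a subgraph isomorphic to $K_3\cong K_{1,1,1}$, which already produces $n$ admissible subgraphs. For the $4$-cycles, I would associate to every edge $e=uv$ of $G^{*}$ the two triangular faces $uvw$ and $uvx$ incident with $e$ (with $w\neq x$); their union is a ``diamond'' whose boundary $u\,w\,v\,x$ is a $4$-cycle, and the edge-induced subgraph on these four boundary edges is isomorphic to $K_{2,2}$ (the diagonal $uv$ is not included). This assigns a $4$-cycle to each of the $\frac{3}{2}n$ edges of $G^{*}$.

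The key step, and the only real obstacle, is to show that these $\frac{3}{2}n$ diamond $4$-cycles are pairwise distinct; this is exactly where the hypothesis $n\geq5$ enters. If two distinct edges gave the same $4$-cycle $C$, then both edges would be diagonals of $C$, so all four triangles spanned by $V(C)$ would be faces of $G^{*}$. These four faces already tile the sphere, forcing $G^{*}=K_4$, i.e.\ $n=4$, contrary to $n\geq5$. Hence for $n\geq5$ the assignment ``edge $\mapsto$ diamond $4$-cycle'' is injective and yields $\frac{3}{2}n$ distinct $4$-cycles.

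Finally, a triangle (three edges) and a $4$-cycle (four edges) can never be the same subgraph, so the $n$ facial triangles and the $\frac{3}{2}n$ diamond $4$-cycles constitute $n+\frac{3}{2}n=\frac{5}{2}n$ pairwise distinct subgraphs of $G^{*}$, each of one of the admissible isomorphism types. Through the one-to-one correspondence of Theorem~\ref{th0} these correspond to $\frac{5}{2}n$ pairwise inequivalent embeddings of $G$ on the torus, which establishes the claimed lower bound.
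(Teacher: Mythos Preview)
Your proof is correct and follows essentially the same approach as the paper: count the $n$ facial triangles of $G^{*}$ as copies of $K_{1,1,1}$ and, for each of the $\frac{3}{2}n$ edges of $G^{*}$, the boundary of the union of its two incident triangular faces as a copy of $K_{2,2}$, using $n\geq 5$ to guarantee that distinct edges give distinct $4$-cycles. Your injectivity argument (two diagonals force $G^{*}=K_4$) is exactly the paper's observation that for $G\not\cong K_4$ the diamond $4$-cycle has no other chord, just phrased a bit more explicitly.
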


\begin{theorem}\label{numKle}
A $3$-connected $3$-regular planar graph with $n$ vertices has at least $\frac{3}{8}n(3n+2)$ inequivalent embeddings on the Klein bottle.
\end{theorem}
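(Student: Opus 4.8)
The plan is to exploit the bijection of Theorem~\ref{th2} and simply exhibit enough allowed subgraphs in the dual. Write $G^\ast$ for the dual of $G$ on the sphere. Since $G$ is $3$-regular and $3$-connected, $G^\ast$ is a simple $3$-connected triangulation; from $|E(G)|=\tfrac{3n}{2}$ and Euler's formula it has $\tfrac{n}{2}+2$ vertices and $|E(G^\ast)|=\tfrac{3n}{2}$ edges, and every edge of $G^\ast$ lies in exactly two triangular faces. By Theorem~\ref{th2} the number of inequivalent Klein-bottle embeddings of $G$ equals the number of edge-subsets $Y\subseteq E(G^\ast)$ whose edge-induced subgraph is isomorphic to one of $K_{2,2m-1}$, $K_{1,1,2m}$ or $A_1,\dots,A_6$, and distinct $Y$ give distinct embeddings. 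It therefore suffices to produce at least $\tfrac{3}{8}n(3n+2)$ such subsets.

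The key observation is that the smallest allowed structures already account for a quadratic number of subsets. I would first run over every $2$-element subset $\{e,f\}\subseteq E(G^\ast)$. Because $H_X$ is induced by its \emph{edge} set (not by its vertex set), the edge-induced subgraph on $\{e,f\}$ is the path $K_{2,1}=K_{2,2\cdot 1-1}$ when $e$ and $f$ share an endpoint, and the disjoint union $2K_2$ when they do not; and $2K_2$ is exactly $A_1$ (the case $m=n=2$, $|V_{even}|=0$ in the proof of Lemma~\ref{lemKle}). Both are on the Klein-bottle list, so all $\binom{|E(G^\ast)|}{2}=\binom{3n/2}{2}$ pairs contribute distinct embeddings.

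Next I would add the five-edge structures $K_{1,1,2}=K_{1,1,2\cdot 1}$, which are automatically disjoint from the two-edge subsets above. For each edge $ab\in E(G^\ast)$ the two triangles of the triangulation containing $ab$ supply two distinct common neighbours $c,d$ of $a$ and $b$, so the five edges $ab,ac,ad,bc,bd$ form an edge-induced $K_{1,1,2}$. Since the unique edge joining the two degree-$3$ vertices of such a subgraph recovers $ab$, distinct base edges yield distinct subsets, giving at least $|E(G^\ast)|=\tfrac{3n}{2}$ further embeddings. Summing the two families,
\[
\binom{3n/2}{2}+\frac{3n}{2}=\frac{3n(3n-2)}{8}+\frac{3n}{2}=\frac{3n(3n+2)}{8},
\]
which is the claimed bound.

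The genuinely delicate points, where I expect the care to lie, are all bookkeeping: verifying that the three families ($K_{2,1}$, $A_1$, $K_{1,1,2}$) consist of pairwise distinct edge-subsets—immediate, since the first two have two edges and the last has five, with the recovery of $ab$ resolving collisions inside the last family—and confirming that each named graph appears on the Klein-bottle list of Theorem~\ref{th2} and on neither the projective-plane list (Lemma~\ref{lempro}) nor the torus list (Lemma~\ref{lemtorus}). The one conceptual step that makes the quadratic main term $\tfrac{9}{8}n^2$ fall out with no optimization over the degree sequence of $G^\ast$ is the use of the edge-induced (rather than vertex-induced) nature of $H_X$: it guarantees that \emph{every} pair of edges is already one of the two smallest allowed structures.
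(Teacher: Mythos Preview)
Your argument is correct and is essentially the paper's own proof: choose any two edges to obtain $H_X\cong K_{2,1}$ or $A_1$, giving $\binom{3n/2}{2}$ embeddings, and for each dual edge take the two incident triangular faces to obtain a $K_{1,1,2}$, giving $\tfrac{3n}{2}$ more. The only difference is cosmetic---you phrase things over $E(G^\ast)$ rather than $E(G)$ and add the (welcome) explicit check that distinct base edges yield distinct $K_{1,1,2}$'s via the unique edge between the degree-$3$ vertices.
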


Before we prove these theorems,
we consider a situation where
the dual of $G$ embedded on the sphere has many subgraphs isomorphic to $K_4$,
which is useful for showing the upper bound of Theorem \ref{numpro} and characterizing graphs attaining this bound.

A \emph{triangulation} on a surface
is an embedding of a graph on the surface
such that each face is bounded by a cycle of order $3$ and any two faces are incident with at most one common edge.
A graph embedded on the sphere is $3$-connected and $3$-regular if and only if the dual is a triangulation on the sphere.
For a triangulation $T$,
a \emph{$3$-vertex addition} is an operation of adding a vertex into a face $\Delta$ of $T$ and joining the new vertex to the vertices on the boundary of $\Delta$.

\begin{lemma}\label{lemup}
Every triangulation $T$ on the sphere has at most $ \left( |V(T)|-3 \right)$ subgraphs isomorphic to $K_4$.
In particular, $T$ attains the upper bound
if and only if
$T$ is obtained from $K_4$ embedded on the sphere
by a sequence of $3$-vertex additions. 
\end{lemma}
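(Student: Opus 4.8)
The plan is to argue by induction on $|V(T)|$, writing $\kappa(T)$ for the number of subgraphs of $T$ isomorphic to $K_4$, and to drive the induction through the behaviour of triangulations under \emph{separating triangles}, i.e.\ $3$-cycles that bound vertices of $T$ on both sides. Two elementary facts will be used repeatedly. First, a $3$-vertex addition into a face $\Delta$ raises $|V(T)|$ by one and raises $\kappa(T)$ by exactly one: the new vertex has degree $3$, so the only $K_4$ it can lie in is the one spanned by $\Delta$ and itself, while every old $K_4$ persists. Conversely, if $v$ is a degree-$3$ vertex of a triangulation, then its three neighbours bound a triangular face after $v$ is deleted, $v$ lies in exactly one $K_4$, and $T$ is recovered from $T-v$ by a $3$-vertex addition. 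Second, a triangulation with at least $5$ vertices and no separating triangle contains no $K_4$ at all: four mutually adjacent vertices cut the sphere into four triangular regions, one of which must contain a fifth vertex, and the triangle bounding that region then separates $T$.

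For the upper bound I would induct on $|V(T)|$, the base case $|V(T)|=4$ being $T=K_4$ with $\kappa(T)=1=|V(T)|-3$. If $|V(T)|\ge 5$ and $T$ has no separating triangle, the second fact gives $\kappa(T)=0<|V(T)|-3$. Otherwise fix a separating triangle $C=xyz$; it splits the sphere into two open disks $D_1,D_2$, each carrying a triangulation $T_i$ on $\overline{D_i}$ (obtained by capping the opposite side with the face $xyz$), with $|V(T_1)|+|V(T_2)|=|V(T)|+3$ and $4\le |V(T_i)|<|V(T)|$. Since no edge joins the interiors of the two disks across $C$, every $K_4$ of $T$ lies in exactly one of $\overline{D_1},\overline{D_2}$, and the only vertices shared by $T_1$ and $T_2$ are $x,y,z$, which cannot by themselves host a $K_4$; hence $\kappa(T)=\kappa(T_1)+\kappa(T_2)$. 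The induction hypothesis then yields
$$\kappa(T)=\kappa(T_1)+\kappa(T_2)\le\bigl(|V(T_1)|-3\bigr)+\bigl(|V(T_2)|-3\bigr)=|V(T)|-3 .$$

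The ``if'' half of the equality statement is immediate from the first fact: starting at $K_4$ (where $\kappa=|V|-3=1$) and performing $3$-vertex additions keeps $\kappa(T)=|V(T)|-3$. For the ``only if'' half, suppose $\kappa(T)=|V(T)|-3$ with $|V(T)|\ge 5$. By the second fact $T$ must have a separating triangle, so I would choose one together with a side $D$ minimizing the number of vertices of $T$ strictly inside $D$, and let $T_2$ be the triangulation carried by $\overline{D}$. Minimality forces $T_2$ to have no separating triangle, since any separating triangle of $T_2$ would cut off a still smaller interior that also separates $T$. Now $\kappa(T)=|V(T)|-3$ forces equality $\kappa(T_i)=|V(T_i)|-3$ in both pieces; if $|V(T_2)|\ge 5$ this clashes with $\kappa(T_2)=0$. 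Hence $T_2=K_4$, and its unique interior vertex $v$ has $\deg_T(v)=3$. Deleting $v$ gives $\kappa(T-v)=\kappa(T)-1=|V(T-v)|-3$, so by induction $T-v$ is obtained from $K_4$ by $3$-vertex additions; re-adding $v$ shows the same for $T$.

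The conceptual heart, and the step I expect to need the most care, is the interaction between separating triangles and $K_4$-subgraphs: the clean additivity $\kappa(T)=\kappa(T_1)+\kappa(T_2)$ and the claim that a separating-triangle-free triangulation has no $K_4$. Pinning the converse down is the most delicate point, because one must rule out equality being attained by a triangulation possessing no convenient degree-$3$ vertex; the device of selecting an \emph{innermost} separating triangle, which forces the inner piece to be exactly $K_4$ and thereby exhibits a genuine degree-$3$ vertex of $T$, is what makes the peeling induction close.
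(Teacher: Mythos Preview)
Your argument is correct and, for the upper bound and the ``if'' direction of the characterisation, coincides with the paper's proof almost verbatim: both split along a separating triangle, use the additivity $\kappa(T)=\kappa(T_1)+\kappa(T_2)$, and observe that a $3$-vertex addition increases $\kappa$ by exactly one. The ``only if'' direction differs. The paper applies the induction hypothesis to \emph{both} pieces $T_1,T_2$ (for an arbitrary separating triangle), concludes that each is built from $K_4$ by $3$-vertex additions, and then proves an auxiliary fact: any triangulation with at least five vertices built by $3$-vertex additions has at least two non-adjacent degree-$3$ vertices, so $T_2$ can be peeled down to the triangle $C$ without ever deleting a vertex of $C$; this exhibits $T$ as $T_1$ followed by further $3$-vertex additions. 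Your device of selecting an \emph{innermost} separating triangle is a cleaner shortcut: it forces $T_2=K_4$ outright, hands you a genuine degree-$3$ vertex of $T$ directly, and lets you induct on $T-v$ rather than on the two pieces. Both routes are sound; yours avoids the auxiliary lemma about degree-$3$ vertices at the cost of the (easy) minimality verification that a separating triangle of $T_2$ would yield a strictly smaller side in $T$.
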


\begin{proof}
The proof is by induction on the number of vertices.

If $|V(T)|=4$
then $T$ is $K_4$ itself
and hence the result clearly holds.
Thus, we assume $|V(T)|\geq 5$.

If $T$ has no separating cycle of order $3$
then $T$ has no subgraph isomorphic to $K_4$.
Thus, we may assume that $T$ has a separating cycle $C$ of order $3$,
which separates the sphere into two regions,
denoted by $R_1$ and $R_2$.
Let $T_1$ (resp. $T_2$) be the subgraph of $T$
induced by the vertices lying on $R_1$ (resp. $R_2$) with its boundary.
Then, both of $T_1$ and $T_2$ is also a triangulation on the sphere.
Note that $T_1 \cap T_2 = C$ and $T_1 \cup T_2 = G$.
For any vertices $x\in V(T_1)\backslash V(C)$ and $y\in V(T_2)\backslash V(C)$,
there is no edge whose endvertices are $x$ and $y$,
and hence there are no subgraphs of $T$ isomorphic to $K_4$ having both $x$ and $y$.
Thus,
the number of subgraphs of $T$ isomorphic to $K_4$ is at most
$$ \left( |V(T_1)|-3 \right)
+ \left( |V(T_2)|-3 \right)=\left(|V(T)|+3\right)-6=|V(T)|-3.$$

Next, we characterize triangulations attaining this upper bounds.
Let $\tilde{T}$ be a triangulation on the sphere obtained from $T$ by one operation of a $3$-vertex addition
and $\tilde{v}$ be the additional vertex of $\tilde{T}$.
There is exactly one subgraph of $\tilde{T}$ isomorphic to $K_4$ including $\tilde{v}$.
If $T$ is obtained from $K_4$ by a sequence of $3$-vertex addition
then $T$ has has exactly $|V(T)|-3$ subgraphs isomorphic to $K_4$,
and hence $\tilde{T}$ has exactly $|V(T)|-2=|V(\tilde{T})|-3$ subgraphs isomorphic to $K_4$.

Conversely, suppose that $T$ has exactly $|V(T)|-3$ subgraphs isomorphic to $K_4$.
We may assume that $|V(T)|\geq 5$ and $T$ has a separating cycle $C$ of order $3$.
Then, $T_1$ and $T_2$,
which are defined in the same way as above,
must have exactly $|V(T_1)|-3$ and $|V(T_2)|-3$ subgraphs isomorphic to $K_4$, respectively,
and hence both are obtained from $K_4$ by a sequence of $3$-vertex additions.

Let $T'$ be a triangulation on the sphere obtained from $K_4$ by a sequence of $3$-vertex addition but not $K_4$.
It is easy to check that
any two vertices of degree $3$ are not adjacent in $T'$.
Thus, an operation of a $3$-vertex addition from $T'$ will not decrease the number of vertices of order $3$,
and hence $T'$ has at least two vertices of degree $3$.

The above facts imply that we can obtain $K_4$ from $T_2$ by deleting a vertex of degree $3$ without deleting the vertices on $C$.
By applying these operations to $T$
and deleting the last vertex from $R_2$,
we have just obtained $T_1$.
Therefore, $T$ is also obtained from $K_4$
by a sequence of $3$-vertex additions.
\end{proof}

\begin{proof}[of Theorem \ref{numpro}]
Let $G$ be a 3-connected 3-regular planar graph  embedded on the sphere with $n$ vertices
and $G^*$ be its dual.
Choose an edge $e$ of $G$ and put $X=\{ e \}$.
Then, $H_X$ is isomorphic to $K_2$ and hence $f_X(G)$ is embedded on the projective-plane
by Lemma \ref{lempro}.
It implies that $G$ has at least $|E(G)|$ inequivalent embeddings on the projective-plane.
Since $G$ is $3$-regular,
we have $|E(G)|=\frac{3}{2}n$.

By Lemma \ref{lemup},
there are at most $|V(G^*)|-3$ subgraphs of $G^*$ isomorphic to $K_4$.
By Euler's formula,
$|V(G^*)|=(|V(G)|+4)/2=\frac{n+4}{2}$
and hence $G^*$ has at most $ \left( \frac{n+4}{2}-3 \right)$ subgraphs isomorphic to $K_4$.
Thus, by Theorem \ref{th1}, $G$ has at most $\frac{3}{2}n+\left(\frac{n}{2}-1\right)=2n-1$ inequivalent embeddings on the projective-plane.
\end{proof}

Not only $3$-connected $3$-regular planar graphs,
any $2$-connected graph $G$ has $|E(G)|$ inequivalent embeddings on the projective-plane
by twisting each edge of $G$.
Then, the assumptions on $3$-connectivity and $3$-regularity are not necessary in the lower bound in Theorem \ref{numpro}.
However, these assumptions are clearly necessary in the upper bound in Theorem \ref{numpro}.
In fact, we can easily construct non-$3$-connected or non-$3$-regular $2$-connected planar graphs
having exponentially many inequivalent embeddings on the projective-plane.

\begin{proof}[of Theorem \ref{numtorus}]
Let $G$ be a 3-connected 3-regular planar graph  embedded on the sphere with at least $5$ vertices
and $G^*$ be its dual.
Every face of $G^*$ is bounded by a cycle of order $3$ ($=K_{1,1,1}$)
and every edge of $G^*$ forms a chord of a cycle of order $4$ ($=K_{2,2}$)
since it is incident with just two triangle faces.
If $G$ is not isomorphic to $K_4$,
then there are no other chords in this cycle.
Thus, $G^*$ has at least $|V(G)|$ cycles of order $3$
and at least $|E(G)|$ cycles of order $4$ as subgraphs.
As $G$ is 3-regular, $|E(G)|=3n/2$.
Then, by Theorem \ref{th0},
$G$ has at least $n+3n/2=5n/2$ inequivalent embeddings on the torus.
\end{proof}

Note that a $3$-connected $3$-regular planar graph with at most $4$ vertices must be isomorphic to $K_4$,
which has exactly $7\leq 5\cdot 4/2$ inequivalent embeddings on the torus.

\begin{proof}[of Theorem \ref{numKle}]
Let $G$ be a 3-connected 3-regular planar graph  embedded on the sphere with $n$ vertices
and $G^*$ be its dual.
Choose two distinct edges $e_1$ and $e_2$ of $G$
and put $X=\{ e_1, e_2\}$.
Then, $H_X$ is isomorphic to $K_{2,1}$ or $A_1$.
By Lemma \ref{lemKle}, $f_X(G)$ is embedded on the Klein bottle.

In addition, we try to find subgraphs isomorphic to $K_{1,1,2}$ in $G^*$.
Since $G^*$ is a triangulation on the sphere,
every edge $e^*$ is incident with just two triangle faces.
The five edges bounding these faces induce subgraphs of $G^*$ isomorphic to $K_{1,1,2}$.
Then, $G^*$ has $|E(G^*)|$ subgraphs isomorphic to $K_{1,1,2}$.

These results imply that, by Theorem \ref{th2}, $G$ has at least $\binom{|E(G)|}{2}+|E(G^*)|=\frac{3}{8}n(3n+2)$ inequivalent embeddings on the Klein bottle.
\end{proof}

\subsection{Examples}

First, we characterize the graphs attaining the lower bound of Theorem \ref{numpro}.
By Theorem \ref{numpro}, the following clearly holds.

\begin{corollary}\label{corolowerpro}
A $3$-connected $3$-regular planar graph $G$ with $n$ vertices has exactly $\frac{3}{2}n$ inequivalent embeddings on the projective-plane
if and only if the dual of $G$ embedded on the sphere has no subgraph isomorphic to $K_4$.\qed
\end{corollary}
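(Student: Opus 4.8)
The plan is to read off the exact count directly from the bijection furnished by Theorem \ref{th1}. That theorem puts the inequivalent embeddings of $G$ on the projective-plane in one-to-one correspondence with the subgraphs of the dual $G^*$ (embedded on the sphere) isomorphic to $K_2$ or $K_4$. Hence the total number of such embeddings is precisely the number of $K_2$-subgraphs of $G^*$ plus the number of $K_4$-subgraphs of $G^*$, and the whole statement reduces to analysing when the second summand vanishes.

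First I would count the $K_2$-subgraphs. A subgraph of $G^*$ isomorphic to $K_2$ is nothing but a single edge, so there are exactly $|E(G^*)|$ of them. By planar duality $|E(G^*)|=|E(G)|$, and since $G$ is $3$-regular on $n$ vertices we have $|E(G)|=\frac{3}{2}n$. Thus the number of $K_2$-subgraphs is exactly $\frac{3}{2}n$, which is precisely the lower bound appearing in Theorem \ref{numpro}; equivalently, this is the contribution recorded in the lower-bound argument already given in the proof of that theorem.

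It then follows that the total number of inequivalent projective-plane embeddings equals $\frac{3}{2}n$ plus the number of $K_4$-subgraphs of $G^*$, a nonnegative quantity. Consequently this total is exactly $\frac{3}{2}n$ if and only if $G^*$ has no subgraph isomorphic to $K_4$, which is the asserted equivalence.

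There is essentially no obstacle in this argument; the only point needing care is the identification of the $K_2$-subgraphs with the edges of $G^*$ together with the edge-count $|E(G^*)|=\frac{3}{2}n$, both of which are routine. All the genuine content is carried by Theorem \ref{th1}, so the corollary is immediate.
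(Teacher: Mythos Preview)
Your proposal is correct and follows essentially the same approach as the paper: the paper simply states that the corollary ``clearly holds'' by Theorem~\ref{numpro} (whose proof already exhibits the $\frac{3}{2}n$ edges of $G^*$ as the $K_2$-contribution and the $K_4$-subgraphs as the remainder via Theorem~\ref{th1}), and you have spelled out precisely that reasoning.
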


By this corollary,
we show the following two families of graphs attaining the lower bound of Theorem \ref{numpro}.

A graph is \emph{cyclically $k$-edge-connected} if there is no set of at most $k-1$ edges
such that the graph obtained by deleting these edges has at least two components having a cycle.

\begin{proposition}\label{prolowerpro}
A $3$-connected $3$-regular planar graph $G$ with $n\geq 5$ vertices has exactly $\frac{3}{2}n$ inequivalent embeddings on the projective-plane
if $G$ is bipartite or cyclically $4$-edge-connected.
\end{proposition}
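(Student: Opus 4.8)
The plan is to reduce the statement to Corollary~\ref{corolowerpro}: it suffices to prove that if $G$ is bipartite or cyclically $4$-edge-connected, then the dual $G^*$ of $G$ embedded on the sphere has no subgraph isomorphic to $K_4$. Throughout I will use that $G^*$ is a triangulation on the sphere, and that since $G$ is $3$-regular the order $n$ is even; hence $n\geq 5$ forces $n\geq 6$ and $|V(G^*)|=\frac{n+4}{2}\geq 5$, so $G^*$ is not $K_4$ itself. I also record the reformulation already used inside the proof of Lemma~\ref{lemup}: a $K_4$ in a triangulation with at least $5$ vertices forces a separating cycle of order $3$, since the four triangular regions bounded by such a $K_4$ cannot all be faces once an extra vertex is present. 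Equivalently, if $G^*$ has no separating cycle of order $3$, then $G^*$ has no $K_4$.

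For the bipartite case I would argue through the parity of face lengths. As $G$ is $3$-connected and planar, every facial walk is a cycle, and since $G$ is bipartite each such cycle is even. Passing to the dual, the length of a face of $G$ equals the degree of the corresponding vertex of $G^*$, so every vertex of $G^*$ has even degree; that is, $G^*$ is an Eulerian triangulation of the sphere. I would then invoke the classical fact that an Eulerian triangulation of the sphere is properly $3$-vertex-colorable. Since $K_4$ has chromatic number $4$ and every subgraph of a $3$-colorable graph is $3$-colorable, $G^*$ can contain no $K_4$, and the claim follows in this case.

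For the cyclically $4$-edge-connected case I would set up the dictionary between triangles of $G^*$ and $3$-edge-cuts of $G$. A $3$-cycle of $G^*$ is dual to a minimal $3$-edge-cut (bond) of $G$; the facial triangles of $G^*$ correspond to the trivial cuts formed by the three edges around a vertex of $G$, while the separating (non-facial) triangles correspond to the nontrivial $3$-edge-cuts. Suppose $G^*$ had a separating triangle. The corresponding nontrivial $3$-edge-cut splits $G$ into two parts, each of which has an odd number of vertices by a parity count in the cubic graph, hence at least three vertices, and therefore contains a cycle. This contradicts cyclic $4$-edge-connectivity; here I also use that $G$, being $3$-connected and cubic, is $3$-edge-connected, so that there are no disconnecting cuts of size $1$ or $2$. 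Thus $G^*$ has no separating triangle, and by the reformulation above it has no $K_4$.

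The main obstacle is setting up these two translations correctly and justifying the reformulation \emph{no separating triangle $\Rightarrow$ no $K_4$}. For the cyclic case the delicate point is the parity argument guaranteeing that both sides of a nontrivial $3$-edge-cut contain a cycle; for the bipartite case the only nonroutine input is the $3$-colorability of Eulerian triangulations, which I would either cite as classical or derive from the bipartition of $G$ through the standard face/vertex correspondence between $G$ and $G^*$. Once both cases give that $G^*$ has no $K_4$, Corollary~\ref{corolowerpro} completes the proof.
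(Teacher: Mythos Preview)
Your proof is correct and follows essentially the same approach as the paper: reduce to Corollary~\ref{corolowerpro}, handle the bipartite case via the $3$-colorability of even (Eulerian) triangulations of the sphere, and handle the cyclically $4$-edge-connected case by showing $G^*$ has no separating triangle and hence no $K_4$. You supply more justification than the paper does---in particular the parity argument that both sides of a nontrivial $3$-edge-cut in a cubic graph contain a cycle, and the explicit check that $|V(G^*)|\geq 5$---but the structure and key ideas are identical.
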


\begin{proof}
We only have to show that the dual $G^*$ of $G$ embedded on the sphere has no subgraph isomorphic to $K_4$.

If $G$ is bipartite
then degree of each vertex of $G^*$ is even, that is, $G^*$ is a \emph{even} triangulation.
It is well-known that every even triangulation 
on the sphere is (vertex) $3$-colorable
and hence has no subgraph isomorphic to $K_4$.

If $G$ is cyclically $4$-edge-connected and $n\geq 5$,
then $G^*$ has no separating cycle of order $3$
and hence has no subgraph isomorphic to $K_4$.
\end{proof}

Second, we characterize the graphs attaining the upper bound of Theorem \ref{numpro}.
Towards this goal, we introduce a transforming operation of $G$,
which corresponds to a $3$-vertex addition in the dual of $G$.

Let $v$ be a vertex of $G$, and $u_1,u_2$ and $u_3$ be vertices adjacent to $v$.
A \emph{truncation} of a vertex $v$ in $G$ 
is an operation of replacing a small part around $v$ with a cycle of order $3$ shown in Fig.~\ref{trunc};
delete $v$ and add new vertices $v_1,v_2$ and $v_3$ together with six edges $u_1v_1,u_2v_2,u_3v_3,v_1v_2,v_2v_3$ and $v_3v_1$.

\begin{figure}[htbp]
\begin{center}
\includegraphics[width=85mm]{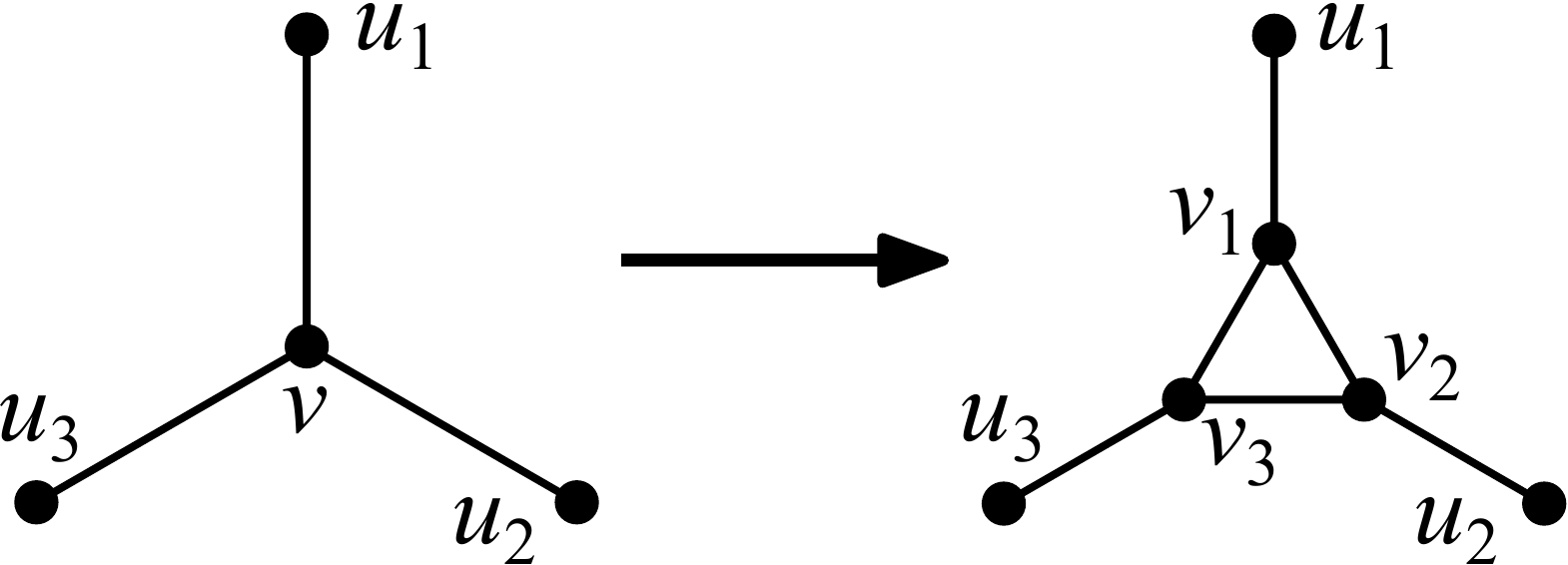}
\end{center}
\caption{Truncation of a vertex}
\label{trunc}
\end{figure}

The resulting graph,
denoted by $G'$,
is also $3$-connected, $3$-regular and planar.
The dual $(G')^*$ is obtained from the dual $G^*$ of $G$ by a $3$-vertex addition.
Then, the following clearly holds by Lemma \ref{lemup}.

\begin{corollary}\label{upperpro}
A $3$-connected $3$-regular planar graph $G$ with $n$ vertices has exactly $2n-1$ inequivalent embeddings on the projective-plane if and only if $G$ is obtained from $K_4$ by a sequence of truncations. \qed
\end{corollary}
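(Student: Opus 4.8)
The plan is to reduce the counting of projective-planar embeddings to a count of $K_4$-subgraphs in the dual, and then to invoke Lemma \ref{lemup} together with the duality between truncation and $3$-vertex addition. First I would recall from Theorem \ref{th1} that the inequivalent embeddings of $G$ on the projective-plane correspond bijectively to the subgraphs of the dual $G^*$ (embedded on the sphere) isomorphic to $K_2$ or $K_4$. The number of $K_2$-subgraphs is simply $|E(G^*)|=|E(G)|=\frac{3}{2}n$, so the total embedding count equals $\frac{3}{2}n$ plus the number of subgraphs of $G^*$ isomorphic to $K_4$.

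Next I would compute, by Euler's formula as in the proof of Theorem \ref{numpro}, that $|V(G^*)|=\frac{n+4}{2}$. Hence the embedding count attains the upper bound $2n-1$ exactly when the number of $K_4$-subgraphs of $G^*$ equals $2n-1-\frac{3}{2}n=\frac{n}{2}-1=|V(G^*)|-3$. This is precisely the equality case of Lemma \ref{lemup}, which characterizes the extremal triangulations as those obtained from $K_4$ by a sequence of $3$-vertex additions. So at this point I would have shown that $G$ has exactly $2n-1$ embeddings if and only if $G^*$ is obtained from $K_4$ by a sequence of $3$-vertex additions.

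Finally I would translate the dual condition back to $G$ itself. The key structural fact, already noted just before the statement, is that truncating a vertex of $G$ (as in Fig.~\ref{trunc}) corresponds exactly to a $3$-vertex addition in $G^*$; that is, $(G')^*$ is obtained from $G^*$ by a $3$-vertex addition whenever $G'$ is obtained from $G$ by a truncation. Since $K_4$ is self-dual on the sphere, a sequence of truncations producing $G$ from $K_4$ corresponds to a sequence of $3$-vertex additions producing $G^*$ from $K_4$, and conversely. Chaining the three equivalences then yields the corollary.

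The main obstacle, such as it is, lies in making the last step a genuine bijection between \emph{sequences} rather than merely a single-step correspondence: one must verify that every $3$-vertex addition in the dual arises from a truncation in the primal, so that the extremal duals supplied by Lemma \ref{lemup} really do pull back to graphs buildable by truncation, and that the self-duality of $K_4$ furnishes the correct base case. Both points are routine given the explicit picture of truncation, but this is the only place where the argument must run in both directions instead of simply invoking an earlier result verbatim.
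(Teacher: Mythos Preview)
Your proposal is correct and is exactly the argument the paper intends: the corollary is stated with a \qed because it follows immediately from Lemma~\ref{lemup} together with the observation, made just before the statement, that truncation in $G$ is dual to $3$-vertex addition in $G^*$ (with $K_4$ self-dual as the base case). You have simply spelled out in detail what the paper leaves implicit.
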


Third, we provided graphs attaining the lower bounds of Theorems \ref{numtorus} and \ref{numKle}.

\begin{corollary}\label{corotorus}
A $3$-connected $3$-regular planar graph $G$ with $n\geq 5$ vertices has exactly $\frac{5}{2}n$ inequivalent embeddings on the torus
if and only if $G$ is cyclically $5$-edge-connected.
\end{corollary}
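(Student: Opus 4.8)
The plan is to reduce the statement to a counting identity for subgraphs of the dual and then to translate cyclic $5$-edge-connectivity into a condition on those subgraphs. By Theorem \ref{th0}, the number $N$ of inequivalent toroidal embeddings of $G$ equals the number of subgraphs of $G^*$ (the dual of $G$ on the sphere, a triangulation) isomorphic to $K_{2,2,2}$, $K_{2,2m}$ or $K_{1,1,2m-1}$. As in the proof of Theorem \ref{numtorus}, the $n$ triangular faces give copies of $K_{1,1,1}$ ($=K_{1,1,2m-1}$ with $m=1$) and the $\frac32 n$ edges give copies of $K_{2,2}$ ($=K_{2,2m}$ with $m=1$), so $N\geq \frac52 n$. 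Hence $N=\frac52 n$ \emph{if and only if} every inequality is tight, i.e. $G^*$ has exactly $n$ triangles, exactly $\frac32 n$ $4$-cycles, and no subgraph isomorphic to $K_{2,2,2}$, $K_{2,2m}$ $(m\geq 2)$ or $K_{1,1,2m-1}$ $(m\geq 2)$. So I would spend the proof characterising when these three conditions hold simultaneously.

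Second, I would convert cyclic $5$-edge-connectivity into a dual statement. Planar duality sends bonds (minimal edge cuts) of $G$ to cycles of $G^*$; a bond with sides $A,B$ is a \emph{cyclic} edge cut precisely when both $G[A]$ and $G[B]$ contain a cycle, and $G[A]$ contains a cycle exactly when the disk bounded by the dual cycle contains a vertex of $G^*$ in its interior. Consequently the cyclic edge connectivity of $G$ equals the minimum length of a \emph{separating} cycle of $G^*$ (one having vertices of $G^*$ strictly on both sides), so $G$ is cyclically $5$-edge-connected if and only if $G^*$ has no separating $3$-cycle and no separating $4$-cycle. It is then routine to match the first two tightness conditions: every triangle of $G^*$ is a face or separating, so there are exactly $n$ triangles iff there is no separating triangle; and, using that a triangulation with at least $5$ vertices and no separating triangle contains no $K_4$ (so each non-separating $4$-cycle has a unique chord), the non-separating $4$-cycles are in bijection with the edges, whence there are exactly $\frac32 n$ $4$-cycles iff there is no separating $4$-cycle.

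The heart of the argument, and the step I expect to be the main obstacle, is showing that the absence of separating $3$- and $4$-cycles already forbids all the larger subgraphs $K_{2,2,2}$, $K_{2,2m}$ $(m\geq2)$ and $K_{1,1,2m-1}$ $(m\geq2)$. The key observation is that each of these graphs contains two vertices $a,b$ with at least three common neighbours $c_1,c_2,c_3$ (for $K_{2,2,2}$, take two vertices in one part). I would then argue that any two such vertices force a separating cycle of length $3$ or $4$: the three internally disjoint paths $a c_i b$ divide the sphere into three regions, each bounded by a $4$-cycle $a c_i b c_j$. If some region contains a vertex of $G^*$ in its interior, that bounding $4$-cycle is separating, since the remaining common neighbour $c_k$ lies strictly inside the other disk. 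Otherwise all vertices of $G^*$ lie on the three paths, so $|V(G^*)|=5$ and $G^*$ is the triangular bipyramid, which has a separating triangle. Either way a separating $3$- or $4$-cycle appears, contradicting cyclic $5$-edge-connectivity. The delicate points here are purely the planarity bookkeeping: that the three paths cut the sphere into exactly three quadrilateral regions and that $c_k$ sits on the correct side.

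Finally I would assemble the equivalences. If $G$ is cyclically $5$-edge-connected then $G^*$ has no separating $3$- or $4$-cycle, giving exactly $n$ triangles, exactly $\frac32 n$ $4$-cycles, and—by the common-neighbour argument—no larger subgraph, so $N=\frac52 n$. Conversely, $N=\frac52 n$ forces all three tightness conditions; the first two yield no separating $3$-cycle and no separating $4$-cycle, hence $G$ is cyclically $5$-edge-connected. The only parts needing careful writing are the duality dictionary ``cyclic edge cut $\leftrightarrow$ separating cycle'' and the planarity bookkeeping in the common-neighbour argument; everything else is tightness accounting against the bound of Theorem \ref{numtorus}.
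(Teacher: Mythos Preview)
Your proposal is correct and takes a genuinely different route from the paper. The paper argues almost entirely in the primal graph $G$, using the pictorial ``shaded region'' structures of Fig.~\ref{torusHX}: for $K_{2,2,2}$ it exhibits an explicit cyclic $4$-edge-cut, and for $K_{2,n}$ or $K_{1,1,n}$ with $n\ge 3$ it argues that under cyclic $5$-edge-connectivity almost all shaded regions are acyclic, forcing $G$ to be a specific small graph (the triangular prism) that is then seen not to be cyclically $5$-edge-connected. For the converse, the paper takes a cyclic $3$- or $4$-edge-cut $X$ directly, observes that $H_X\cong K_3$ or $K_{2,2}$, and notes that this subgraph is different from the facial triangles and edge-induced $4$-cycles counted in Theorem~\ref{numtorus}.

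Your approach stays entirely in the dual triangulation $G^*$ and is more structural: you set up the dictionary ``cyclic edge cut $\leftrightarrow$ separating cycle'', reduce $N=\tfrac52 n$ to the three tightness conditions, and dispose of all the large multipartite subgraphs in one stroke via the common-neighbour observation that a $K_{2,3}$ in a spherical triangulation forces a separating $3$- or $4$-cycle (the residual $5$-vertex case being $K_5-e$, which has a separating triangle). This buys you a short, figure-free argument with no case analysis on the individual graph types; the paper's approach, by contrast, reuses machinery already developed for the figures and simultaneously handles the Klein bottle corollary. The only places that need care in writing up are exactly the ones you flag: that the three length-$2$ paths cut the sphere into three quadrilateral disks with $c_k$ strictly interior to the complementary disk, and the chord-bijection between edges and non-separating $4$-cycles (which is why the ``no $K_4$'' step, via the absence of separating triangles, must come first in the converse direction).
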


\begin{corollary}\label{coroKle}
A $3$-connected $3$-regular planar graph $G$ with $n$ vertices has exactly $\frac{3}{8}n(3n+2)$ inequivalent embeddings on the Klein bottle
if $G$ is cyclically $5$-edge-connected.
\end{corollary}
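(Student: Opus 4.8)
The plan is to start from Theorem~\ref{th2}, by which the inequivalent embeddings of $G$ on the Klein bottle correspond bijectively to the edge-subsets $X^{*}$ of the dual $T:=G^{*}$ (embedded on the sphere) whose edge-induced subgraph is isomorphic to $K_{2,2m-1}$, $K_{1,1,2m}$ (for $m\geq 1$), or one of $A_{1},\dots ,A_{6}$. The lower bound in Theorem~\ref{numKle} already produces $\binom{|E(G)|}{2}$ such subsets of size $2$ (every pair of dual edges forms either $K_{2,1}$ or $A_{1}=2K_{2}$) together with the $|E(G^{*})|$ copies of $K_{1,1,2}$ obtained from the two faces incident with each edge. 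Hence it suffices to prove that, when $G$ is cyclically $5$-edge-connected, these are \emph{all} the relevant subgraphs; that is, $T$ contains no subgraph isomorphic to $K_{2,2m-1}$ or $K_{1,1,2m}$ with $m\geq 2$, nor to $A_{2},\dots ,A_{6}$, and its copies of $K_{1,1,2}$ are exactly the $|E(G^{*})|$ just described. Then the count collapses to $\binom{|E(G)|}{2}+|E(G^{*})|=\frac{3}{8}n(3n+2)$, matching the lower bound with equality.

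First I would translate the hypothesis into the language of $T$. Under planar duality a minimal edge cut of $G$ corresponds to a cycle of $T$, and such a cut splits $G$ into two parts each containing a cycle precisely when the dual cycle has a vertex of $T$ in the interior of both disks it bounds. Thus cyclic $5$-edge-connectivity is equivalent to: every cycle of $T$ of length $3$ or $4$ bounds a disk containing no interior vertex on at least one side. For length $3$ this says every triangle of $T$ is a face, so $T$ has no separating triangle and therefore no subgraph isomorphic to $K_{4}$, exactly as in the proof of Proposition~\ref{prolowerpro}. For length $4$ it says every $4$-cycle of $T$ bounds, on one side, a quadrilateral triangulated by a single chord.

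The no-$K_{4}$ property immediately rules out $A_{2},A_{3},A_{4}$ and $A_{5}$, each of which contains $K_{4}$ (a triangle together with its apex in $A_{4},A_{5}$, and a $K_{4}$-component in $A_{2},A_{3}$). It also rules out $K_{1,1,2m}$ for $m\geq 2$: such a subgraph needs two adjacent vertices $a,b$ with at least three common neighbours, hence at least three triangles through the edge $ab$, whereas in a triangulation without separating triangles every triangle is a face and the edge $ab$ lies on exactly two faces; the same fact shows each copy of $K_{1,1,2}$ is forced to consist of the two faces incident with its central edge, giving exactly $|E(G^{*})|$ copies. It remains to exclude $K_{2,2m-1}$ for $m\geq 2$ and $A_{6}$; since $K_{2,2m-1}$ contains $K_{2,3}$, and in $A_{6}$ the two ``apex'' vertices have four common neighbours, in each case we obtain two vertices $a,b$ with three common neighbours $c,d,e$. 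If $a,b$ were adjacent we would again get three triangles through $ab$, impossible; so $ab\notin E(T)$. The three paths $acb,adb,aeb$ then cut the sphere into three quadrilateral regions, and for each of them the third common neighbour lies in the interior of the opposite disk; the length-$4$ condition forces each region to be an empty quadrilateral, whose chord must be $cd$, $de$ or $ec$ (not $ab$, which is absent). Hence $cd,de,ec\in E(T)$ and $\{a,c,d,e\}$ induces $K_{4}$, a contradiction.

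Combining these facts, the subgraphs of $T$ counted by Theorem~\ref{th2} are exactly the $\binom{|E(G)|}{2}$ two-edge subgraphs and the $|E(G^{*})|$ copies of $K_{1,1,2}$, so $G$ has precisely $\frac{3}{8}n(3n+2)$ inequivalent embeddings on the Klein bottle. I expect the technical heart to be the $K_{2,3}$/$A_{6}$ step: pinning down the exact dual translation of cyclic $5$-edge-connectivity for $4$-cycles, and verifying that the remaining common neighbour really lies interior to the complementary disk so that the length-$4$ condition applies and forces the chords $cd,de,ec$.
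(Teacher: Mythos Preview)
Your argument is correct and complete. The dual translation of cyclic $5$-edge-connectivity is accurate (a bond of $G$ of size $\ell$ corresponds to an $\ell$-cycle of $T$, and one side is acyclic exactly when the corresponding disk has no interior vertex), and from it you cleanly eliminate every unwanted subgraph: $A_{2},A_{3},A_{4},A_{5}$ via the $K_{4}$ they contain, $K_{1,1,2m}$ with $m\ge 2$ via the ``at most two faces through an edge'' fact, and $K_{2,3}$ (hence all $K_{2,2m-1}$ with $m\ge 2$ and $A_{6}$) via the $4$-cycle/chord argument, which indeed forces $cd,de,ec\in E(T)$ and hence a $K_{4}$ in $T$. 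The count of the surviving subgraphs ($K_{2,1}$, $A_{1}$, $K_{1,1,2}$) then matches the lower bound of Theorem~\ref{numKle}. One cosmetic point: the implication ``no separating triangle $\Rightarrow$ no $K_{4}$ subgraph'' needs $|V(T)|\ge 5$, but this is automatic whenever one of the excluded graphs would occur, so no genuine gap arises.

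Your route is genuinely different from the paper's. The paper argues on the primal side: for each of $K_{2,2,2},A_{2},\dots ,A_{6}$ it exhibits, directly in the structural pictures of Fig.~\ref{torusHX} and Fig.~\ref{KleHX}, an explicit cyclic edge cut of size at most four in $G$; and for $K_{2,n},K_{1,1,n}$ with $n\ge 3$ it analyses the chain of ``shaded rectangles'', showing that cyclic $5$-edge-connectivity forces all but one to be acyclic and then reaching a contradiction through the small graphs of Fig.~\ref{shade} and Fig.~\ref{prism}. Your approach instead stays entirely inside the dual triangulation and reduces everything to two local facts (no separating $3$-cycle; every $4$-cycle has an empty side). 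What this buys you is a figure-free, uniform argument that dispatches $A_{6}$ and all large $K_{2,r}$ in one stroke via the $K_{2,3}$ reduction; what the paper's approach buys is that it handles the torus corollary and its converse simultaneously with the same pictures, whereas your method is tailored to the ``if'' direction of the Klein bottle statement.
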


\begin{proof}[of Corollaries \ref{corotorus} and \ref{coroKle}]
Suppose that $G$ is cyclically $5$-edge-connected.
In each of the six structures shown in Fig.~\ref{torusHX} and Fig.~\ref{KleHX} corresponding to $K_{2,2,2},A_2,A_3,A_4,A_5$ and $A_6$,
we can easily find a set of at most four edges (drawn by bold lines)
such that the graph obtained from $G$ by deleting these edges has at least two components having a cycle. 
(The shaded annular area in $A_2$ must have a cycle.)
Then, $G$ has none of these six structures
and hence $G^*$ has no subgraph isomorphic to one of the six graphs $K_{2,2,2},A_2,A_3,A_4,A_5$ and $A_6$.

Suppose that $G^*$ has a subgraph isomorphic to $K_{2,n}$ or $K_{1,1,n}$ with $n\geq 3$.
Then, $G$ has one of the four structures shown in Fig.~\ref{torusHX} and Fig.~\ref{KleHX} corresponding $K_{2,2m},K_{1,1,2m-1},K_{2,2m-1}$ and $K_{1,1,2m}$.
In both case,
$G$ has at least three shaded areas,
and since $G$ is cyclically $5$-edge-connected,
all shaded areas except for at most one have no cycle.
Hence, there are two consecutive shaded areas having no cycle in $G$,
one of which is rectangle.
These shaded areas together with edges joining them form one of the three subgraphs shown in Fig.~\ref{shade}.

\begin{figure}[htbp]
\begin{center}
\includegraphics[width=105mm]{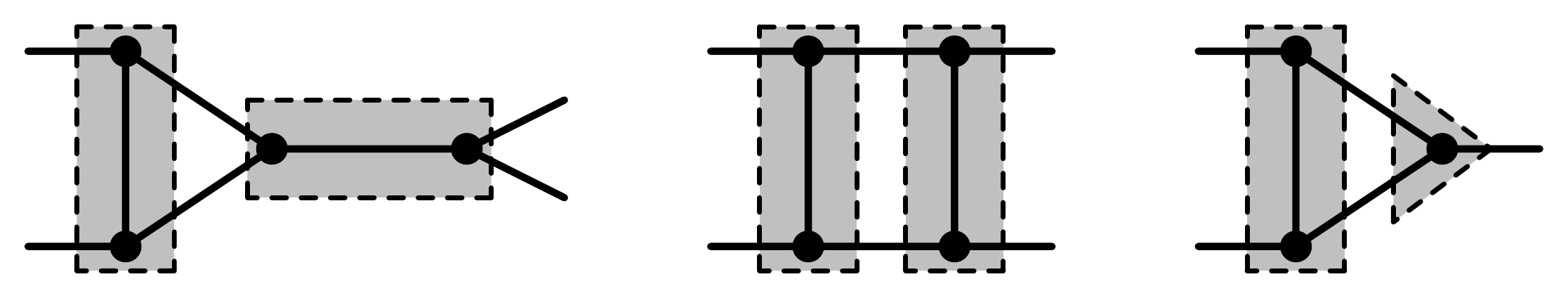}
\end{center}
\caption{Subgraphs formed by two consecutive shaded areas}
\label{shade}
\end{figure}

These subgraphs have a cycle and can be separated from $G$ by deleting at most four edges.
This implies that $G$ has at most one more shaded rectangle
and that it contains no cycle.
Hence it must be one of the three graphs shown in Fig.~\ref{prism}.
These graphs have distinct structures but each graph is the same as the others.
However, this graph is not cyclically $5$-edge-connected, a contradiction.

\begin{figure}[htbp]
\begin{center}
\includegraphics[width=105mm]{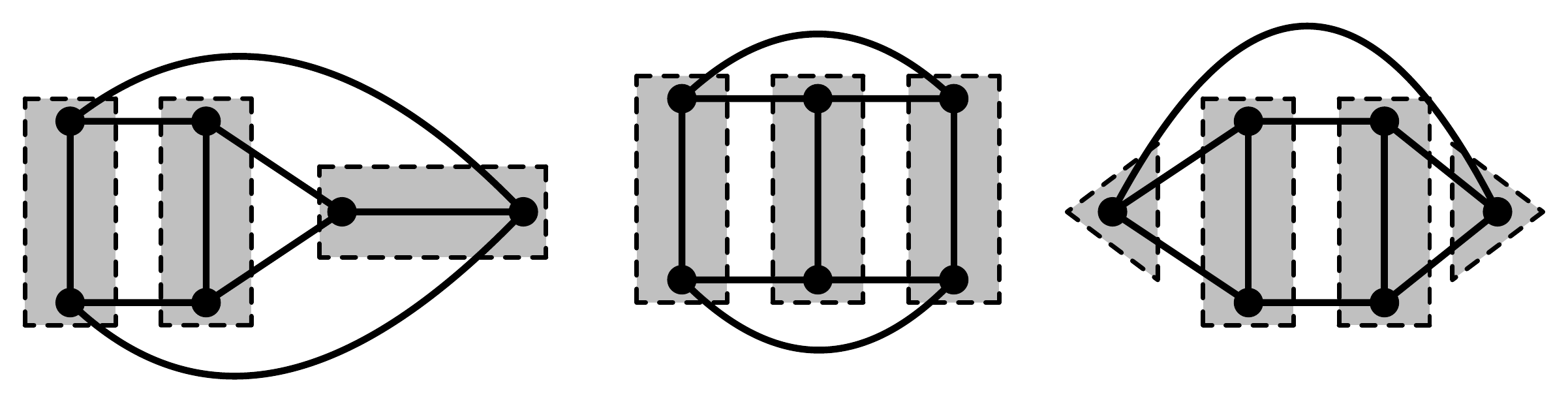}
\end{center}
\caption{Three graphs}
\label{prism}
\end{figure}

Therefore, $G^*$ has no subgraph isomorphic to $K_{2,n}$ or $K_{1,1,n}$ with $n\geq 3$,
and hence $G$ has only $\frac{5}{2}n$ inequivalent embeddings mentioned in the proof of Theorem \ref{numtorus}
on the torus,
and only $\frac{3}{8}n(3n+2)$ inequivalent embeddings mentioned in the proof of Theorem \ref{numKle}
on the Klein bottle.

Suppose $G$ is not cyclically $5$-edge-conneted.
Since $G$ is 3-connected,
there are three or four edges of $G$
whose removal results in a disconnected graph having exactly two components,
both of which contain a cycle.
Let $X$ be such edges.
Then, $H_X$ is isomorphic to $K_{1,1,1}=K_3$ or $K_{2,2}$,
and hence $f_X(G)$ is embedded on the torus.
However, this re-embedding conforms to none of re-embeddings mentioned in the proof of Theorem \ref{numtorus}.
Thus, the number of inequivalent embeddings of $G$ on the torus is more than $\frac{5n}{2}$.
\end{proof}

A graph attaining the lower bound of Theorem \ref{numKle} is not necessarily cyclically $5$-edge-connected.
For example, the following graph shown in Fig.~\ref{noc} is such a graph.
We can construct infinitely many such graphs
but we omit this here.

\begin{figure}[htbp]
\begin{center}
\includegraphics[width=40mm]{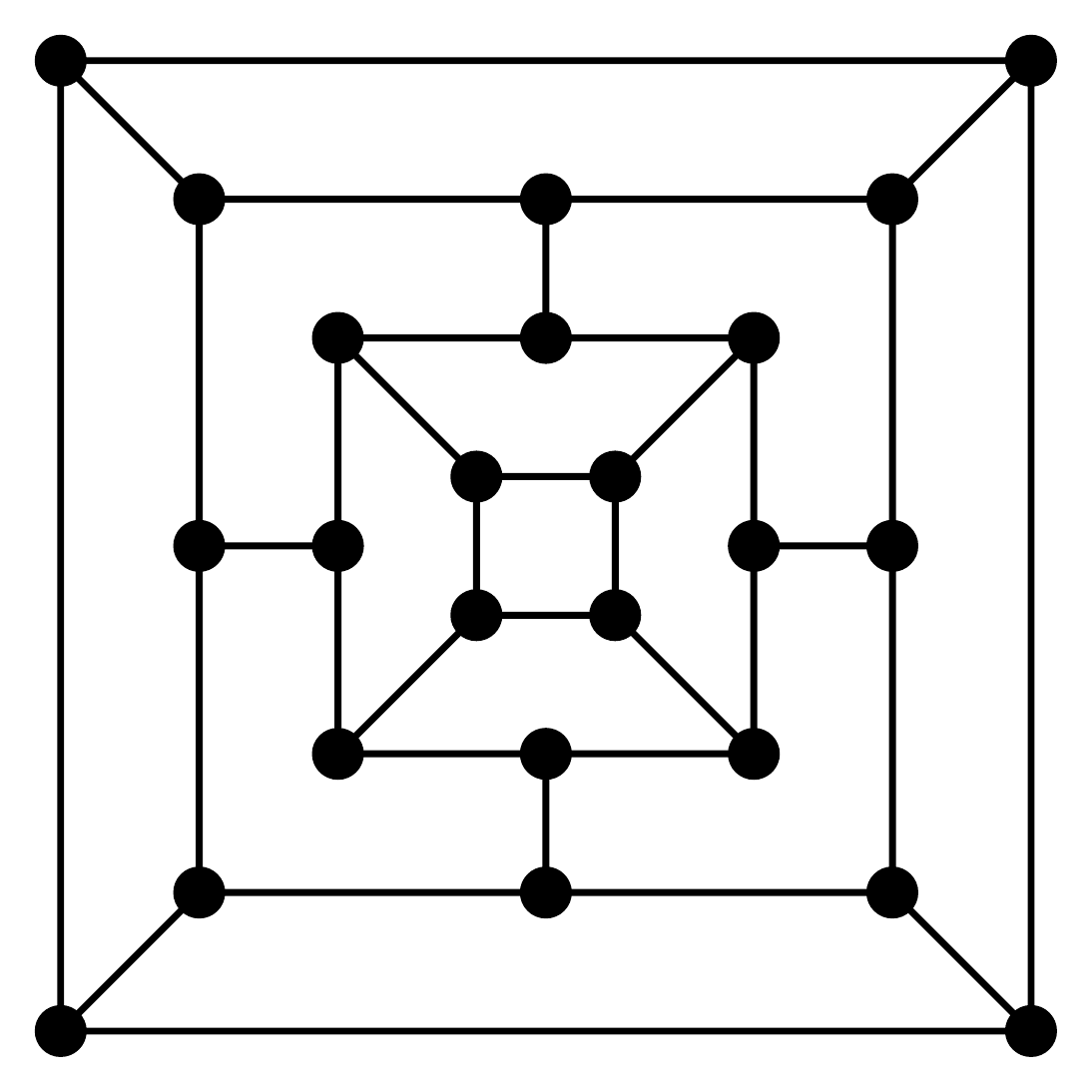}
\end{center}
\caption{A graph attaining the lower bound of Theorem \ref{numKle}}
\label{noc}
\end{figure}

Finally, we show graphs having exponentially many inequivalent embeddings on the torus and the Klein bottle.

\begin{proposition}\label{prop}
For a $3$-connected $3$-regular planar graph $G$ with $n$ vertices,
if the dual $G^*$ of $G$ embedded on the sphere
has a subgraph isomorphic to $K_{1,1,m}$
with a positive integer $m$
then $G$ has at least $2^m-1$ inequivalent embeddings on each of the torus and the Klein bottle.
\end{proposition}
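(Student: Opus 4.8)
The plan is to read Theorems~\ref{th0} and~\ref{th2} as counting statements. By the discussion in Section~\ref{sec1}, the assignment $X\mapsto H_X$ is a bijection between subsets $X\subseteq E(G)$ and edge-subgraphs of $G^*$, and distinct $X$ yield pairwise inequivalent embeddings of $G$. Consequently the number of inequivalent embeddings of $G$ on the torus (resp.\ the Klein bottle) equals the number of edge-subgraphs of $G^*$ isomorphic to one of the graphs in the list of Theorem~\ref{th0} (resp.\ Theorem~\ref{th2}). It therefore suffices to exhibit $2^m-1$ distinct edge-subgraphs of $G^*$ of the relevant isomorphism types lying inside the given copy of $K_{1,1,m}$. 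First I would fix notation for this subgraph $H\cong K_{1,1,m}$ of $G^*$: let $a,b$ be the two vertices forming the singleton parts, so that $ab\in E(H)$, and let $v_1,\dots,v_m$ be the vertices of the part of size $m$, each joined to both $a$ and $b$ but to no other $v_i$. Thus $H$ consists of the $m$ triangles $a\,b\,v_i$ sharing the common edge $ab$.

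The key step is the following assignment. For every nonempty subset $S\subseteq\{v_1,\dots,v_m\}$ I would consider the edge-subgraph on the vertex set $\{a,b\}\cup S$ whose ``equatorial'' edges are $\{a v_i,\,b v_i : v_i\in S\}$, either together with the edge $ab$ or without it. Keeping $ab$ produces a copy of $K_{1,1,|S|}$, while omitting $ab$ produces a copy of $K_{2,|S|}$; here it is essential that $H_X$ is the subgraph \emph{induced by edges} rather than by vertices, so that deleting $ab$ legitimately yields $K_{2,|S|}$ and not some larger vertex-induced graph. I then choose exactly one of these two options according to the parity of $|S|$: for the torus I keep $ab$ when $|S|$ is odd (obtaining $K_{1,1,2j-1}$) and delete it when $|S|$ is even (obtaining $K_{2,2j}$); for the Klein bottle I keep $ab$ when $|S|$ is even (obtaining $K_{1,1,2j}$) and delete it when $|S|$ is odd (obtaining $K_{2,2j-1}$). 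In each case the parity of $|S|$ forces precisely one of the two candidates into the valid list of the relevant theorem, so every nonempty $S$ contributes exactly one admissible edge-subgraph to each surface.

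Finally I would verify distinctness and conclude. The vertices among $v_1,\dots,v_m$ occurring in the chosen subgraph are exactly those of $S$, so distinct nonempty subsets $S$ give distinct edge-subgraphs of $G^*$. Hence I obtain $2^m-1$ distinct admissible edge-subgraphs on each surface, corresponding to $2^m-1$ distinct subsets $X\subseteq E(G)$ and therefore to at least $2^m-1$ pairwise inequivalent embeddings of $G$ on the torus and on the Klein bottle. The count can also be split as $2^{m-1}+(2^{m-1}-1)=2^m-1$, separating the contributions of the odd and the nonempty even subsets.

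The routine parts are the bijection and distinctness arguments, which follow directly from Section~\ref{sec1} and Theorems~\ref{th0} and~\ref{th2}. The point requiring the most care—the main obstacle—is the parity bookkeeping that matches $|S|$ to the precise lists $K_{1,1,2j-1},\,K_{2,2j}$ for the torus and $K_{1,1,2j},\,K_{2,2j-1}$ for the Klein bottle, together with the observation that omitting the central edge $ab$ really yields $K_{2,|S|}$ as an edge-induced subgraph of $G^*$; once these are settled the bound is immediate.
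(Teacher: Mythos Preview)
Your proof is correct and follows essentially the same approach as the paper: for each nonempty subset $S$ of the $m$-element part of the given $K_{1,1,m}$, form both $K_{1,1,|S|}$ (keeping the apex edge) and $K_{2,|S|}$ (deleting it), and observe that exactly one of the two lands in the torus list of Theorem~\ref{th0} and the other in the Klein-bottle list of Theorem~\ref{th2}. The paper phrases this more tersely---noting that one obtains $2(2^m-1)$ subgraphs in total with equal numbers of each type $K_{1,1,k}$ and $K_{2,k}$---while you spell out the parity assignment explicitly, but the argument is the same.
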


\begin{proof}
For the complete tripartite graph $K_{1,1,m}$
with partite sets $V_1=\{u\},V_2=\{v\}$ and $V_3=\{a_1,a_2, \ldots ,a_m\}$,
let $S$ be any non-empty subset of $V_3$.
A subgraph induced by $V_1\cup V_2\cup S$
is isomorphic to $K_{1,1,|S|}$.
Deleting an edge $uv$ from this subgraph,
we obtain a subgraph isomorphic to $K_{2,|S|}$.

It implies that if $G^*$ has a subgraph isomorphic to $K_{1,1,m}$,
then we can find $2(2^{m}-1)$ subgraphs in $G^*$
isomorphic to $K_{1,1,k}$ or $K_{2,k}$ for some positive integer $k$.
Moreover, in these subgraphs,
the number of subgraphs isomorphic to $K_{1,1,k}$ is the same as the one isomorphic to $K_{2,k}$ for any $1\leq k\leq m$.
Then, by Theorems \ref{th0} and \ref{th2}, $G$ has at least $2^m-1$ inequivalent embeddings on each of the torus and the Klein bottle.
\end{proof}

\subsection{Algorithms}

By Theorem \ref{numpro},
the number of inequivalent embeddings of $G$ on the projective-plane is $O(n)$ with respect to the number $n$ of vertices of $G$.
In fact, we can easily enumerate these embeddings in polynomial-time with respect to $n$.
Note that we regard an enumeration of embedding  schemes of a graph 
as one of the embeddings of the graph.

\begin{theorem}\label{algo1}
There is a polynomial time algorithm for enumerating inequivalent embeddings of a $3$-connected $3$-regular planar graph on the projective-plane.
\end{theorem}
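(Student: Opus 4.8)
The plan is to reduce the enumeration task directly to the enumeration of $K_2$- and $K_4$-subgraphs of the dual graph $G^*$, exploiting the bijection of Theorem \ref{th1}. By that theorem each inequivalent embedding of $G$ on the projective-plane corresponds to a unique subgraph $H$ of $G^*$ (embedded on the sphere) isomorphic to $K_2$ or $K_4$, and conversely. Once such an $H$ is fixed, the corresponding embedding is recovered by taking $X$ to be the set of edges of $G$ dual to the edges of $H$ and outputting the embedding scheme $(\rho, \lambda)$ in which exactly the edges of $X$ are twisted; here $\rho$ is the fixed spherical rotation system (available because $G$ is $3$-regular) and $\lambda$ is determined by $X$. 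Producing the scheme from $H$ therefore costs time linear in $|E(G)|$, so it suffices to enumerate the two families of dual subgraphs in polynomial time.

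First I would construct $G^*$, a triangulation on the sphere, in linear time from the given spherical embedding of $G$. The $K_2$-subgraphs are precisely the edges of $G^*$, so they are listed in $O(|E(G^*)|)=O(n)$ time, each yielding one projective-planar embedding with $|X|=1$ by Lemma \ref{lempro}. This handles the $\frac{3}{2}n$ ``edge'' embeddings counted in Theorem \ref{numpro}.

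The remaining, and genuinely nontrivial, step is enumerating the $K_4$-subgraphs of $G^*$, which is where I expect the main work to lie. Because $G^*$ is planar it is sparse, so all its triangles can be listed in polynomial time; for each triangle $T=xyz$ I would intersect the adjacency lists of $x,y,z$, and every common neighbour $w$ produces a $K_4$ on $\{x,y,z,w\}$. Crucially, Lemma \ref{lemup} guarantees that $G^*$ has at most $|V(G^*)|-3=O(n)$ such $K_4$-subgraphs, so both the output size and the running time of this search are polynomial in $n$; each $K_4$ gives one projective-planar embedding with $|X|=6$, again by Lemma \ref{lempro}.

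The only points requiring care are to emit each $K_4$ exactly once — since a fixed $K_4$ is detected once for each of its four facial triangles, simple bookkeeping (e.g. recording the vertex set) removes the duplicates — and to confirm that the adjacency-list intersections are genuinely polynomial, which follows from the planarity bound $|E(G^*)|=O(n)$. Running the two enumerations in turn then lists every inequivalent embedding of $G$ on the projective-plane exactly once, in polynomial (indeed easily $O(n^2)$) time, establishing Theorem \ref{algo1}.
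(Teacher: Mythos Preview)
Your proposal is correct and follows essentially the same approach as the paper: reduce the enumeration, via the bijection with $K_2$- and $K_4$-subgraphs of $G^*$ (Lemma~\ref{lempro}/Theorem~\ref{th1}), to a straightforward polynomial-time search in the planar dual. The paper's own proof is in fact much terser---it simply asserts that constructing $G^*$ and $f_X(G)$ and finding the $K_2$- and $K_4$-subgraphs can all be done in polynomial time---so your additional details (the triangle-plus-common-neighbour search, the invocation of Lemma~\ref{lemup} to bound the output size, and the deduplication bookkeeping) are refinements rather than a different route.
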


\begin{proof}
Let $G$ be a $3$-connected $3$-regular planar graph.
The embedding of $G$ on the sphere, its dual $G^*$ and another embedding $f_X(G)$ with a given subset $X$ of $E(G)$ can be obtained in polynomial time.
Then, we only have to find subgraphs isomorphic to $K_2$ or $K_4$ in $G^*$
by Lemma \ref{lempro},
which can be done in polynomial time.
\end{proof}

On the other hand,
there are $3$-connected $3$-regular planar graphs having exponentially many inequivalent embeddings on the torus and the Klein bottle
by Proposition \ref{prop}.
Then, we cannot enumerate inequivalent embeddings of such a graph on the torus or the Klein bottle in polynomial time.
However, we shall give a ``polynomial delay" algorithm for enumerating them.
An enumeration algorithm is said to be \emph{polynomial delay} if the maximum computation time between two consecutive outputs is polynomial in the input size.

For the complete miltipartite graphs $K_{2,m+2}$ and $K_{1,1,m+1}$ with any positive integer $m$,
there are exactly two vertices whose degree is not two.
We call them \emph{apex vertices}.

\begin{theorem}\label{algo2}
There is a polynomial delay algorithm for enumerating inequivalent embeddings of a $3$-connected $3$-regular planar graph on each of the torus and the Klein bottle.
\end{theorem}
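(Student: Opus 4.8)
The plan is to reduce the enumeration to listing, with polynomial delay, all edge-induced subgraphs of the dual $G^*$ (embedded on the sphere) that are isomorphic to one of the graphs appearing in Theorems \ref{th0} and \ref{th2}; by the one-to-one correspondence established there, each such subgraph is exactly some $H_X$ and yields a distinct embedding, so it suffices to enumerate these subgraphs without repetition. First I would compute $G^*$ and its spherical embedding in polynomial time. The sporadic types---$K_{2,2,2}$ for the torus and $A_1,\dots ,A_6$ for the Klein bottle---have bounded order, so all of their edge-induced copies in $G^*$ can be found by examining all vertex tuples of the relevant bounded size; there are only polynomially many, each located in polynomial time, which gives polynomial delay for this part trivially.

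The main work is the two infinite families on each surface, where I use the \emph{apex vertices} defined above: every member of $K_{2,m+2}$ and $K_{1,1,m+1}$ with $m\geq 1$ has exactly two vertices of degree different from two, and these are recoverable from the isomorphism type. Consequently such a subgraph is determined by the unordered pair $\{u,v\}$ of its apex vertices together with the set $S$ of its degree-two vertices, which are precisely common neighbours of $u$ and $v$ in $G^*$; in the $K_{1,1,\ast}$ case the edge $uv$ is additionally present. The algorithm iterates over all $O(|V(G^*)|^2)$ pairs $\{u,v\}$, computes the common-neighbour set $C(u,v)$, and enumerates the subsets $S\subseteq C(u,v)$ of the size and parity required by the family: even $|S|\geq 4$ and odd $|S|\geq 3$ on the torus (for $K_{2,2m}$ and for $K_{1,1,2m-1}$, the latter only when $u\sim v$), and odd $|S|\geq 1$ and even $|S|\geq 2$ on the Klein bottle (for $K_{2,2m-1}$ and for $K_{1,1,2m}$, the latter only when $u\sim v$). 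For a fixed pair these subsets are listed by a reflected Gray code on $C(u,v)$: each step flips one element, so the size alternates parity at every step, subsets of the prescribed parity occur every second step, and those violating the (constant) size lower bound are skipped, of which there are only $O(|C(u,v)|^{2})$ in total. Hence the delay within one pair is polynomial, and since only polynomially many pairs are processed---each in polynomial time even when it outputs nothing---the delay across the whole run stays polynomial.

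Two points remain. For completeness and non-repetition: when $m\geq 3$ the apex pair of $K_{2,m}$ and $K_{1,1,m}$ is the unique pair of non-degree-two vertices, so each such subgraph is produced from exactly one pair $\{u,v\}$ and one set $S$; the only members of the families in which every vertex has degree two are $K_{2,2}=C_4$ and $K_{1,1,1}=K_3$, both on the torus, where distinct apex choices would reproduce the same subgraph. I therefore exclude $|S|=2$ and $|S|=1$ from the torus apex enumeration and instead enumerate all edge-induced copies of $C_4$ and $K_3$ directly, as bounded-order sporadic subgraphs; the smallest Klein-bottle members $K_{2,1}=P_3$ and $K_{1,1,2}$ already have two vertices of degree different from two, so they cause no ambiguity and need no special treatment. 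The families never overlap one another or the sporadic list, since $K_{1,1,\ast}$ contains a triangle while $K_{2,\ast}$ is bipartite and the sporadic graphs are of fixed isomorphism type; thus no embedding is output twice.

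I expect the main obstacle to be the bookkeeping that guarantees a genuinely polynomial \emph{delay} rather than merely polynomial total time: one must argue that transitions between the quadratically many apex pairs, including pairs whose common-neighbour set is too small to yield any output, never create a super-polynomial gap between consecutive embeddings, and simultaneously that the symmetric degenerate subgraphs $C_4$ and $K_3$ are counted once and only once. The subset enumeration itself is routine once the Gray-code parity observation is in place.
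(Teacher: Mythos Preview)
Your approach is essentially the same as the paper's: both handle the bounded-order graphs $A_1,\dots ,A_6$, $K_{2,2,2}$, $K_3$, $K_{2,2}$ directly and then, for each apex pair $\{u,v\}$, enumerate the common neighbours and output the $K_{2,*}$ and $K_{1,1,*}$ subgraphs built from subsets of them. Your write-up is in fact more careful than the paper's---you make explicit the Gray-code step for polynomial delay within a pair, the exclusion of $|S|\le 2$ on the torus to avoid duplicating $C_4$ and $K_3$, and the argument that no subgraph is output twice---whereas the paper asserts the third step ``can be done in polynomial delay time'' without these details.
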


\begin{proof}
Let $G$ be a $3$-connected $3$-regular planar graph.
We enumerate inequivalent embeddings of $G$ on the torus and the Klein bottle simultaneously.
Like the proof of Theorem \ref{algo1},
we only have to find subgraphs isomorphic to 
$K_{2,m+1}$ or $K_{1,1,m}$ for any positive integer $m$, or one of the seven graphs $A_1,\cdots , A_6$ and $K_{2,2,2}$.
(Every time we find such a subgraph, output a embedding corresponding to it)

First, we find subgraphs isomorphic to one of the nine graphs $A_1,\cdots ,A_6$, $K_{1,1,1}=K_3, K_{2,2}$ and $K_{2,2,2}$ in $G^*$,
which can be done in polynomial time.
Second, for a pair of vertices $u$ and $v$ of $G^*$,
enumerate vertices adjacent to both of them.
Third, enumerate subgraphs isomorphic to $K_{2,m+2}$ or $K_{1,1,m+1}$
whose apex vertices are $u$ and $v$.
In such a subgraph,
all non-apex vertices are already enumerated in the second step.
Thus, the third step can be done in polynomial delay time.

To repeat the second and third step for any pair of vertices,
in the end, we have just enumerated all subgraphs isomorphic to $K_{2,m+2}$ or $K_{1,1,m+1}$ in $G^*$.
\end{proof}

We can calculate the total number of inequivalent embeddings of $G$ on each of the projective-plane, the torus and the Klein bottle in polynomial time
by a simple improvement of the above algorithms.

\begin{corollary}\label{coroalgo}
There is a polynomial time algorithm for counting the number of inequivalent embeddings of a $3$-connected $3$-regular planar graph on each of the projective-plane, the torus and the Klein bottle.
\end{corollary}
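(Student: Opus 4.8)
The plan is to express, for each of the three surfaces, the count as a sum of polynomially many terms, each of which is either a bounded search or a closed-form binomial sum, so that the exponentially large collections of embeddings on the torus and the Klein bottle are never listed explicitly. By Theorems \ref{th1}, \ref{th0} and \ref{th2}, counting inequivalent embeddings of $G$ is the same as counting the edge-subsets $X\subseteq E(G)$ for which $H_X$ is isomorphic to one of the prescribed target graphs, that is, counting edge-induced subgraphs of the triangulation $G^*$ isomorphic to these targets. For the projective-plane this is immediate and already follows from the enumeration algorithm of Theorem \ref{algo1}: the number of $K_2$-subgraphs is $|E(G^*)|$, while the number of $K_4$-subgraphs is at most $|V(G^*)|-3$ by Lemma \ref{lemup} and can be computed in polynomial time (for instance by examining all four-element vertex sets), so their sum is obtained in polynomial time.

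The torus and the Klein bottle are the cases where enumeration fails but counting still succeeds, and here I would exploit that each infinite family $K_{2,k}$ and $K_{1,1,k}$ is organized around an \emph{apex pair}: for $K_{2,k}$ this is the pair $\{u,v\}$ forming the part of size two (two nonadjacent vertices whose $k$ common neighbours form the rest of the subgraph), and for $K_{1,1,k}$ it is the pair of singleton-class vertices (an adjacent pair whose $k$ common neighbours form the rest). First I would iterate over all unordered pairs $\{u,v\}$ of vertices of $G^*$ and let $c(u,v)$ be the number of their common neighbours. A subgraph isomorphic to $K_{2,k}$ with apex pair $\{u,v\}$ then corresponds bijectively to a choice of $k$ common neighbours with the edge $uv$ excluded, and one isomorphic to $K_{1,1,k}$ corresponds to such a choice together with the edge $uv$, which requires $uv\in E(G^*)$. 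Summing $\binom{c(u,v)}{k}$ over the parities of $k$ dictated by the theorems yields, for $c=c(u,v)\ge 1$, the closed forms $\sum_{k\ \mathrm{odd}}\binom{c}{k}=2^{c-1}$ and $\sum_{k\ \mathrm{even},\,k\ge 2}\binom{c}{k}=2^{c-1}-1$, each computable with big-integer arithmetic on $O(n)$-bit numbers; since there are $O(n^2)$ pairs, the whole sum is obtained in polynomial time even though its value may be exponential.

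What remains is to add the bounded exceptional subgraphs and to correct the overcounting caused by the degenerate family members whose apex pair is not unique. The graphs $K_{2,2,2}$ and $A_1,\dots,A_6$ have a bounded number of vertices, so all edge-induced subgraphs isomorphic to them are counted by brute force in polynomial time; because the classification in Theorems \ref{th0} and \ref{th2} is by isomorphism type, these counts do not overlap one another or the family sums, and $K_{2,k}$ (bipartite) and $K_{1,1,k}$ (triangle-containing for $k\ge 1$) never coincide. On the torus the families are $K_{2,2m}$ and $K_{1,1,2m-1}$, whose smallest members $K_{2,2}=C_4$ and $K_{1,1,1}=K_3$ have automorphisms permuting their candidate apex pairs: each $C_4$ is counted twice (by a pair $\{u,v\}$ and by its pair of common neighbours) and each $K_3$ three times, so I would divide the $k=2$ and $k=1$ contributions by $2$ and $3$ respectively. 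On the Klein bottle the families are $K_{2,2m-1}$ and $K_{1,1,2m}$, whose smallest members $K_{2,1}=P_3$ and $K_{1,1,2}$ already have a unique apex pair, so no such correction is needed. Adding the corrected family sums to the bounded exceptional counts gives the total in polynomial time.

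The step I expect to be the main obstacle is exactly this bookkeeping: verifying that the map sending (apex pair, subset of common neighbours, optional edge $uv$) to an edge-induced subgraph is a bijection onto each isomorphism type, and certifying that the only overcounted types are the small symmetric graphs $C_4$ and $K_3$ on the torus, so that no copy is missed, no copy is double-counted across the two families, and none clashes with the bounded exceptional graphs. Once this is settled the running-time analysis is routine, since each of the $O(n^2)$ pairs contributes a single closed-form big integer and the exceptional graphs are of bounded size.
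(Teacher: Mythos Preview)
Your proposal is correct and follows essentially the same apex-pair strategy as the paper: both reduce the torus and Klein bottle counts to iterating over all $O(n^2)$ vertex pairs $\{u,v\}$ of $G^*$, computing the number $c(u,v)$ of common neighbours, and evaluating closed-form binomial sums, with the bounded-size exceptional graphs $K_{2,2,2}$ and $A_1,\dots,A_6$ handled by brute force. The only cosmetic difference is that the paper counts the degenerate family members $K_{2,2}$ and $K_3$ directly upfront (lumping them into $N_T$) and starts the family sum at $i\ge 3$, whereas you include them in the apex-pair sum and then correct by dividing the $k=2$ and $k=1$ contributions by their multiplicities $2$ and $3$; both routes give the same total.
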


\begin{proof}
The projective-planar case clearly holds.
We only have to count all the embeddings enumerated in the algorithm of Theorem \ref{algo1}.
Thus, we may consider embeddings on the torus and the Klein bottle.

Let $G$ be a $3$-connected $3$-regular planar graph.
On the basis of the algorithm in Theorem \ref{algo2},
we shall count subgraphs of $G^*$ isomorphic to
$K_{2,2,2},K_{2,2m}$ or $K_{1,1,2m-1}$,
which correspond to embeddings on the torus,
and count subgraphs isomorphic to $K_{2,2m-1}$ or $K_{1,1,2m}$, or one of the six graphs $A_1$ to $A_6$,
which correspond to embeddings on the Klein bottle.

Like the proof of Theorem \ref{algo2},
we first count the number of subgraphs of $G^*$ isomorphic to one of the three graphs $K_{2,2,2},K_{2,2}$ and $K_{1,1,1}=K_3$,
and denote it by $N_T$.
Similarly, we count the number of subgraphs isomorphic to one of the six graphs $A_1$ to $A_6$,
and denote it by $N_K$.

Second,
for the pair of vertices $u$ and $v$ in the proof of Theorem \ref{algo2},
assume that exactly $k$ vertices are adjacent to both $u$ and $v$.
Let $f_T(u,v)$ (resp. $f_K(u,v)$) be the number of subgraphs 
isomorphic to $K_{2,2m+2}$ or $K_{1,1,2m+1}$ (resp. $K_{2,2m-1}$ or $K_{1,1,2m}$) for any positive integer $m$
whose apex vertices are $u$ and $v$.
If $u$ and $v$ are adjacent to each other in $G^*$,
then we have
$$f_T(u,v)
=\sum_{i=3}^k \binom{k}{i}
=2^k-\frac{k(k-1)}{2}-k-1=2^k-\frac{k^2+k+2}{2}, $$
$$f_K(u,v)
=\sum_{i=1}^k \binom{k}{i}
=2^k-1.$$
Otherwise,
$$f_T(u,v)
=\sum_{i=2}^{\lfloor\frac{k}{2}\rfloor} \binom{k}{2i}
=2^{k-1}-\frac{k(k-1)}{2}-1=2^{k-1}-\frac{k^2-k+2}{2},$$
$$ f_K(u,v)
=\sum_{i=1}^{\lfloor\frac{k+1}{2}\rfloor} \binom{k}{2i-1}
=2^{k-1}.$$

Add the sum of $f_T(u,v)$ (resp. $f_K(u,v)$) taken over all pairs of vertices $u,v$ to $N_T$ (resp. $N_K$).
These are the total numbers of inequivalent embeddings of $G$ on the torus and the Klein bottle.
Thus, we can obtain this number in polynomial time.
\end{proof}

\section{Concluding remarks}

In this paper, we have shown the re-embedding structures of a $3$-connected $3$-regular planar graph $G$ on the projective-plane, the torus and the Klein bottle.
These structures enable us to count inequivalent embeddings of $G$ on each surface easily.

In order to extend our result to surfaces with higher genera,
we should show the complete lists of re-embedding structures of $G$ on these surfaces like Theorems \ref{thpro}, \ref{thtorus} and \ref{thKle}.
However, we think that
there are a large number of re-embedding types even on an orientable surface with genus $2$ or a nonorientable surface with genus $3$.
Then, it seems to be difficult to give such complete lists without additional assumptions.

\acknowledgements
\label{sec:ack}
The auther thanks anonymous referees for careful reading and their helpful comments which improved the quality of this paper.


\end{document}